\newcommand{\beq}{\begin{equation}}
\newcommand{\eeq}{\end{equation}}
\newtheorem{theorem}{Theorem}[section]
\newtheorem{lemma}[theorem]{Lemma}
\newtheorem{coroll}[theorem]{Corollary}
\newtheorem{prop}[theorem]{Proposition}
\newtheorem{definition}[theorem]{Definition}
\newtheorem{remark}[theorem]{Remark}
\newtheorem{ass}[theorem]{Assumption}
\newtheorem{example}[theorem]{Example}
\newcommand*{\lnr}{\left|\mkern-1mu\left|\mkern-1mu\left|}
\newcommand*{\rnr}{\right|\mkern-1mu\right|\mkern-1mu\right|}
\newcommand{\msc}[1]{\textbf{MSC2010 Classification:} #1.}
\newcommand{\keywords}[1]{\textbf{Key words:} #1.}
\newcommand{\ackn}[1]{\textbf{Acknowledgments:} #1.}
\def\theequation{\arabic{section}.\arabic{equation}}
\begin{document}
\title{\textbf{Optimal stopping of a Hilbert space valued diffusion:\\ an infinite dimensional variational inequality}\thanks{These results extend a portion of the second Author PhD dissertation \cite{PhD-T} under the supervision of the first Author. Both Authors wish to thank Franco Flandoli  and Claudio Saccon for their helpful comments and suggestions.}}
\author{Maria B. Chiarolla\thanks{Dipartimento di Metodi e Modelli per l'Economia, il Territorio e la Finanza (MEMOTEF), Universit\`a di Roma `La Sapienza', via del Castro Laurenziano 9, 00161 Roma, Italy; \texttt{maria.chiarolla@uniroma1.it}}\:\:\:\:\:\:and\:\:\:\:\:Tiziano De Angelis\thanks{Corresponding author.~School of Mathematics, University of Manchester, Oxford Rd.~M13 9PL Manchester (UK); \texttt{tiziano.deangelis@manchester.ac.uk}
}
}
\maketitle
\begin{abstract}
A finite horizon optimal stopping problem for an infinite dimensional diffusion $X$ is analyzed by means of variational techniques. The diffusion is driven by a SDE on a Hilbert space $\mathcal{H}$ with a non-linear diffusion coefficient $\sigma(X)$ and a generic unbounded operator $A$ in the drift term. When the gain function $\Theta$ is time-dependent and fulfils mild regularity assumptions, the value function $\mathcal{U}$ of the optimal stopping problem is shown to solve an infinite-dimensional, parabolic, degenerate variational inequality on an unbounded domain. Once the coefficient $\sigma(X)$ is specified, the solution of the variational problem is found in a suitable Banach space $\mathcal{V}$ fully characterized in terms of a Gaussian measure $\mu$.

This work provides the infinite-dimensional counterpart, in the spirit of Bensoussan and Lions \cite{Ben-Lio82}, of well-known results on optimal stopping theory and variational inequalities in $\mathbb{R}^n$. These results may be useful in several fields, as in mathematical finance when pricing American options in the HJM model.
\end{abstract}
\msc{60G40, 49J40, 35R15}
\vspace{+8pt}

\noindent\keywords{optimal stopping, infinite-dimensional stochastic analysis, parabolic partial dif\-fe\-ren\-tial equations, degenerate variational inequalities}

\section{Introduction}

This paper studies a finite horizon optimal stopping problem associated to an infinite-dimensional diffusion process by means of variational techniques. It is well known that the value function of a wide class of optimal stopping problems for general diffusions in $\mathbb{R}^n$ may be characterized as the solution of suitable variational problems (see \cite{Ben-Lio82} and references therein for a survey). Here we provide an infinite-dimensional counterpart of those results by extending methods employed in \cite{Ben-Lio82} and combining them with techniques borrowed from the theory of infinite dimensional SDEs.

This work is partially motivated by a central problem in the modern theory of mathematical finance. In fact, pricing American bond options on the forward interest rate curve gives rise to an infinite dimensional optimal stopping problem. This is a consequence of the dependence of the bond's price on the whole structure of the forward curve. The results obtained here will be extended to solve that particular financial problem in a forthcoming paper \cite{Ch-DeA12b}.

Optimal stopping for processes in locally compact spaces has attracted great attention in the last decades (cf.~\cite{ElKar81}, \cite{Shir}, \cite{Zab84} among others) while the case of general infinite-dimensional Markov processes has been studied in relatively few papers. The earliest paper on infinite dimensional optimal stopping and variational inequalities we are aware of is \cite{Ch-Men89}. There Chow and Menaldi extended known finite dimensional results, in the spirit of \cite{Ben-Lio82}, to the case of a particular infinite dimensional linear diffusion.

A first attempt towards a more comprehensive study of optimal stopping theory for processes taking values in a Polish space was made by J. Zabczyk  \cite{Zab97} in 1997 from a purely probabilistic point of view and later on, in 2001, by variational methods \cite{Zab01}. Recently Barbu and Marinelli \cite{Bar-Mar08} contributed further insights in this direction adopting arguments similar to those in Zabczyk's works. In both \cite{Bar-Mar08} and \cite{Zab01} the Authors considered a diffusion process on a functional space $\mathcal{H}$ and solved the variational problem in \emph{mild sense} in a suitable $L^2$-space with respect to a measure on $\mathcal{H}$. Instead in the present work we find Sobolev-type solutions (therefore local) of the variational problem. 
Barbu and Sritharan \cite{Bar-Sri06} also considered an optimal stopping problem for a 2-dimensional stochastic Navier-Stokes equation and solved the associated infinite dimensional variational inequality in a $L^2$-space.

A different approach is based on viscosity theory. It is extensively exploited to solve general stochastic control problems (cf.~\cite{Fl-Son} for a survey) and the infinite-dimensional case is currently the object of intense study (cf.~\cite{Lions89-1}, \cite{Lions89-2}, \cite{Lions89-3}, \cite{Sw94} among others). However, as far as we know, the only paper on infinite-dimensional variational inequalities related to optimal stopping problems studied by viscosity methods is \cite{Gat-Sw99} by D.\ G\c{a}tarek and A.\ \'Swi\c{e}ch. The Authors deal with a problem arising in finance. They characterize the value function of the optimal stopping problem when the underlying diffusion has a particular form not involving the unbounded term that normally appears in infinite-dimensional stochastic differential equations (cf.~\cite{DaPr-Zab} for a survey).

It is worth mentioning that attempts to provide some numerical results for this class of problems were  recently made in 
\cite{Marc08}. However, arguments therein are mostly heuristic, proofs are only sketched and some of them seem incorrect.

In the present paper the underlying process $X$ lives in a general Hilbert space $\mathcal{H}$ and it is governed by the SDE \eqref{SDE-infty} with a generic unbounded operator $A$ (which is not even required to be self-adjoint) and with diffusion coefficient $\sigma$ in a class of functions which depends on $A$ through Assumptions \ref{ass-Q} and \ref{ass-sigma} (see below the discussion after Remark \ref{suffregtheta}). Under mild regularity assumptions on the gain function $\Theta$, the value function $\mathcal{U}$ of the corresponding optimal stopping problem (see \eqref{OS1}) solves an infinite dimensional variational inequality that is parabolic and highly degenerate on an unbounded domain. We point out that degenerate variational inequalities represent non-standard problems in the context of PDE theory even at the finite dimensional level (cf.\ \cite{Str-Var72}). For the associated optimal stopping problems one may consult the work of J.L.\ Menaldi \cite{Men80}, \cite{Men80bis}. In our case we show that $\mathcal{U}$ solves a variational inequality in a specific Sobolev-type space $\mathcal{V}$ (cf.\ \eqref{def-sp-V}) under a given centered Gaussian measure $\mu$ (cf.\ \eqref{gaussmeas}). We also obtain uniqueness at least in a special case under more restrictive assumptions on $X$ (see Section \ref{uniqueness}).

This work is ideally the extension of \cite{Ch-Men89} to general diffusions in Hilbert spaces and it provides the infinite dimensional analogue of the results in \cite{Men80}, \cite{Men80bis}. {{Differently to \cite{Ch-Men89} we consider a finite time-horizon and a SDE with a generic non-linear diffusion coefficient. The problem in \cite{Ch-Men89} is analyzed as a special case of our study and two open questions raised in \cite{Ch-Men89} find positive answers in our Section \ref{uniqueness}.}}

The paper is organized as follows. In Section \ref{setting} we set the problem and we make the main regularity assumptions on the diffusion $X$ and on the gain function $\Theta$. Then we obtain regularity of the value function $\mathcal{U}$. Section \ref{sec-appr} deals with the approximation of the SDE \eqref{SDE-infty} and of the optimal stopping problem \eqref{OS1}. The SDE is approximated in two steps: first the unbounded term $A$ is replaced by its Yosida approximation $A_\alpha$, $\alpha>0$, and afterwards a $n$-dimensional reduction of the SDE is obtained. In this approximation procedure the corresponding process $X^{(\alpha);n}$ gives rise to an optimal stopping problem whose value function we denote by $\mathcal{U}^{(n)}_\alpha$. By means of purely probabilistic arguments we show that $\mathcal{U}^{(n)}_\alpha$ converges to the value function $\mathcal{U}$ of the original optimal stopping problem for $n\to\infty$ and $\alpha\to\infty$. The variational problem is studied in Section \ref{sec-fdvi}. Initially we prove that the value function $\mathcal{U}_\alpha^{(n)}$ is solution of a suitable variational inequality in $\mathbb{R}^n$ and we characterize an optimal stopping time. We also provide a number of important bounds on $\mathcal{U}_\alpha^{(n)}$, its time derivative and its gradient, by means of penalization methods. Section \ref{sec-infty} is entirely devoted to prove that our original value function $\mathcal{U}$ solves a suitable infinite-dimensional variational problem. The result is obtained by taking the limit as $n\to\infty$ and $\alpha\to\infty$ of the variational problem detailed in Sections \ref{sec:fd01} and \ref{sec:fd02}. Both analytical and probabilistic tools are adopted to carry out the proofs and to characterize an optimal stopping time. In Section \ref{uniqueness} uniqueness of the solution to the variational problem is proved for a specific class of diffusion processes.
The paper is completed by a technical Appendix containing some proofs.


\section{Setting and preliminary estimates}\label{setting}
Let $\mathcal{H}$ be a separable Hilbert space with scalar product $\langle\,\cdot\,,\,\cdot\,\rangle_{\mathcal{H}}$ and induced norm $\|\cdot\|_{\mathcal{H}}$. Let  $A:D(A)\subset\mathcal{H}\to\mathcal{H}$ be the infinitesimal generator of a strongly continuous semigroup of operators $\{S(t),t\geq0\}$ on $\mathcal{H}$ (cf.\ \cite{Pazy}), where $D(A)$ denotes its domain. Notice that $D(A)$ is dense in $\mathcal{H}$. Let $\{\varphi_1,\varphi_2,\ldots\}$ be an orthonormal basis of $\mathcal{H}$ with $\varphi_i\in D(A)$, $i=1,2,\ldots$.

We now consider a stochastic framework. Let $(\Omega,\mathcal{F},\mathbb{P})$ be a complete probability space and let $W:=(W^0,W^1,W^2,\ldots)$ be a sequence of independent, real, standard Brownian motions on it. The filtration generated by $W$ is $\{\mathcal{F}_t,t\geq0\}$ and it is completed by the null sets. Fix a finite horizon $T>0$ and take a continuous map $\sigma:\mathcal{H}\to\mathcal{H}$ whose regularity will be specified later in this section (cf. Assumption \ref{ass-sigma}). Consider the stochastic differential equation (SDE)
\begin{equation}\label{SDE-infty}
\left\{
\begin{array}{l}
dX_t=AX_tdt+\sigma(X_t)dW^0_t,\qquad t\in[0,T],\\
\\
X_0=x,
\end{array}
\right.
\end{equation}
in $\mathcal{H}$. We denote by $X^x$ a mild solution of \eqref{SDE-infty}. When the starting time is $t$ rather than zero the solution is denoted by $X^{t,x}$.
To simplify exposition we have chosen an SDE driven by a 1-dimensional Brownian motion, however our results may be also extended to $\mathcal{H}$-valued Brownian motions with trace-class covariance operator. In this paper we will rely on the infinite sequence of Brownian motions $W$ to find finite dimensional approximations of $X^x$ driven by a SDEs similar to \eqref{SDE-infty} but with Brownian motion $\overline{W}^{(n)}:=(W^0,\ldots,W^n)^\intercal$ instead of $W^0$.

We aim to study the infinite dimensional optimal stopping problem
\begin{equation}\label{OS1}
\mathcal{U}(t,x):=\sup_{t\leq\tau\leq T}\mathbb{E}\left\{\Theta(\tau,X^{t,x}_\tau)\right\},
\end{equation}
with $\tau$ a stopping time with respect to the filtration $\{\mathcal{F}_t,t\in[0,T]\}$ and with gain function $\Theta:[0,T]\times\mathcal{H}\to\mathbb{R}$ such that $\Theta\ge0$ and $(t,x)\mapsto\Theta(t,x)$ continuous. Although infinite-dimensional optimal stopping problems like \eqref{OS1} have been proposed by several Authors (see, for example, \cite{Bar-Mar08}, \cite{Ch-Men89}, \cite{Gat-Sw99}, \cite{Zab97}, \cite{Zab01}), here we provide an alternative method to characterize 
the value function $\mathcal{U}$. Our results might be extended to the case of a discounted gain function, if the discount factor is a Lipschitz-continuous, non-negative function of $X$. In order to work out problem \eqref{OS1} we need to specify some properties of $\Theta$, $\sigma$ and $A$. For that we introduce suitable Gauss-Sobolev spaces.

Define the positive, linear operator $Q:\mathcal{H}\to\mathcal{H}$ by
\begin{equation}\label{bari}
Q\varphi_i=\lambda_i\varphi_i, \qquad\lambda_i>0,\qquad i=1,2,\ldots,
\end{equation}
with $\sum^\infty_{i=1}{\lambda_i}<\infty$; i.e., $Q$ is of trace class. Consider the centered Gaussian measure $\mu$ with covariance operator $Q$ (cf. \cite{Bogachev}, \cite{DaPr}, \cite{DaPr-Zab04}); that is, the restriction to the vectors\footnote{$\ell_2$ denotes the set of infinite vectors $x:=(x_1,x_2,\ldots)$ such that $\sum_{k}{x_k^2}<+\infty$.} $x\in\ell_2$ of the infinite product measure
\begin{eqnarray}\label{gaussmeas}
\mu(dx)=\prod^\infty_{i=1}{\frac{1}{\sqrt{2\pi\lambda_i}}e^{-\frac{x^2_i}{2\lambda_i}}dx_i}.
\end{eqnarray}
For $1\leq p<+\infty$ and $f:\mathcal{H}\to\mathbb{R}$, define the $L^p(\mathcal{H},\mu)$ norm as
\begin{align}\label{Lp-n1}
&\|f\|_{L^p({\mathcal{H},\mu})}:=\left(\int_{\mathcal{H}}{|f(x)|^p\mu(dx)}\right)^{\frac{1}{p}}.
\end{align}
Then, with the notation of \cite{DaPr}, Chapter 10, we consider derivatives in the Friedrichs sense; that is,
\begin{align}\label{Dfried}
D_k\,f(x):=\lim_{\varepsilon\to 0}\frac{1}{\varepsilon}\left[f(x+\varepsilon\varphi_k)-f(x)\right],\qquad x\in\mathcal{H},\,k\in\mathbb{N},
\end{align}
when the limit exists.

{If $f$ belongs to the domain of the gradient operator $D$ and $\mathcal{H}$ is identified with its dual, then the $L^p(\mathcal{H},\mu;\mathcal{H})$ norm of $Df=\left(D_1f,\,D_2f,\,\ldots\right)$ is defined as
\begin{align}
&\|Df\|_{L^p({\mathcal{H},\mu};\mathcal{H})}:=\left(\int_{\mathcal{H}}{\|Df(x)\|^p_{\mathcal{H}}\,\mu(dx)}\right)
^{\frac{1}{p}}\qquad\text{for $1\le p<+\infty$}\label{Lp-n2}
\end{align}
where
\begin{align}\label{Dfried02}
\big\|Df(x)\big\|_{\mathcal{H}}=\Big(\sum_{k}\big|D_kf(x)\big|^2\Big)^{\frac{1}{2}}<+\infty.
\end{align}}
One can show that $D$ is closable in $L^p(\mathcal{H},\mu)$ (cf.\ \cite{DaPr}, Chapter 10). Let $\overline{D}$ denote the closure of $D$ in $L^p(\mathcal{H},\mu)$ and define the Sobolev space
\begin{eqnarray}\label{def-W1p}
W^{1,p}(\mathcal{H},\mu):=\{f: f\in L^p(\mathcal{H},\mu)\,\text{and}\,\overline{D}f\in L^p(\mathcal{H},\mu;\mathcal{H})\}.
\end{eqnarray}
Notice however that in the case of generalized derivatives $D$ and $\overline{D}$ are the same.

For $n\in\mathbb{N}$ the finite dimensional counterpart of $\mu$, $L^p({\mathcal{H},\mu})$, $L^p(\mathcal{H},\mu;\mathcal{H})$ are, respectively,
\begin{equation*}
\mu_n(dx):=\prod^n_{i=1}{\frac{1}{\sqrt{2\pi\lambda_i}}e^{-\frac{x^2_i}{2\lambda_i}}dx_i},\quad L^p(\mathbb{R}^n,\mu_n),\quad L^p(\mathbb{R}^n,\mu_n;\mathbb{R}^n).
\end{equation*}
\begin{remark}\label{conn}
If $f:\mathbb{R}^n\to\mathbb{R}$, then
\begin{align*}
\|f\|_{L^p({\mathcal{H},\mu})}=\left(\int_{\mathcal{H}}{|f(x)|^p\mu(dx)}\right)^{\frac{1}{p}}=\left(
\int_{\mathbb{R}^n}{|f(x)|^p\mu_n(dx)}\right)^{\frac{1}{p}}=:\|f\|_{L^p({\mathbb{R}^n,\mu_n})}
\end{align*}
and
\begin{align*}
\|Df\|_{L^p({\mathcal{H},\mu};\mathcal{H})}=\left(\int_{\mathcal{H}}{\|Df(x)\|^p_{\mathcal{H}}\,\mu(dx)}\right)
^{\frac{1}{p}}=\left(\int_{\mathbb{R}^n}{\|Df(x)\|^p_{\mathbb{R}^n}\,\mu_n(dx)}\right)^{\frac{1}{p}}=:
\|Df\|_{L^p({\mathbb{R}^n,\mu_n};\mathbb{R}^n)}\,.
\end{align*}
\end{remark}

Again as in \cite{DaPr}, Chapter 10, we define
\begin{align}\label{Dfried03}
D_kD_jf(x):=\lim_{\varepsilon\to0}\frac{1}{\varepsilon}\left[D_jf(x+\varepsilon\varphi_k)-D_jf(x)\right],\qquad x\in\mathcal{H},\,k\in\mathbb{N},
\end{align}
when the limit exists. {For functions $f$ in the domain of $D^2$ one has} $D^2f:\mathcal{H}\to \mathcal{L}(\mathcal{H})$ where $\mathcal{L}(\mathcal{H})$ denotes the space of linear operators on $\mathcal{H}$. In this paper we do not need an $L^p$-space associated to the second derivative.

At this point we can go back to our optimal stopping problem \eqref{OS1} and make the following regularity assumptions on the gain function $\Theta$.
\begin{ass}\label{ass-psi}
There exist positive constants $\overline{\Theta}$, $L_\Theta$, $L^\prime_\Theta$ such that
\begin{align}
&0\le\Theta(t,x)\le\overline{\Theta}\:\:\text{on $[0,T]\times\mathcal{H}$},\label{psi1}\\
\nonumber\\
&(t,x)\mapsto D\Theta(t,x)\:\text{is continuous and}\:\: \|D\Theta(t,x)\|_{\mathcal{H}}\leq L_{\Theta}\qquad t\in[0,T],\,x\in\mathcal{H},\label{psi2}\\
\nonumber\\
&(t,x)\mapsto \frac{\partial\Theta}{\partial t}(t,x)\:\text{is continuous and}\:\: \Big|\frac{\partial\Theta}{\partial t}(t,x)\Big|\leq L^\prime_{\Theta} \qquad t\in[0,T],\,x\in\mathcal{H}.\label{psi3}
\end{align}
Also, $(t,x)\mapsto D^2\Theta(t,x)$ is continuous and $\sup_{(t,x)\in[0,T]\times\mathcal{H}}\big\|D^2\Theta(t,x)\big\|_{L}<+\infty$ with $\|\,\cdot\,\|_L$ the norm in $\mathcal{L}(\mathcal{H})$.
\end{ass}
Obviously Assumption \ref{ass-psi} implies
\begin{equation}
\sup_{t\in[0,T]}\|\Theta(t)\|_{W^{1,p}(\mathcal{H},\mu)}<C_\Theta\quad\text{and}\quad\int_0^T{\bigg\|\frac{\partial\Theta}{\partial t}(t)\bigg\|^2_{L^2(\mathcal{H},\mu)}dt}<C_{\Theta}\,.
\end{equation}
for some positive constant $C_\Theta$ and all $1\le p<+\infty$. In what follows condition \eqref{psi2} will be often referred to as Lipschitz property of the gain function $\Theta$.

\begin{remark}\label{suffregtheta}
Notice that for existence results of the variational problem \eqref{d-2tris} associated to the optimal stopping one \eqref{OS1}, we could assume
\begin{align}\label{condth01}
\left\{
\begin{array}{l}
\vspace{+10pt}
\text{$\Theta\ge0$, $(t,x)\mapsto\Theta(t,x)$ continuous on $[0,T]\times\mathcal{H}$,}\\
\vspace{+10pt}
\text{$\big|\Theta(t,x)\big|\le C\big(1+\|x\|^p_\mathcal{H}\big)$ on $[0,T]\times\mathcal{H}$ for some $1\le p<+\infty$ and $C>0$,}\\
\vspace{+10pt}
\sup_{0\le t\le T}\big|\Theta(t,x)-\Theta(t,y)\big|\le L_\Theta\big\|x-y\big\|_{\mathcal{H}}\quad\text{for $L_\Theta>0$, $x,y\in\mathcal{H}$,} \\
\frac{\partial\Theta}{\partial t}\in L^2(0,T;L^2(\mathcal{H},\mu)).
\end{array}
\right.
\end{align}
However, such $\Theta$ may be approximated by regular ones satisfying Assumption \ref{ass-psi}, for example exponential functions as in \cite{DaPr} or cylindrical ones as in \cite{Bogachev} or \cite{Ma-Rock}.
\end{remark}
The dynamics \eqref{SDE-infty} is fully specified in terms of $A$ and $\sigma$. In applications of infinite dimensional SDEs the choice of the unbounded operator $A$ is often distinctive of the phenomenon that one wants to describe (for example it may involve the Laplacian in Navier-Stokes equations or the first derivative in delay equations), whereas multiple choices of the diffusion coefficient are possible in several situations (see for instance various versions of Musiela's model for interest rates). In our setting we allow for a very general operator $A$ at the cost of restricting the class of admissible diffusion coefficients $\sigma$. In fact, given $A$ and denoted by $A^*$ its adjoint operator we construct $Q$ verifying the following
\begin{ass}\label{ass-Q}
The covariance operator $Q$ of \eqref{bari} is such that
\begin{equation}\label{tecn}
Tr\left[A\,Q\,A^*\right]<\infty.
\end{equation}
\end{ass}
\noindent The above condition is needed in Section \ref{sec-fdvi}, however such $Q$ always exists. Indeed given an orthonormal basis $(\varphi_j)_{j\in\mathbb{N}}\subset D(A)$ of $\mathcal{H}$, the operator $Q$ is constructed by picking its eigenvalues $(\lambda_i)_{i\in\mathbb{N}}$ so that $\sum_{j=1}^\infty\lambda_j\big\|A\varphi_j\big\|^2_\mathcal{H}<+\infty$, which is equivalent to say that \eqref{tecn} holds. Once $Q$ is constructed the class of diffusion coefficients $\sigma$ is determined by
\begin{ass}\label{ass-sigma}
The diffusion coefficient of \eqref{SDE-infty} is such that
\begin{align*}
\left\{
\begin{array}{l}
\text{$(1)$ $\sigma(x)\in Q(\mathcal{H})$, $\forall x\in\mathcal{H}$\, (i.e., $\sigma(x)= Q\gamma(x)$ for some $\gamma:\mathcal{H}\to\mathcal{H}$);}\\
\\
\text{$(2)$ $\gamma$ and $D\gamma$ are bounded and continuous on $\mathcal{H}$ (cf.~\eqref{Dfried} and \eqref{Dfried02}).}
\end{array}\right.
\end{align*}
\end{ass}
\noindent Clearly $(1)$ includes $\sigma$ state dependent.

\begin{remark}
Assumption \ref{ass-sigma} is redundant in the case of constant diffusion coefficients. In fact for any unbounded operator $A$ and any constant $\sigma\in D(A)$ one can pick an orthonormal basis $(\varphi_j)_{j\in\mathbb{N}}$ with $\varphi_1:= \sigma/\|\sigma\|_\mathcal{H}$ and construct $Q$ as in \eqref{bari} with $\lambda_1=1$.
\end{remark}

\begin{example} \emph{In a version of the Musiela model $\mathcal{H}=L^2_\alpha(\mathbb{R}_+)$ is an $L^2$-space with exponential weight $e^{-\alpha x}$. An orthonormal basis $(\varphi_j)_{j\in\mathbb{N}}$ may be constructed from polynomials by using Graham-Schmidt method, and the unbounded operator is $(Af)(x) =f'(x) $ for $f\in D(A) $. The norm $p_j:=\|A\varphi_j\|_\mathcal{H}$ is well defined and finite for all $j\in\mathbb{N}$, hence it is enough to take $\lambda_j:=1/(j \,p_j)^2$ for all $j\ge j_0$ for some $j_0\in\mathbb{N}$, and $\gamma$ according to $(2)$ of Assumption \ref{ass-sigma}.}
\end{example}

\begin{remark}
The second condition in Assumption \ref{ass-sigma} may be substantially relaxed throughout the paper by considering $\gamma$ Lipschitz continuous with sublinear growth, however for simplicity we will not do so.
\end{remark}
Under Assumption \ref{ass-sigma} we have existence and uniqueness of a mild solution $X^x$ to (\ref{SDE-infty}) (cf. \cite{DaPr-Zab}).
From now on and unless otherwise specified (see Section \ref{uniqueness}) we will take Assumptions \ref{ass-psi} and \ref{ass-sigma} as standing assumptions.

Below we obtain some preliminary estimates and some regularity properties of the value function $\mathcal{U}$.
\begin{lemma}\label{lipSDE}
Let $X^x$ and $X^y$ be the mild solutions of (\ref{SDE-infty}) starting at $x$ and $y$, respectively. Then
\begin{eqnarray}
&&\mathbb{E}\left\{\sup_{0\leq t\leq T}\|X^x_t\|^p_{\mathcal{H}}\right\}\leq C_{p,T}(1+\|x\|^p_{\mathcal{H}})\quad1\leq p<\infty,\label{apr1}\\
&&\mathbb{E}\left\{\sup_{0\leq t\leq T}\|X^x_t-X^y_t\|^p_{\mathcal{H}}\right\}\leq C_{p,T}\|x-y\|^p_{\mathcal{H}}\quad1\leq p<\infty,\label{lip}
\end{eqnarray}
where the positive constant $C_{p,T}$ depends only on $p$ and $T$.
\end{lemma}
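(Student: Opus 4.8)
The plan is to establish both estimates by the standard fixed-point / Gronwall machinery for mild solutions, exploiting Assumption \ref{ass-sigma} which guarantees $\sigma$ is Lipschitz and of linear growth (indeed $\sigma = Q\gamma$ with $D\gamma$ bounded, so $\gamma$ is Lipschitz, and since $Q$ is bounded so is $\sigma$; in fact $\sigma$ is bounded because $\gamma$ is Lipschitz only gives linear growth, but composing with the bounded operator $Q$ still yields a Lipschitz map with at most linear growth, which is all we need). Write the mild solution as
\[
X^x_t = S(t)x + \int_0^t S(t-s)\sigma(X^x_s)\,dW^0_s,
\]
and recall that $\|S(t)\|_{\mathcal{L}(\mathcal{H})}\le M e^{\omega t}\le M_T$ on $[0,T]$ for suitable constants, since $\{S(t)\}$ is strongly continuous.

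For \eqref{apr1}, I would take $p$-th moments, use the elementary inequality $\|a+b\|^p\le 2^{p-1}(\|a\|^p+\|b\|^p)$, bound the deterministic term by $M_T^p\|x\|_{\mathcal{H}}^p$, and control the stochastic convolution by the factorization method (Da Prato–Kwapie\'n–Zabczyk) or directly by a Burkholder–Davis–Gundy inequality for stochastic convolutions: for $p\ge 2$,
\[
\mathbb{E}\Big\{\sup_{0\le t\le T}\Big\|\int_0^t S(t-s)\sigma(X^x_s)\,dW^0_s\Big\|_{\mathcal{H}}^p\Big\}
\le C_{p,T}\,\mathbb{E}\int_0^T\|\sigma(X^x_s)\|_{\mathcal{H}}^p\,ds
\le C_{p,T}'\Big(1+\mathbb{E}\int_0^T\|X^x_s\|_{\mathcal{H}}^p\,ds\Big),
\]
using the linear growth of $\sigma$. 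Setting $g(t):=\mathbb{E}\{\sup_{0\le r\le t}\|X^x_r\|_{\mathcal{H}}^p\}$ (first truncating with a stopping time $\tau_N=\inf\{t:\|X^x_t\|_{\mathcal{H}}>N\}$ to make everything finite, then letting $N\to\infty$ by monotone convergence), one gets $g(t)\le C(1+\|x\|_{\mathcal{H}}^p) + C\int_0^t g(s)\,ds$, and Gronwall's lemma closes the argument. The case $1\le p<2$ follows from the case $p=2$ by Jensen / H\"older on the finite measure space $(\Omega,\mathbb P)$.

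For \eqref{lip}, subtract the two mild representations:
\[
X^x_t - X^y_t = S(t)(x-y) + \int_0^t S(t-s)\big(\sigma(X^x_s)-\sigma(X^y_s)\big)\,dW^0_s,
\]
take $p$-th moments and the supremum, bound the first term by $M_T^p\|x-y\|_{\mathcal{H}}^p$, and use BDG together with the Lipschitz property $\|\sigma(u)-\sigma(v)\|_{\mathcal{H}}\le L_\sigma\|u-v\|_{\mathcal{H}}$ (with $L_\sigma = \|Q\|\cdot\mathrm{Lip}(\gamma)$, and $\mathrm{Lip}(\gamma)\le\sup_x\|D\gamma(x)\|$) to obtain, with $h(t):=\mathbb{E}\{\sup_{0\le r\le t}\|X^x_r-X^y_r\|_{\mathcal{H}}^p\}$,
\[
h(t)\le C\|x-y\|_{\mathcal{H}}^p + C\int_0^t h(s)\,ds,
\]
so Gronwall again gives $h(T)\le C_{p,T}\|x-y\|_{\mathcal{H}}^p$; the sub-quadratic range of $p$ is handled as before.

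The only genuinely delicate point is the maximal inequality for the stochastic convolution: a na\"ive It\^o isometry controls $\mathbb{E}\|\int_0^t S(t-s)\sigma(X_s)dW^0_s\|_{\mathcal{H}}^2$ but not the supremum over $t$, because $\int_0^\cdot S(\cdot-s)\,dW^0_s$ is generally not a martingale when $S$ is merely a $C_0$-semigroup. I would resolve this via the factorization lemma: pick $\beta\in(1/p,1/2)$, write $S(t-s) = \frac{\sin\pi\beta}{\pi}\int_s^t (t-r)^{\beta-1}(r-s)^{-\beta}S(t-r)S(r-s)\,dr$, so that the convolution becomes $\frac{\sin\pi\beta}{\pi}\int_0^t (t-r)^{\beta-1}S(t-r)Y_r\,dr$ with $Y_r:=\int_0^r(r-s)^{-\beta}S(r-s)\sigma(X_s)\,dW^0_s$; then $\mathbb{E}\int_0^T\|Y_r\|_{\mathcal{H}}^p\,dr<\infty$ follows from the It\^o isometry and $\int_0^r(r-s)^{-2\beta}ds<\infty$, and the operator $r\mapsto\int_0^\cdot(\cdot-r)^{\beta-1}S(\cdot-r)Y_r\,dr$ is bounded from $L^p(0,T;\mathcal{H})$ into $C([0,T];\mathcal{H})$ by Young's inequality since $\beta p>1$. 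This is entirely standard (cf. \cite{DaPr-Zab}), so in the write-up I would simply invoke it; everything else is routine Gronwall bookkeeping.
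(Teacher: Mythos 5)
Your proof is correct and is essentially the argument the paper relies on: the paper's own proof simply cites \cite{DaPr-Zab}, Theorems 7.4 and 9.1, together with Jensen's inequality, and those theorems are established by exactly the factorization-plus-Gronwall scheme you write out (with the Lipschitz/linear-growth properties of $\sigma=Q\gamma$ supplied by Assumption \ref{ass-sigma}). The only cosmetic point is that the factorization window $\beta\in(1/p,1/2)$ is empty at $p=2$, so one should first establish the estimates for some $p>2$ and then descend to all $1\le p\le 2$ by Jensen on $(\Omega,\mathbb{P})$ --- which is precisely the ``simple application of Jensen's inequality'' the paper mentions.
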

\begin{proof}
The proof of \eqref{apr1} follows from \cite{DaPr-Zab}, Theorem 7.4, whereas the proof of \eqref{lip} is a consequence of \cite{DaPr-Zab}, Theorem 9.1 and a simple application of Jensen's inequality.
\end{proof}

\begin{prop}\label{reg-v-fun}
The value function $\mathcal{U}(t,x)$ is non-negative, uniformly bounded with the same upper bound of $\Theta$, i.e.
\begin{equation}\label{bdd-V}
\sup_{(t,x)\in[0,T]\times\mathcal{H}}\mathcal{U}(t,x)\leq\overline{\Theta}.
\end{equation}
Moreover, there exists $L_\mathcal{U}>0$ such that
\begin{equation}\label{lip-V}
|\mathcal{U}(t,x)-\mathcal{U}(t,y)|\leq L_\mathcal{U}\|x-y\|_{\mathcal{H}},\qquad t\in[0,T],\,x,y\in\mathcal{H}.
\end{equation}
\end{prop}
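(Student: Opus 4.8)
The plan is to derive each property of $\mathcal{U}$ directly from the definition \eqref{OS1} and the corresponding properties of $\Theta$ together with the estimates in Lemma \ref{lipSDE}. Non-negativity is immediate: since $\Theta\ge0$ by Assumption \ref{ass-psi}, every expectation $\mathbb{E}\{\Theta(\tau,X^{t,x}_\tau)\}$ is non-negative, and hence so is the supremum. For the upper bound \eqref{bdd-V}, I would use \eqref{psi1} to get $\Theta(\tau,X^{t,x}_\tau)\le\overline{\Theta}$ pointwise on $\Omega$, so that $\mathbb{E}\{\Theta(\tau,X^{t,x}_\tau)\}\le\overline{\Theta}$ for every admissible stopping time $\tau$; taking the supremum over $\tau$ preserves the inequality and yields $\mathcal{U}(t,x)\le\overline{\Theta}$ uniformly in $(t,x)$.

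The Lipschitz estimate \eqref{lip-V} is the substantive part. The standard argument is: fix $t\in[0,T]$ and $x,y\in\mathcal{H}$, and for an arbitrary stopping time $\tau$ with $t\le\tau\le T$ estimate
\begin{equation*}
\big|\mathbb{E}\{\Theta(\tau,X^{t,x}_\tau)\}-\mathbb{E}\{\Theta(\tau,X^{t,y}_\tau)\}\big|\le\mathbb{E}\big\{\big|\Theta(\tau,X^{t,x}_\tau)-\Theta(\tau,X^{t,y}_\tau)\big|\big\}.
\end{equation*}
Here I would invoke the Lipschitz property of $\Theta$ in the space variable, which follows from \eqref{psi2} (the bound $\|D\Theta(t,x)\|_{\mathcal{H}}\le L_\Theta$ gives $|\Theta(t,x)-\Theta(t,x')|\le L_\Theta\|x-x'\|_{\mathcal{H}}$ uniformly in $t$, by integrating the Friedrichs derivative along the segment joining $x$ and $x'$). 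Thus the right-hand side is bounded by $L_\Theta\,\mathbb{E}\{\|X^{t,x}_\tau-X^{t,y}_\tau\|_{\mathcal{H}}\}$, and since $\tau\le T$ this is at most $L_\Theta\,\mathbb{E}\{\sup_{t\le s\le T}\|X^{t,x}_s-X^{t,y}_s\|_{\mathcal{H}}\}$. Applying \eqref{lip} of Lemma \ref{lipSDE} with $p=1$ (and noting the estimate is insensitive to the starting time $t$ by time-homogeneity of the coefficients) gives the bound $L_\Theta\,C_{1,T}\|x-y\|_{\mathcal{H}}$. Since this holds for every admissible $\tau$, taking the supremum over $\tau$ on both sides — using the elementary fact that $|\sup_\tau a_\tau-\sup_\tau b_\tau|\le\sup_\tau|a_\tau-b_\tau|$ — yields \eqref{lip-V} with $L_{\mathcal{U}}:=L_\Theta\,C_{1,T}$.

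The only genuinely delicate point is the passage from the gradient bound \eqref{psi2} to a pointwise Lipschitz inequality for $\Theta(t,\cdot)$ on all of $\mathcal{H}$: one must check that $\Theta(t,\cdot)$ is recovered from its Friedrichs directional derivatives along finitely many basis directions $\varphi_k$, which requires either that $\Theta(t,\cdot)$ is continuously Fréchet (or Gâteaux) differentiable with gradient $D\Theta$, or an approximation argument through the regular functions mentioned in Remark \ref{suffregtheta}. Given the continuity assumptions on $D\Theta$ in \eqref{psi2} and on $D^2\Theta$ in Assumption \ref{ass-psi}, this is routine, but it is the step where the infinite-dimensional structure could be mishandled. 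I expect the authors to treat it briefly, perhaps by a one-line appeal to differentiability; the rest of the proof is then a direct combination of the sup-over-$\tau$ inequality with \eqref{psi1} and \eqref{lip}.
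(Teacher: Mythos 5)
Your proof is correct and follows essentially the same route as the paper's: non-negativity and the bound $\overline{\Theta}$ are immediate from \eqref{psi1}, and the Lipschitz estimate is obtained by bounding $|\mathcal{U}(t,x)-\mathcal{U}(t,y)|$ via $\sup_\tau|\cdots|\le\mathbb{E}\{\sup_s|\Theta(s,X^{t,x}_s)-\Theta(s,X^{t,y}_s)|\}$, the Lipschitz property of $\Theta$ from \eqref{psi2}, time-homogeneity, and \eqref{lip} with $p=1$, giving $L_\mathcal{U}=L_\Theta C_{1,T}$ exactly as in the paper. The "delicate point" you flag is treated by the paper exactly as you anticipate: condition \eqref{psi2} is simply read as the Lipschitz property of $\Theta$ in the space variable, with no further elaboration.
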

\begin{proof}
The first claim is obvious. To show \eqref{lip-V} take $x,y\in\mathcal{H}$ and fix $t\in[0,T]$. Then
\begin{align*}
\mathcal{U}(t,x)-\mathcal{U}(t,y) &\leq \sup_{t\leq\tau\leq T}\mathbb{E}\left\{\Theta(\tau,X^{t,x}_{\tau})-\Theta(\tau,X^{t,y}_{\tau})\right\}\\
&\leq \mathbb{E}\left\{\sup_{t\leq s\leq T}|\Theta(s,X^{t,x}_s)-\Theta(s,X^{t,y}_s)|\right\}\leq L_{\Theta}\,\mathbb{E}\left\{\sup_{t\leq s\leq T}\|X^{t,x}_s-X^{t,y}_s\|_{\mathcal{H}}\right\},
\end{align*}
by \eqref{psi2}. Similarly for $\mathcal{U}(t,y)-\mathcal{U}(t,x)$; hence
\begin{equation*}
|\mathcal{U}(t,x)-\mathcal{U}(t,y)|\leq L_{\Theta}\mathbb{E}\left\{\sup_{t\leq s\leq T}\|X^{t,x}_s-X^{t,y}_s\|_{\mathcal{H}}\right\}.
\end{equation*}
The coefficients in \eqref{SDE-infty} are time-homogeneous, hence
\begin{align*}
\mathbb{E}\left\{\sup_{t\leq s\leq T}\|X^{t,x}_s-X^{t,y}_s\|_{\mathcal{H}}\right\} &= \mathbb{E}\left\{\sup_{0\leq s\leq T-t}\|X^{x}_s-X^{y}_s\|_{\mathcal{H}}\right\}\leq C_{1,T}\|x-y\|_\mathcal{H},
\end{align*}
and \eqref{lip-V} follows with $L_\mathcal{U}=L_{\Theta}\,C_{1,T}$ (cf.~\eqref{lip}).
\end{proof}


\section{The approximation scheme}\label{sec-appr}

In this section we provide an algorithm for the finite dimensional reduction of the optimal stopping problem \eqref{OS1}.
The algorithm requires two separate steps (a similar approach was used for instance in \cite{KeSw03} in a different context). First, we obtain a Yosida approximation of the unbounded operator $A$ by bounded operators $A_\alpha$; then we provide a finite dimensional reduction of the SDE. At each step a corresponding optimal stopping problem is studied.

\subsection{Yosida approximation}\label{sec-yos00}
A natural way to deal with an unbounded linear operator is to introduce its Yosida approximation, which does not require any further assumptions. The Yosida approximation of $A$ is defined as $A_\alpha:=\alpha A(\alpha I-A)^{-1}$, for $\alpha>0$ (cf. \cite{Pazy}). The corresponding SDE is
\begin{equation}\label{SDE-yos}
\left\{
\begin{array}{l}
dX^{(\alpha)x}_t=A_\alpha X^{(\alpha)x}_tdt+\sigma(X^{(\alpha)x}_t)dW^0_t,\qquad t\in[0,T],\\
\\
X^{(\alpha)x}_0=x,
\end{array}
\right.
\end{equation}
which admits a unique strong solution, $X^{(\alpha)x}$, since $A_\alpha$ is a bounded linear operator. That is,
\begin{equation}\label{strongXa}
X^{(\alpha)x}_t=x+\int_0^t{A_\alpha X^{(\alpha)x}_sds}+\int_0^t{\sigma(X^{(\alpha)x}_s)dW^0_s},\qquad t\in[0,T],\,\mathbb{P}\textrm{-a.s.}
\end{equation}
Clearly a strong solution is also a mild solution (cf.~\cite{DaPr-Zab}), hence $X^{(\alpha)x}$ might be equivalently interpreted as
\begin{equation*}
X^{(\alpha)x}_t=e^{tA_{\alpha}}x+\int_0^t{e^{(t-s)A_{\alpha}}\sigma(X^{(\alpha)x}_s)dW^0_s},\qquad t\in[0,T],\,\mathbb{P}\textrm{-a.s.}
\end{equation*}
Similarly $X^{(\alpha)t,x}$ will denote the solution starting at time $t$ from $x$. The following important convergence result is proved in \cite{DaPr-Zab}, Proposition 7.5 and it is here recalled for completeness.
\begin{prop}\label{conv-yos}
Let $X^x$ be the unique mild solution of equation (\ref{SDE-infty}) and $X^{(\alpha)x}$ the unique strong solution of equation (\ref{SDE-yos}). For $1\le p<\infty$, the following convergence holds
\begin{eqnarray*}
\lim_{\alpha\to\infty}{\mathbb{E}\left\{\sup_{0\leq t\leq T}\|X^{(\alpha)x}_t-X^x_t\|^p_{\mathcal{H}}\right\}}=0,\qquad x\in\mathcal{H}.
\end{eqnarray*}
\end{prop}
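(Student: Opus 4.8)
The plan is to compare the two processes through their \emph{mild} (variation-of-constants) representations, using two classical facts. At the semigroup level the Yosida approximations converge: $e^{tA_\alpha}z\to S(t)z$ as $\alpha\to\infty$ for every $z\in\mathcal H$, uniformly for $t$ in compacts, and the family is uniformly bounded for $\alpha$ large, $M_T:=\sup_{\alpha\ge\alpha_0}\sup_{0\le t\le T}\|e^{tA_\alpha}\|_{\mathcal L(\mathcal H)}<+\infty$ (with also $\sup_{t\le T}\|S(t)\|\le M_T$), by the Hille--Yosida estimates. On the stochastic side, Assumption \ref{ass-sigma} makes $\sigma$ Lipschitz with at most linear growth, so \eqref{apr1} holds for $X^x$, while $X^{(\alpha)x}$ has finite $p$-th moments for each fixed $\alpha$ by the standard theory for SDEs with the bounded operator $A_\alpha$.

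First I would split, for $t\in[0,T]$,
\begin{equation*}
X^{(\alpha)x}_t-X^x_t=\underbrace{\big(e^{tA_\alpha}-S(t)\big)x}_{R^{(1)}_\alpha(t)}+\underbrace{\int_0^t\big(e^{(t-s)A_\alpha}-S(t-s)\big)\sigma(X^x_s)\,dW^0_s}_{R^{(2)}_\alpha(t)}+\int_0^t e^{(t-s)A_\alpha}\big(\sigma(X^{(\alpha)x}_s)-\sigma(X^x_s)\big)\,dW^0_s .
\end{equation*}
The term $R^{(1)}_\alpha$ is deterministic and $\sup_{0\le t\le T}\|R^{(1)}_\alpha(t)\|_{\mathcal H}\to0$ by the pointwise semigroup convergence. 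For the last term, the factorization formula for stochastic convolutions (which lets one move $\sup_t$ inside), together with the Burkholder--Davis--Gundy inequality, the uniform bound $M_T$, the Lipschitz constant of $\sigma$ and the monotonicity of $T'\mapsto\mathbb E\{\sup_{0\le u\le T'}\|X^{(\alpha)x}_u-X^x_u\|^p_{\mathcal H}\}$, yields for every $T'\le T$
\begin{equation*}
\mathbb E\Big\{\sup_{0\le t\le T'}\Big\|\int_0^t e^{(t-s)A_\alpha}\big(\sigma(X^{(\alpha)x}_s)-\sigma(X^x_s)\big)\,dW^0_s\Big\|^p_{\mathcal H}\Big\}\le C_{p,T}\int_0^{T'}\mathbb E\Big\{\sup_{0\le u\le s}\big\|X^{(\alpha)x}_u-X^x_u\big\|^p_{\mathcal H}\Big\}\,ds ,
\end{equation*}
with $C_{p,T}$ independent of $\alpha$.

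The delicate point, on which I would concentrate, is showing $\mathbb E\big\{\sup_{0\le t\le T}\|R^{(2)}_\alpha(t)\|^p_{\mathcal H}\big\}\to0$: the integrand contains $e^{(t-s)A_\alpha}-S(t-s)$, which converges only strongly and not in operator norm, so there is no uniform rate and the limit must be taken by dominated convergence. I would again use the factorization method: for $p>2$ and $\beta\in(1/p,1/2)$ (the case $1\le p\le 2$ then following by Jensen's inequality), write $R^{(2)}_\alpha(t)=c_\beta\int_0^t(t-s)^{\beta-1}e^{(t-s)A_\alpha}Y_\alpha(s)\,ds$ with $Y_\alpha(s)=\int_0^s(s-r)^{-\beta}\big(e^{(s-r)A_\alpha}-S(s-r)\big)\sigma(X^x_r)\,dW^0_r$. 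The moment bound for stochastic integrals together with Minkowski's inequality estimates $\mathbb E\|Y_\alpha(s)\|^p_{\mathcal H}$ by a weighted integral whose integrand is dominated by a fixed integrable function of $r$ — using $\|e^{hA_\alpha}-S(h)\|\le 2M_T$, the integrability of $(s-r)^{-2\beta}$, and $\sup_{r\le T}\mathbb E\|\sigma(X^x_r)\|^p_{\mathcal H}<\infty$ from \eqref{apr1} — and which tends to $0$ as $\alpha\to\infty$ by the semigroup convergence. Dominated convergence then gives $\sup_{0\le s\le T}\mathbb E\|Y_\alpha(s)\|^p_{\mathcal H}\to0$, and since $\beta p>1$ the bounded fractional-integral operator transfers this to $\mathbb E\{\sup_{0\le t\le T}\|R^{(2)}_\alpha(t)\|^p_{\mathcal H}\}\to0$.

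Finally, setting $\psi_\alpha(T'):=\mathbb E\big\{\sup_{0\le t\le T'}\|X^{(\alpha)x}_t-X^x_t\|^p_{\mathcal H}\big\}$ — finite for each $\alpha$ by the moment bounds for $X^x$ and $X^{(\alpha)x}$ — the three estimates combine into $\psi_\alpha(T')\le\varepsilon_\alpha+C_{p,T}\int_0^{T'}\psi_\alpha(s)\,ds$ for all $T'\le T$, where $\varepsilon_\alpha:=\mathbb E\{\sup_t\|R^{(1)}_\alpha(t)\|^p_{\mathcal H}\}+\mathbb E\{\sup_t\|R^{(2)}_\alpha(t)\|^p_{\mathcal H}\}\to0$. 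Gronwall's lemma then yields $\psi_\alpha(T)\le\varepsilon_\alpha\,e^{C_{p,T}T}\to0$ as $\alpha\to\infty$, which is the assertion. The main obstacle throughout is the handling of the stochastic convolution terms: both the passage to the limit in $R^{(2)}_\alpha$ and the self-referential bound for the third term require the factorization device, since the presence of $e^{(t-s)A_\alpha}$ under the integral prevents a direct application of a martingale maximal inequality in $t$.
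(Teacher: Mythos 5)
The paper offers no proof of this proposition: it is imported verbatim from Da Prato--Zabczyk, Proposition 7.5, so your write-up is in effect a reconstruction of that argument, and the overall strategy is the right one. The decomposition into $R^{(1)}_\alpha$, $R^{(2)}_\alpha$ and the Lipschitz term, the uniform Hille--Yosida bound $M_T$, the Gronwall closure, and the reduction of $1\le p\le 2$ to $p>2$ via Jensen are all correct.

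There is, however, one step that fails as written: the factorization identity you assert for $R^{(2)}_\alpha$. If you apply the stochastic Fubini theorem to $c_\beta\int_0^t(t-s)^{\beta-1}e^{(t-s)A_\alpha}Y_\alpha(s)\,ds$ with your $Y_\alpha$, the resulting kernel is $\int_r^t(t-s)^{\beta-1}(s-r)^{-\beta}e^{(t-s)A_\alpha}\bigl(e^{(s-r)A_\alpha}-S(s-r)\bigr)\,ds$: the first product telescopes to $c_\beta^{-1}e^{(t-r)A_\alpha}$ by the semigroup law and the Beta integral, but the cross term $e^{(t-s)A_\alpha}S(s-r)$ is not $S(t-r)$, so the identity does not reproduce $R^{(2)}_\alpha(t)$. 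The correct statement is obtained by factorizing the two stochastic convolutions separately, with $Y^1_\alpha(s)=\int_0^s(s-r)^{-\beta}e^{(s-r)A_\alpha}\sigma(X^x_r)\,dW^0_r$ and $Y^2(s)=\int_0^s(s-r)^{-\beta}S(s-r)\sigma(X^x_r)\,dW^0_r$, which gives
\begin{equation*}
R^{(2)}_\alpha(t)=c_\beta\int_0^t(t-s)^{\beta-1}\Bigl[e^{(t-s)A_\alpha}\bigl(Y^1_\alpha(s)-Y^2(s)\bigr)+\bigl(e^{(t-s)A_\alpha}-S(t-s)\bigr)Y^2(s)\Bigr]ds\,;
\end{equation*}
here $Y^1_\alpha-Y^2$ is exactly your $Y_\alpha$, so your estimate covers the first summand, but the second summand is dropped. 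Restoring it causes no real difficulty: H\"older in $s$ (using $(\beta-1)p'>-1$, i.e.\ precisely $\beta p>1$) bounds $\mathbb{E}\bigl\{\sup_{t\le T}\|\cdot\|^p\bigr\}$ of that term by $C\int_0^T\mathbb{E}\bigl\{\sup_{h\le T}\|(e^{hA_\alpha}-S(h))Y^2(s)\|^p\bigr\}ds$, and this tends to zero by the same dominated-convergence device you use elsewhere, since the integrand is dominated by $(2M_T)^p\|Y^2(s)\|^p$ (whose expectation is bounded uniformly in $s$) and vanishes pointwise by the strong convergence of $e^{hA_\alpha}$, uniform for $h$ in compacts, applied to the fixed vectors $Y^2(s,\omega)$. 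With this term reinstated the proof is complete.
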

\noindent
We define $\mathcal{U}_\alpha$ to be the value function of the optimal stopping problem corresponding to $X^{(\alpha)x}$,
\begin{eqnarray}\label{OS2}
\mathcal{U}_\alpha(t,x):=\sup_{t\leq\tau\leq T}\mathbb{E}\left\{\Theta(\tau,X^{(\alpha)t,x}_\tau)\right\}.
\end{eqnarray}
Notice that $\mathcal{U}_\alpha$ satisfies \eqref{bdd-V} and \eqref{lip-V} with the same constants. We have the convergence of $\mathcal{U}_\alpha$ to $\mathcal{U}$ (cf. \eqref{OS1}) as $\alpha\to\infty$ both uniformly with respect to $t$ and in $L^p(0,T;L^p(\mathcal{H},\mu))$-norms.
\begin{theorem}\label{cnv1}
The following convergence results hold,
\begin{equation}\label{cv-1}
\lim_{\alpha\to\infty}\sup_{0\leq t\leq T}|\mathcal{U}_\alpha(t,x)-\mathcal{U}(t,x)|=0,\qquad x\in\mathcal{H},
\end{equation}
\begin{equation}\label{cv-2}
\lim_{\alpha\to\infty}\int_0^T\int_{\mathcal{H}}{|\mathcal{U}_\alpha(t,x)-\mathcal{U}(t,x)|^p\mu(dx)}dt=0,\qquad 1\leq p<\infty.
\end{equation}
\end{theorem}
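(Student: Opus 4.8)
The plan is to reduce the convergence of the value functions to the convergence of the underlying diffusions provided by Proposition \ref{conv-yos}. First I would fix $x\in\mathcal{H}$, $t\in[0,T]$ and an arbitrary stopping time $\tau$ with $t\le\tau\le T$. Using the Lipschitz bound \eqref{psi2} on $\Theta$ one gets
\[
\big|\mathbb{E}\{\Theta(\tau,X^{(\alpha)t,x}_\tau)\}-\mathbb{E}\{\Theta(\tau,X^{t,x}_\tau)\}\big|\le L_\Theta\,\mathbb{E}\Big\{\sup_{t\le s\le T}\|X^{(\alpha)t,x}_s-X^{t,x}_s\|_{\mathcal{H}}\Big\},
\]
and the right-hand side does not depend on $\tau$. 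Since the value functions \eqref{OS1} and \eqref{OS2} are suprema over the same family of stopping times, taking $\sup_\tau$ on the left yields
\[
\big|\mathcal{U}_\alpha(t,x)-\mathcal{U}(t,x)\big|\le L_\Theta\,\mathbb{E}\Big\{\sup_{t\le s\le T}\|X^{(\alpha)t,x}_s-X^{t,x}_s\|_{\mathcal{H}}\Big\}.
\]
Arguing exactly as in the proof of Proposition \ref{reg-v-fun} (the coefficients of \eqref{SDE-infty} and \eqref{SDE-yos} are time-homogeneous, $(W^0_{t+\cdot}-W^0_t)$ is again a standard Brownian motion, and strong uniqueness identifies the shifted solutions with $X^{(\alpha)x}$ and $X^x$), the right-hand side is bounded by $L_\Theta\,\mathbb{E}\{\sup_{0\le s\le T}\|X^{(\alpha)x}_s-X^x_s\|_{\mathcal{H}}\}$, which is independent of $t$.

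Then \eqref{cv-1} is immediate: taking $\sup_{0\le t\le T}$ on the left and letting $\alpha\to\infty$, the right-hand side vanishes by Proposition \ref{conv-yos} with $p=1$. For \eqref{cv-2} I would invoke dominated convergence on $\mathcal{H}\times[0,T]$ with the measure $\mu(dx)\,dt$: by \eqref{cv-1} the integrand $|\mathcal{U}_\alpha(t,x)-\mathcal{U}(t,x)|^p$ tends to $0$ as $\alpha\to\infty$ for every $(t,x)$, while by \eqref{bdd-V} (which holds for $\mathcal{U}_\alpha$ with the same constant) it is dominated by the constant $(2\overline{\Theta})^p$, integrable because $\mu$ is a probability measure and $[0,T]$ has finite length. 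Alternatively, one can keep the pathwise bound above, apply Jensen's inequality inside the expectation, and dominate $\mathbb{E}\{\sup_{s}\|X^{(\alpha)x}_s-X^x_s\|^p_{\mathcal{H}}\}$ by $C_{p,T}(1+\|x\|^p_{\mathcal{H}})$ (uniformly in $\alpha$, via the estimates behind Lemma \ref{lipSDE}), a $\mu$-integrable function since $Q$ is trace class; dominated convergence then applies using Proposition \ref{conv-yos}.

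There is no genuine obstacle here. The only two points needing a little care are the passage to the time-homogeneous pair $(X^{(\alpha)x},X^x)$, handled verbatim as in Proposition \ref{reg-v-fun}, and the verification that the dominating function in the dominated convergence argument is integrable, which is trivial with the bound $2\overline{\Theta}$; the route via Lemma \ref{lipSDE} is also available if one prefers not to pass through \eqref{cv-1} first.
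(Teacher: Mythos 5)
Your proposal is correct and follows essentially the same route as the paper: the Lipschitz property of $\Theta$ together with time-homogeneity reduces $\sup_{t}|\mathcal{U}_\alpha(t,x)-\mathcal{U}(t,x)|$ to $\mathbb{E}\{\sup_{0\le s\le T}\|X^{(\alpha)x}_s-X^x_s\|_{\mathcal{H}}\}$, which vanishes by Proposition \ref{conv-yos}, and \eqref{cv-2} follows by dominated convergence using the uniform bound $\overline{\Theta}$. The only cosmetic difference is that the paper states the intermediate bound with the constant $L_\mathcal{U}$ rather than $L_\Theta$, which is immaterial.
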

\begin{proof}
The arguments are similar to those used in the proof of Proposition \ref{reg-v-fun}. In fact by the Lipschitz property of the gain function $\Theta$ and the time-homogeneous character of the processes we have
\begin{eqnarray*}
|\mathcal{U}_\alpha(t,x)-\mathcal{U}(t,x)|&\leq & L_\mathcal{U}\,\mathbb{E}\left\{\sup_{0\leq s\leq T}\left\|X^{(\alpha)x}_s-X^{x}_s\right\|_{\mathcal{H}}\right\}.
\end{eqnarray*}
Since $L_\mathcal{U}$ is independent of $t$, the uniform convergence \eqref{cv-1} follows from Proposition \ref{conv-yos}. To prove \eqref{cv-2} it suffices to apply the dominated convergence theorem, since $\mathcal{U}_\alpha$ is uniformly bounded by $\overline{\Theta}$ (cf.~\eqref{psi1}).
\end{proof}
\begin{coroll}\label{ascoli}
If $\mathcal{U}_\alpha\in C_b([0,T]\times\mathcal{H})$ for all $\alpha>0$, then $\mathcal{U}_\alpha\to \mathcal{U}$ as $\alpha\to\infty$, uniformly on compact subsets  $[0,T]\times\mathcal{K}\subset [0,T]\times\mathcal{H}$. Moreover $\mathcal{U}(t,x)\in C_b([0,T]\times\mathcal{H})$.
\end{coroll}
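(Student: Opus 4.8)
The plan is to obtain Corollary~\ref{ascoli} from Theorem~\ref{cnv1} combined with the $\alpha$-uniform regularity of the approximating value functions: as observed right after \eqref{OS2}, every $\mathcal{U}_\alpha$ satisfies \eqref{bdd-V} and \eqref{lip-V} with the \emph{same} constants $\overline{\Theta}$ and $L_\mathcal{U}$ as $\mathcal{U}$. Hence the family $\{x\mapsto\mathcal{U}_\alpha(t,x):\alpha>0,\ t\in[0,T]\}$ is uniformly bounded and equi-Lipschitz in $x$, in particular equicontinuous, and by \eqref{cv-1} it converges to $\mathcal{U}$ pointwise in $x$, uniformly in $t$. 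The corollary then only requires upgrading this to local-uniform convergence (a one-sided Ascoli argument, the limit being already identified by Theorem~\ref{cnv1}) and transferring joint continuity across the limit.

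For the local-uniform convergence, I would fix a compact $\mathcal{K}\subset\mathcal{H}$ and $\varepsilon>0$ (we may assume $L_\mathcal{U}>0$, the case $L_\mathcal{U}=0$ being trivial). By total boundedness of $\mathcal{K}$ pick a finite $\delta$-net $\{x_1,\dots,x_N\}\subset\mathcal{K}$ with $\delta:=\varepsilon/(4L_\mathcal{U})$, and by \eqref{cv-1} choose $\alpha_0$ so that $\sup_{t\in[0,T]}|\mathcal{U}_\alpha(t,x_j)-\mathcal{U}(t,x_j)|<\varepsilon/2$ for every $j$ and every $\alpha\ge\alpha_0$. For an arbitrary $(t,x)\in[0,T]\times\mathcal{K}$, choosing $x_j$ with $\|x-x_j\|_{\mathcal{H}}<\delta$ and inserting $\pm\mathcal{U}_\alpha(t,x_j)$ and $\pm\mathcal{U}(t,x_j)$, the two Lipschitz contributions amount to at most $2L_\mathcal{U}\delta=\varepsilon/2$ and the middle term to at most $\varepsilon/2$, so $\sup_{[0,T]\times\mathcal{K}}|\mathcal{U}_\alpha-\mathcal{U}|\le\varepsilon$ for $\alpha\ge\alpha_0$. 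Since $[0,T]$ is itself compact, $[0,T]\times\mathcal{K}$ is a generic compact subset of $[0,T]\times\mathcal{H}$, which is the asserted uniform convergence on compacts.

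It then remains to show $\mathcal{U}\in C_b([0,T]\times\mathcal{H})$. Boundedness is exactly \eqref{bdd-V}. For continuity, fix $x_0\in\mathcal{H}$: by \eqref{cv-1} the functions $\mathcal{U}_\alpha(\cdot,x_0)$ converge to $\mathcal{U}(\cdot,x_0)$ uniformly on $[0,T]$, and each of them lies in $C([0,T])$ by the standing hypothesis $\mathcal{U}_\alpha\in C_b([0,T]\times\mathcal{H})$, so $t\mapsto\mathcal{U}(t,x_0)$ is continuous on $[0,T]$. Then for $(t,x)\to(t_0,x_0)$ the splitting
\[
|\mathcal{U}(t,x)-\mathcal{U}(t_0,x_0)|\le L_\mathcal{U}\|x-x_0\|_{\mathcal{H}}+|\mathcal{U}(t,x_0)-\mathcal{U}(t_0,x_0)|,
\]
using \eqref{lip-V} for the first term and the continuity just established for the second, shows that both terms vanish in the limit, giving joint continuity at $(t_0,x_0)$ and hence on all of $[0,T]\times\mathcal{H}$.

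The only point that genuinely needs care is that $\mathcal{H}$ is infinite dimensional and therefore not locally compact, so one cannot deduce global continuity of $\mathcal{U}$ merely from its continuity on every compact subset; it is precisely the $\alpha$- and $t$-uniform Lipschitz estimate \eqref{lip-V} that both powers the finite-net step and supplies the missing control in the $x$-direction in the last step. Everything else is routine $\varepsilon/3$-type bookkeeping.
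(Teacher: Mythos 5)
Your proof is correct and follows essentially the same route as the paper's: the paper packages your finite-net step by applying Dieudonn\'e's theorem (pointwise convergence of an equibounded, equi-Lipschitz family is uniform on compacts) to $F_\alpha(x):=\sup_{t\in[0,T]}|\mathcal{U}_\alpha(t,x)-\mathcal{U}(t,x)|$, and then, exactly as you do, upgrades continuity from compact sets to all of $[0,T]\times\mathcal{H}$ via the uniform Lipschitz estimate \eqref{lip-V}. Your explicit $\delta$-net computation and the final splitting in $t$ and $x$ are just an unpacking of those two cited facts, so nothing is missing.
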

\begin{proof}
Fix $x\in\mathcal{H}$, then \eqref{cv-1} implies $\mathcal{U}(\,\cdot\,,x)\in C_b([0,T];\mathbb{R})$. For each $\alpha>0$ define
\begin{equation*}
F_\alpha(x):=\sup_{t\in[0,T]}|\mathcal{U}_\alpha(t,x)-\mathcal{U}(t,x)|,
\end{equation*}
then $F_\alpha(x)\to 0$ as $\alpha\to\infty$ by \eqref{cv-1}. The family $(F_\alpha)_{\alpha>0}$ is equibounded and equi-continuous since \eqref{bdd-V} and \eqref{lip-V} hold for both $\mathcal{U}_\alpha$ and $\mathcal{U}$, and
\begin{align*}
\big|F_\alpha(x)-F_\alpha(y)\big| & \leq\sup_{t\in[0,T]}\big|\mathcal{U}_\alpha(t,x)-\mathcal{U}_\alpha(t,y)+\mathcal{U}(t,y)-\mathcal{U}(t,x)\big|\\
& \leq\sup_{t\in[0,T]}\big|\mathcal{U}_\alpha(t,x)-\mathcal{U}_\alpha(t,y)\big|+\sup_{t\in[0,T]}\big|\mathcal{U}(t,y)-
\mathcal{U}(t,x)\big|\leq 2L_\mathcal{U}\|x-y\|_{\mathcal{H}}.
\end{align*}
Then $\mathcal{U}_\alpha$ converges uniformly to $\mathcal{U}$, as $\alpha\to\infty$, on compact subsets $[0,T]\times\mathcal{K}$ (\cite{Die}, Theorem 7.5.6); that is
\begin{eqnarray*}
\lim_{\alpha\to\infty}\sup_{(t,x)\in[0,T]\times\mathcal{K}}|\mathcal{U}_{\alpha}(t,x)-\mathcal{U}(t,x)|=0.
\end{eqnarray*}
Hence, being the uniform limit of bounded continuous functions, $\mathcal{U}$ is continuous on any compact subset $[0,T]\times\mathcal{K}$ (cf. \cite{Die}, Theorem 7.2.1).
That and \eqref{lip-V} imply the continuity of $\mathcal{U}$ on $[0,T]\times\mathcal{H}$.
\end{proof}


\subsection{Finite dimensional reduction}
For each $n\in\mathbb{N}$ let us consider the finite dimensional subset $\mathcal{H}^{(n)}:=span\{\varphi_1,\varphi_2,\ldots,\varphi_n\}$ and the orthogonal projection operator $P_n:\mathcal{H}\to\mathcal{H}^{(n)}$. We approximate the diffusion coefficients of \eqref{SDE-yos}, respectively, by $\sigma^{(n)}:=(P_n\sigma)\circ P_n$ and $A_{\alpha,n}:=P_nA_\alpha P_n$. Notice that $A_{\alpha,n}$ is a bounded linear operator on $\mathcal{H}^{(n)}$. We define the process $X^{(\alpha)x;n}$ as the unique strong solution of the SDE on $\mathcal{H}^{(n)}$ given by
\begin{eqnarray}\label{SDE-n}
\left\{
\begin{array}{l}
dX^{(\alpha)x;n}_t=A_{\alpha,n}X^{(\alpha)x;n}_tdt+\sigma^{(n)}(X^{(\alpha)x;n}_t)dW^0_t+\epsilon_n\sum^n_{i=1}{\varphi_i\,dW^i_t},\qquad t\in[0,T],\\
\\
X^{(\alpha)x;n}_0=P_nx=:x^{(n)},
\end{array}
\right.
\end{eqnarray}
where $(\epsilon_n)_{n}$ is a sequence of positive numbers such that
\begin{eqnarray}\label{qubo}
\sqrt{n}\,\epsilon_n\to0\qquad\textrm{ as}\:\: n\to\infty.
\end{eqnarray}
Obviously $X^{(\alpha)x;n}$ lives in the finite dimensional subspace $\mathcal{H}^{(n)}$ but it may still be seen as a process in $\mathcal{H}$.
\begin{remark}
Notice that at each time $t\in[0,T]$, $X_t^{(\alpha)\,x;n}$ is not the projection of the process $X^{(\alpha)x}_t$ on the finite dimensional subspace. In fact, a process with that property would not be necessarily Markovian. Hence $X^{(\alpha)x;n}$ has to be considered as an auxiliary diffusion process which is used to approximate the original one.
\end{remark}
\begin{prop}\label{c-1}
It holds that
\begin{equation}\label{limit1}
\lim_{n\rightarrow\infty}\mathbb{E}\left\{\sup_{t\in[0,T]}\left\|X^{(\alpha)x;n}_t-X^{(\alpha)x}_t\right\|^2\right\}=0,
\end{equation}
uniformly with respect to $x$ on compact subsets of $\mathcal{H}$.
\end{prop}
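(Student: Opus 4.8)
The plan is to compare $X^{(\alpha)x;n}$ with the projection $P_n X^{(\alpha)x}$ and then with $X^{(\alpha)x}$ itself, exploiting the fact that $A_\alpha$ is bounded so that all the quantities in play solve genuine (strong) SDEs and the usual Gr\"onwall machinery applies. First I would set $Y^{(n)}_t:=X^{(\alpha)x;n}_t-P_nX^{(\alpha)x}_t$ and write the integral equation it satisfies, subtracting the mild/strong form \eqref{strongXa} for $X^{(\alpha)x}$ (projected by $P_n$, which commutes with the stochastic integral since $P_n$ is bounded and linear) from \eqref{SDE-n}. The drift difference splits into $A_{\alpha,n}(X^{(\alpha)x;n}_s-P_nX^{(\alpha)x}_s)$ plus the commutator-type error $A_{\alpha,n}P_nX^{(\alpha)x}_s-P_nA_\alpha X^{(\alpha)x}_s=P_nA_\alpha(P_n-I)X^{(\alpha)x}_s$; the diffusion difference produces $\sigma^{(n)}(X^{(\alpha)x;n}_s)-P_n\sigma(X^{(\alpha)x}_s)$, which I would further decompose as $P_n\big(\sigma(P_nX^{(\alpha)x;n}_s)-\sigma(P_nX^{(\alpha)x}_s)\big)$ plus $P_n\big(\sigma(P_nX^{(\alpha)x}_s)-\sigma(X^{(\alpha)x}_s)\big)$, using that $X^{(\alpha)x;n}$ already lives in $\mathcal{H}^{(n)}$ so $P_nX^{(\alpha)x;n}=X^{(\alpha)x;n}$; and finally there is the additive noise term $\epsilon_n\sum_{i=1}^n\varphi_i\,dW^i_t$.

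Next I would apply $\mathbb{E}\sup_{t\le T}\|\cdot\|^2$, use the Burkholder--Davis--Gundy inequality on the martingale parts, the boundedness of $P_n$ and $A_{\alpha,n}$ (uniformly in $n$, with norms controlled by $\|A_\alpha\|$), and the Lipschitz continuity of $\sigma=Q\gamma$ coming from Assumption \ref{ass-sigma}. This yields an estimate of the form
\begin{equation*}
\mathbb{E}\Big\{\sup_{t\le T}\|Y^{(n)}_t\|^2\Big\}\le C\int_0^T\mathbb{E}\Big\{\sup_{s\le r}\|Y^{(n)}_s\|^2\Big\}dr+R_n,
\end{equation*}
where the remainder $R_n$ collects three contributions: the commutator error $C\,T\,\mathbb{E}\!\int_0^T\|A_\alpha(I-P_n)X^{(\alpha)x}_s\|^2 ds$, the $\sigma$-approximation error $C\,\mathbb{E}\!\int_0^T\|\sigma(P_nX^{(\alpha)x}_s)-\sigma(X^{(\alpha)x}_s)\|^2 ds$, and the additive-noise error $C\,n\,\epsilon_n^2 T$ (using $\mathbb{E}\sup_{t\le T}\|\epsilon_n\sum_{i=1}^n\varphi_i W^i_t\|^2\le 4\,n\,\epsilon_n^2 T$ by BDG and orthonormality of the $\varphi_i$). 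Gr\"onwall's lemma then gives $\mathbb{E}\sup_{t\le T}\|Y^{(n)}_t\|^2\le C_{\alpha,T}R_n$. The term $n\epsilon_n^2$ vanishes by \eqref{qubo}; the other two vanish by dominated convergence because $\|(I-P_n)X^{(\alpha)x}_s\|\to0$ pointwise $\mathbb{P}$-a.s.\ (projections converge strongly to the identity on $\mathcal{H}$) with the integrable dominating bound coming from \eqref{apr1}, and similarly $\sigma$ is continuous with linear growth so $\|\sigma(P_nX^{(\alpha)x}_s)-\sigma(X^{(\alpha)x}_s)\|\to0$ a.s.\ under an $L^2$-dominating function. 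Finally $\|Y^{(n)}_t\|^2$ dominates $\tfrac12\|X^{(\alpha)x;n}_t-X^{(\alpha)x}_t\|^2-\|(I-P_n)X^{(\alpha)x}_t\|^2$, so combining with the already-established convergence of $\mathbb{E}\sup_{t\le T}\|(I-P_n)X^{(\alpha)x}_t\|^2\to0$ delivers \eqref{limit1}.

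For the uniformity in $x$ on compact sets $\mathcal{K}\subset\mathcal{H}$, the Gr\"onwall constant $C_{\alpha,T}$ does not depend on $x$, so it suffices that each piece of $R_n$ converge to $0$ uniformly in $x\in\mathcal{K}$. For the additive-noise term this is immediate (it is $x$-independent); for the commutator and $\sigma$-errors I would invoke the estimate \eqref{lip} together with \eqref{apr1}: covering $\mathcal{K}$ by finitely many balls, one reduces the supremum over $x\in\mathcal{K}$ to convergence at finitely many centres plus an equicontinuity estimate controlled by $\mathbb{E}\sup_{t\le T}\|X^x_t-X^y_t\|^2\le C\|x-y\|^2$ (and its Yosida analogue), which is uniform. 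The main obstacle is the commutator term $A_\alpha(I-P_n)X^{(\alpha)x}_s$: unlike $\sigma$, the bounded operator $A_\alpha$ need not interact nicely with the projections $(P_n)$, so one cannot bound $\|A_\alpha(I-P_n)z\|$ by $\|A_\alpha\|\,\|(I-P_n)z\|$ in a way that is uniformly small in $n$ — rather one must use that $(I-P_n)X^{(\alpha)x}_s\to0$ a.s.\ in $\mathcal{H}$ and then that $A_\alpha$ is a \emph{fixed} bounded operator, hence continuous, so $A_\alpha(I-P_n)X^{(\alpha)x}_s\to0$ a.s., and close the argument by dominated convergence with dominating function $\|A_\alpha\|\sup_{t\le T}\|X^{(\alpha)x}_t\|\in L^2(\mathbb{P})$. (It is essential here that $\alpha$ is fixed throughout; the $n\to\infty$ limit is taken before any $\alpha\to\infty$ limit.)
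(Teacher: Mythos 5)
Your proof is correct and follows essentially the same route as the paper's: subtract the two strong-solution integral equations, split the drift and diffusion differences into a Lipschitz part (absorbed by Gr\"onwall) and projection-tail errors (killed by dominated convergence with the moment bound \eqref{apr1}), and use $n\epsilon_n^2\to0$ from \eqref{qubo} for the additive noise. The only cosmetic differences are that you insert the intermediate comparison with $P_nX^{(\alpha)x}$ (the paper compares directly with $X^{(\alpha)x}$, so the tail terms such as $(I-P_n)A_\alpha X^{(\alpha)x}_s$ and $\|P_nx-x\|^2$ sit inside the Gr\"onwall remainder rather than in a final triangle inequality requiring $\mathbb{E}\sup_t\|(I-P_n)X^{(\alpha)x}_t\|^2\to0$), and that you obtain uniformity on compacts by a covering/equi-Lipschitz argument where the paper invokes Dini's theorem on the monotonically decreasing, continuous remainder $M_n(x)$.
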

\begin{proof}
Since $X^{(\alpha)x;n}$ and $X^{(\alpha)x}$ are both strong solutions, i.e.
\begin{eqnarray*}
X^{(\alpha)x;n}&=&P_nx+\int_0^t{A_{\alpha,n}X^{(\alpha)x;n}_sds}+\int_0^t{\sigma^{(n)}(X^{(\alpha)x;n}_s)dW^0_s}+
\epsilon_n\sum_{i=1}^n{\varphi_iW^i_t},
\end{eqnarray*}
and \eqref{strongXa} holds, we have
\begin{align*}
\|X^{(\alpha)x;n}_t-X^{(\alpha)x}_t\|^2_{\mathcal{H}}\leq & 6\bigg[\|P_nx-x\|^2_{\mathcal{H}}+\Big\|\int_0^t{P_nA_\alpha(X^{(\alpha)x;n}_s-X^{(\alpha)x}_s)ds}\Big\|^2_{\mathcal{H}}\\
&+\Big\|\int_0^t{(I-P_n)A_\alpha X^{(\alpha)x}_sds}\Big\|^2_{\mathcal{H}}+\Big\|\int_0^t{P_n[\sigma(X^{(\alpha)x;n}_s)-\sigma(X^{(\alpha)x}_s)]dW^0_s}\Big\|^2_{\mathcal{H}}\\
&+\Big\|\int_0^t{(I-P_n)\sigma(X^{(\alpha)x}_s)dW^0_s}\Big\|^2_{\mathcal{H}}+\epsilon_n^2\,\sum_{i=1}^n{|W^i_t|^2}\bigg],
\end{align*}
where we used the fact that $A_{\alpha,n}X^{(\alpha)x;n}=P_nA_\alpha X^{(\alpha)x;n}$. Denote by $\|\,\cdot\,\|_{L}$ the norm of linear operators on $\mathcal{H}$. We use H\"older's inequality to estimate the time-integrals, then take the supremum over $t\in[0,T]$ and the expected value. By isometry of the stochastic integral and Fubini's theorem we obtain
\begin{align*}
\mathbb{E}\left\{\sup_{0\leq t\leq T}\|X^{(\alpha)x;n}_t-X^{(\alpha)x}_t\|^2_{\mathcal{H}}\right\}\hspace{-2pt}\leq & 6\bigg[\|P_nx-x\|^2_{\mathcal{H}}+T\|A_\alpha\|^2_{L}\hspace{-3pt}\int_0^T{\hspace{-5pt}\mathbb{E}\left\{\sup_{0\leq u\leq s}\|X^{(\alpha)x;n}_u-X^{(\alpha)x}_u\|^2_{\mathcal{H}}\right\}ds}\\
&+T\int_0^T{\mathbb{E}\left\{\|(I-P_n)A_\alpha X^{(\alpha)x}_s\|^2_{\mathcal{H}}\right\}ds}\\
&+\int_0^T{\mathbb{E}\left\{\|\sigma(X^{(\alpha)x;n}_s)-\sigma(X^{(\alpha)x}_s)\|^2_{\mathcal{H}}\right\}ds}\\
&+\int_0^T{\mathbb{E}\left\{\|(I-P_n)\sigma(X^{(\alpha)x}_s)\|^2_{\mathcal{H}}\right\}ds}+\epsilon_n^2\, \sum_{i=1}^n{\mathbb{E}\{\sup_{0\leq t\leq T}|W^i_t|^2}\}\,\bigg].
\end{align*}

By Assumption \ref{ass-sigma} the diffusion coefficient is Lipschitz and we denote by $L_\sigma>0$ its Lipschitz constant. Then we get
\begin{align*}
\mathbb{E}\left\{\sup_{0\leq t\leq T}\|X^{(\alpha)x;n}_t-X^{(\alpha)x}_t\|^2_{\mathcal{H}}\right\}\hspace{-2pt}\leq & 6\bigg[\|P_nx-x\|^2_{\mathcal{H}}+T\|A_\alpha\|^2_{L}\hspace{-3pt}\int_0^T{\hspace{-5pt}\mathbb{E}\left\{\sup_{0\leq u\leq s}\|X^{(\alpha)x;n}_u-X^{(\alpha)x}_u\|^2_{\mathcal{H}}\right\}ds}\\
&+T\,\int_0^T{\mathbb{E}\left\{\|(I-P_n)A_\alpha X^{(\alpha)x}_s\|^2_{\mathcal{H}}\right\}ds}\\
&+L_\sigma^2\int_0^T{\mathbb{E}\left\{\sup_{0\leq u\leq s}\|X^{(\alpha)x;n}_u-X^{(\alpha)x}_u\|^2_{\mathcal{H}}\right\}ds}\\
&+\int_0^T{\mathbb{E}\left\{\|(I-P_n)\sigma(X^{(\alpha)x}_s)\|^2_{\mathcal{H}}\right\}ds}+\epsilon_n^2\, n\, T\bigg].
\end{align*}
A straightforward application of Gronwall's lemma gives 
\begin{align}\label{roof}
\mathbb{E}&\left\{\sup_{0\leq t\leq T}\|X^{(\alpha)x;n}_t-X^{(\alpha)x}_t\|^2_{\mathcal{H}}\right\}\leq
C_T\,e^{T^2\|A_\alpha\|^2_L+T\,L^2_\sigma}\,M_n(x)
\end{align}
for some positive constant $C_T$ and with
\begin{eqnarray*}
M_n(x):=\|P_nx-x\|^2_{\mathcal{H}}+\epsilon_n^2 n T+\int_0^T{\mathbb{E}\left\{\|(I-P_n)A_\alpha X^{(\alpha)x}_s\|^2_{\mathcal{H}}+\|(I-P_n)\sigma(X^{(\alpha)x}_s)\|^2_{\mathcal{H}}\right\}ds}
\end{eqnarray*}
a continuous real function. The right hand side converges to zero as $n\to\infty$ by dominated convergence and condition \eqref{qubo} on $(\epsilon_n)_n$.
Since $M_n(x)$ decreases to zero as $n\to\infty$, Dini's theorem guarantees uniform convergence on any compact subset $\mathcal{K}\subset\mathcal{H}$.
\end{proof}
\begin{remark}\label{unif}
For any starting time $t\in[0,T]$, the previous proposition and the arguments of its proof still hold for $X^{(\alpha)t,x;n}$ and $X^{(\alpha)t,x}$, thanks to the time-homogeneous property of equations \eqref{SDE-yos} and \eqref{SDE-n}.
\end{remark}
For $n\geq1$ define $\Theta^{(n)}:[0,T]\times\mathcal{H}\to\mathbb{R}$ by
\begin{equation}\label{psi-n}
\Theta^{(n)}(t,x):=\Theta(t,P_nx)=\Theta(t,x^{(n)})
\end{equation}
(cf. \eqref{SDE-n}).
Of course, $P_nx^{(n)}=x^{(n)}$, hence $\Theta^{(n)}(t,\,\cdot\,)=\Theta(t,\,\cdot\,)$ on $\mathcal{H}^{(n)}$. However, in what follows it is convenient to use the notation $\Theta^{(n)}$ since this is a gain function on $\mathcal{H}^{(n)}$ and it will occur in the variational formulation of a finite dimensional optimal stopping problem approximating \eqref{OS2}. It is not hard to see that \eqref{psi2} and Dini's Theorem imply
\begin{equation}\label{psi-n2}
\lim_{n\to\infty}\sup_{(t,x)\in[0,T]\times\mathcal{K}}\big|\Theta^{(n)}(t,x)-\Theta(t,x)\big|=0,\quad\textrm{for every compact}\:
\mathcal{K}\subset\mathcal{H}.
\end{equation}
\begin{remark}\label{isom}
There is an isomorphism $\mathcal{I}_n:(\mathcal{H}^{(n)},\|\,\cdot\,\|_\mathcal{H})\to(\mathbb{R}^n,\|\,\cdot\,\|_{\mathbb{R}^n})$, in fact for any $x\in\mathcal{H}^{(n)}$ we may define $x_i:=\langle x,\varphi_i\rangle_{\mathcal{H}}$, $i=1,2,\ldots n$ and $\mathcal{I}_n x:=(x_1,\ldots,x_n)$.
\end{remark}
Let $\mathcal{U}^{(n)}_\alpha$ be the value function of the optimal stopping problem
\begin{equation}\label{OS3}
\mathcal{U}_\alpha^{(n)}(t,x^{(n)}):=\sup_{t\leq\tau\leq T}\mathbb{E}\left\{\Theta^{(n)}(\tau,X^{(\alpha)t,x;n}_\tau)\right\}.
\end{equation}
Obviously $\mathcal{U}_\alpha^{(n)}$ may also be seen as a function defined on $[0,T]\times\mathbb{R}^{n}$. Again, as for $\mathcal{U}_\alpha$, we point out that $\mathcal{U}^{(n)}_\alpha$ satisfies \eqref{bdd-V} and \eqref{lip-V} with the same constants. The value function $\mathcal{U}^{(n)}_\alpha$ converges to $\mathcal{U}_\alpha$ of \eqref{OS2} as $n\to\infty$. In fact results similar to Theorem \ref{cnv1} and Theorem \ref{ascoli} hold.
\begin{theorem}\label{cnv2}
The following convergence results hold,
\begin{equation}\label{cv-3}
\lim_{n\to\infty}\sup_{(t,x)\in[0,T]\times\mathcal{K}}|\mathcal{U}^{(n)}_\alpha(t,x^{(n)})-\mathcal{U}_\alpha(t,x)|=0,\qquad \mathcal{K}\subset\mathcal{H},\:\:\:\:\mathcal{K}\,\textrm{compact},
\end{equation}
i.e. the convergence is uniform on any compact subset $[0,T]\times\mathcal{K}$, and
\begin{equation}\label{cv-4}
\lim_{n\to\infty}\int_0^T\int_{\mathcal{H}}{|\mathcal{U}^{(n)}_\alpha(t,x^{(n)})-\mathcal{U}_\alpha(t,x)|^p\mu(dx)}dt=0,\qquad 1\leq p<\infty.
\end{equation}
\end{theorem}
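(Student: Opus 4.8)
The plan is to mimic the proof of Theorem \ref{cnv1}, with Proposition \ref{c-1} (and Remark \ref{unif}) taking the role played there by Proposition \ref{conv-yos}. First, fix $\alpha>0$. Since $W^0$ and $W^1,\dots,W^n$ are all components of $W$, the value functions $\mathcal{U}_\alpha$ and $\mathcal{U}^{(n)}_\alpha$ are suprema over the \emph{same} family of $[t,T]$-valued $\{\mathcal{F}_s\}$-stopping times; hence, using the elementary inequality $|\sup_\tau a_\tau-\sup_\tau b_\tau|\le\sup_\tau|a_\tau-b_\tau|$ and monotonicity of the expectation,
\[
|\mathcal{U}^{(n)}_\alpha(t,x^{(n)})-\mathcal{U}_\alpha(t,x)|\le\sup_{t\le\tau\le T}\mathbb{E}\big\{|\Theta^{(n)}(\tau,X^{(\alpha)t,x;n}_\tau)-\Theta(\tau,X^{(\alpha)t,x}_\tau)|\big\}.
\]
Because $X^{(\alpha)t,x;n}_\tau\in\mathcal{H}^{(n)}$ and $\Theta^{(n)}(t,\cdot)=\Theta(t,\cdot)$ on $\mathcal{H}^{(n)}$, the two gain functions agree along the approximating diffusion, so the whole discrepancy is carried by the processes; the Lipschitz property \eqref{psi2} of $\Theta$, the time-homogeneity of \eqref{SDE-yos}--\eqref{SDE-n}, and Jensen's inequality then give
\begin{align*}
|\mathcal{U}^{(n)}_\alpha(t,x^{(n)})-\mathcal{U}_\alpha(t,x)| &\le L_\Theta\,\mathbb{E}\Big\{\sup_{t\le s\le T}\|X^{(\alpha)t,x;n}_s-X^{(\alpha)t,x}_s\|_{\mathcal{H}}\Big\}\\
&\le L_\Theta\Big(\mathbb{E}\Big\{\sup_{0\le s\le T}\|X^{(\alpha)x;n}_s-X^{(\alpha)x}_s\|^2_{\mathcal{H}}\Big\}\Big)^{1/2}.
\end{align*}

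The last bound is independent of $t$, so the supremum over $t\in[0,T]$ may be inserted on the left at no cost; Proposition \ref{c-1} then makes the right-hand side tend to $0$ as $n\to\infty$ uniformly for $x$ in any compact $\mathcal{K}\subset\mathcal{H}$, which is exactly \eqref{cv-3}. For \eqref{cv-4}, note that the same bound applied at a single point $x$ (a singleton being compact) yields the pointwise convergence $\mathcal{U}^{(n)}_\alpha(t,x^{(n)})\to\mathcal{U}_\alpha(t,x)$ for every $(t,x)\in[0,T]\times\mathcal{H}$; since $\mathcal{U}^{(n)}_\alpha$ and $\mathcal{U}_\alpha$ both satisfy \eqref{bdd-V}, the integrand is dominated by the constant $(2\overline{\Theta})^p$, which is integrable on the finite measure space $([0,T]\times\mathcal{H},\,dt\otimes\mu)$ because $\mu$ is a probability measure, and \eqref{cv-4} follows by dominated convergence.

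I do not expect a genuine obstacle: the argument is essentially a transcription of the proof of Theorem \ref{cnv1}. The two points deserving care are that the $\Theta^{(n)}$-versus-$\Theta$ mismatch contributes nothing (because the reduced diffusion already takes values in $\mathcal{H}^{(n)}$, so no separate estimate via \eqref{psi-n2} is required), and that Proposition \ref{c-1} delivers convergence which is \emph{uniform in $x$ on compact sets} — precisely what upgrades the pointwise convergence to the uniform statement \eqref{cv-3}. If one preferred, a residual term $\sup_\tau\mathbb{E}\{|\Theta(\tau,X^{(\alpha)t,x}_\tau)-\Theta^{(n)}(\tau,X^{(\alpha)t,x}_\tau)|\}$ could be split off and controlled through \eqref{psi-n2} together with the a priori bound \eqref{apr1}, but this is an unnecessary detour.
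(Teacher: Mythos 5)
Your proposal is correct and follows essentially the same route as the paper: the key observation that $\Theta^{(n)}=\Theta$ along the finite-dimensional diffusion, the Lipschitz bound reducing everything to the process discrepancy controlled by Proposition \ref{c-1} (uniformly on compacts), and dominated convergence for \eqref{cv-4} are exactly the steps the paper invokes. No discrepancies to report.
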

\begin{proof}
The proof follows along the same lines as the proof of Theorem \ref{cnv1} since $\Theta^{(n)}(t,X^{(\alpha)t,x;n}_s)=\Theta(t,X^{(\alpha)t,x;n}_s)$, $s\geq t$. Then \eqref{cv-3} follows from the uniform convergence in Proposition \ref{c-1}, and \eqref{cv-4} follows from dominated convergence.
\end{proof}
\noindent
As a consequence we have
\begin{coroll}\label{cont1}
If $\mathcal{U}^{(n)}_\alpha\in C_b([0,T]\times\mathcal{H}^{(n)})$ for all $n\in\mathbb{N}$, then $\mathcal{U}_\alpha\in C_b([0,T]\times\mathcal{H})$.
\end{coroll}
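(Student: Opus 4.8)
The plan is to reproduce, essentially verbatim, the argument used for Corollary~\ref{ascoli}, with the pair $(\mathcal{U}_\alpha^{(n)},\mathcal{U}_\alpha)$ and the limit $n\to\infty$ now playing the roles of $(\mathcal{U}_\alpha,\mathcal{U})$ and $\alpha\to\infty$ there. The one point that needs a little care at the outset is that $\mathcal{U}_\alpha^{(n)}$ is, a priori, a continuous bounded function only on $[0,T]\times\mathcal{H}^{(n)}$ (equivalently on $[0,T]\times\mathbb{R}^n$ through the isomorphism of Remark~\ref{isom}). I would therefore first lift it to $[0,T]\times\mathcal{H}$ by setting $\widetilde{\mathcal{U}}_\alpha^{(n)}(t,x):=\mathcal{U}_\alpha^{(n)}(t,P_nx)$: since $P_n$ is a bounded, hence continuous, linear projection and $\mathcal{U}_\alpha^{(n)}\in C_b([0,T]\times\mathcal{H}^{(n)})$ by hypothesis, the composition $\widetilde{\mathcal{U}}_\alpha^{(n)}$ belongs to $C_b([0,T]\times\mathcal{H})$. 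Moreover, because $\mathcal{U}_\alpha^{(n)}$ satisfies \eqref{bdd-V} and \eqref{lip-V} with the $n$-independent constants $\overline{\Theta}$ and $L_\mathcal{U}$, and $\|P_nx-P_ny\|_{\mathcal{H}}\le\|x-y\|_{\mathcal{H}}$, the whole family $(\widetilde{\mathcal{U}}_\alpha^{(n)})_{n}$ is equibounded by $\overline{\Theta}$ and equi-Lipschitz in $x$, uniformly in $t$, with constant $L_\mathcal{U}$.

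Next I would invoke Theorem~\ref{cnv2}: estimate \eqref{cv-3} says precisely that $\widetilde{\mathcal{U}}_\alpha^{(n)}\to\mathcal{U}_\alpha$ uniformly on every compact set $[0,T]\times\mathcal{K}\subset[0,T]\times\mathcal{H}$. Since a uniform limit of continuous functions is continuous (\cite{Die}, Theorem~7.2.1), $\mathcal{U}_\alpha$ is continuous on each such $[0,T]\times\mathcal{K}$. Finally, exactly as at the end of the proof of Corollary~\ref{ascoli}, continuity on all compact subsets together with the Lipschitz bound \eqref{lip-V} for $\mathcal{U}_\alpha$ (which holds uniformly in $t$, as recalled right after \eqref{OS2}) upgrades this to continuity of $\mathcal{U}_\alpha$ on the whole of $[0,T]\times\mathcal{H}$; boundedness by $\overline{\Theta}$ is again the analogue of \eqref{bdd-V}. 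This yields $\mathcal{U}_\alpha\in C_b([0,T]\times\mathcal{H})$.

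I do not expect any genuine obstacle here: unlike in Corollary~\ref{ascoli}, no Ascoli-type step is even required, because \eqref{cv-3} already delivers convergence that is uniform on compacta. If one preferred to mirror Corollary~\ref{ascoli} literally, one could instead work with $G_n(x):=\sup_{t\in[0,T]}|\widetilde{\mathcal{U}}_\alpha^{(n)}(t,x)-\mathcal{U}_\alpha(t,x)|$, observe that $G_n\to0$ pointwise and that $(G_n)_n$ is equibounded and equicontinuous thanks to the uniform constants above, and then apply \cite{Die}, Theorem~7.5.6 — but this detour is redundant given \eqref{cv-3}. The only truly substantive inputs are the convergence \eqref{cv-3} (itself resting on the uniform-on-compacta convergence of the approximating diffusions in Proposition~\ref{c-1}) and the uniform-in-$n$ validity of \eqref{bdd-V}--\eqref{lip-V} for $\mathcal{U}_\alpha^{(n)}$, both already established above, so the remaining work is purely formal bookkeeping.
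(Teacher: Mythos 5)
Your proof is correct and takes essentially the same route as the paper's: uniform boundedness of the family $(\mathcal{U}^{(n)}_\alpha)_n$, the uniform-on-compacta convergence \eqref{cv-3} from Theorem \ref{cnv2}, \cite{Die} Theorem 7.2.1 for continuity on each $[0,T]\times\mathcal{K}$, and the Lipschitz estimate \eqref{lip-V} to upgrade to continuity on all of $[0,T]\times\mathcal{H}$. Your explicit lifting via $P_n$ and your observation that no Ascoli-type step is needed (unlike in Corollary \ref{ascoli}) merely spell out bookkeeping the paper leaves implicit.
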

\begin{proof}
Recall that $(\mathcal{U}^{(n)}_\alpha(t,x^{(n)}))_{n}$ is uniformly bounded (cf. Proposition \ref{reg-v-fun}) and \eqref{cv-3} holds. Hence \cite{Die}, Theorem 7.2.1 guarantees the continuity of $\mathcal{U}_\alpha$ on $[0,T]\times\mathcal{K}$. Arguments as in Corollary \ref{ascoli} provide the continuity on $[0,T]\times\mathcal{H}$.
\end{proof}
Later in the paper we will prove that $\mathcal{U}^{(n)}_\alpha$ is indeed continuous (cf.~Theorem \ref{unb-dom}).


\section{Infinite dimensional variational inequality: an existence result}\label{sec-fdvi}

In this Section we prove that the value function $\mathcal{U}$ of \eqref{OS1} is a \emph{strong} solution (in the sense of \cite{Ben-Lio82}) of a parabolic infinite dimensional variational inequality on $[0,T]\times\mathcal{H}$. We start by considering finite-dimensional bounded domains and for those we employ results by \cite{Ben-Lio82}. Then we pass to finite-dimensional unbounded domains, and hence to infinite-dimensional ones by considering solutions in specific Gauss-Sobolev spaces. We deal with uniqueness in Section \ref{uniqueness}.

\subsection{Finite-dimensional, bounded domains: general results}\label{sec:fd01}

When dealing with variational problems on finite dimensional bounded domains, we find bounds which are uniform with respect to the order of the approximation and the size of the domain. Recall the finite dimensional SDE \eqref{SDE-n}. Let $n\in\mathbb{N}$ and fix $\alpha>0$. Let $\mathcal{O}_R$ be the open ball in $\mathbb{R}^n$ with center in the origin and with radius $R$. Define $\tau_{R}(t,x)$ to be the first exit time from $\mathcal{O}_R$, i.e.
\begin{equation}\label{tauR}
\tau_{R}(t,x):=\inf\{s\geq t\,:\,X^{(\alpha)t,x;n}_s\notin \mathcal{O}_R\}\wedge T.
\end{equation}
We are slightly abusing the notation by considering $\mathcal{H}^{(n)}\sim\mathbb{R}^n$ and $X^{(\alpha)t,x;n}\in\mathbb{R}^n$. For simplicity we set $\tau_R:=\tau_R(t,x)$ and we introduce the optimal stopping problem arrested at $\tau_R$,
\begin{equation}\label{OS4}
\mathcal{U}^{(n)}_{\alpha,R}(t,x^{(n)}):=\sup_{t\leq\tau\leq T}\mathbb{E}\left\{\Theta^{(n)}(\tau\wedge\tau_{R},X^{(\alpha)t,x;n}_{\tau\wedge\tau_{R}})\right\}.
\end{equation}
The next result is similar to Theorem \ref{cnv2} and its proof is provided in the Appendix.
\begin{prop}\label{unb-d}
The function $\mathcal{U}^{(n)}_{\alpha,R}$ converges to $\mathcal{U}^{(n)}_{\alpha}$ as $R\to\infty$, uniformly on every compact subset $[0,T]\times\mathcal{K}\subset[0,T]\times\mathbb{R}^n$. Moreover if $\big(\mathcal{U}^{(n)}_{\alpha,R}\big)_{R>0}\subset C_b([0,T]\times\mathbb{R}^n)$, then $\mathcal{U}^{(n)}_{\alpha}\in C_b([0,T]\times\mathbb{R}^n)$.
\end{prop}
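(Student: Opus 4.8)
The plan is to mirror the proofs of Theorems~\ref{cnv1} and~\ref{cnv2}, using once more the boundedness and Lipschitz continuity of $\Theta$ together with an a priori moment estimate for the approximating diffusion. Fix $n\in\mathbb{N}$, $\alpha>0$, and identify $\mathcal{H}^{(n)}\sim\mathbb{R}^n$ as in Remark~\ref{isom}. For $t\in[0,T]$, $x\in\mathbb{R}^n$ and any admissible stopping time $\tau\in[t,T]$ we have $\tau\wedge\tau_R=\tau$ on the event $\{\tau_R\geq T\}$, so from $0\leq\Theta\leq\overline{\Theta}$ (cf.~\eqref{psi1}),
\begin{equation*}
\big|\mathbb{E}\{\Theta^{(n)}(\tau\wedge\tau_R,X^{(\alpha)t,x;n}_{\tau\wedge\tau_R})\}-\mathbb{E}\{\Theta^{(n)}(\tau,X^{(\alpha)t,x;n}_{\tau})\}\big|\leq 2\overline{\Theta}\,\mathbb{P}\big(\tau_R(t,x)<T\big).
\end{equation*}
Taking suprema over $\tau$ on both sides and using $|\sup_\tau a_\tau-\sup_\tau b_\tau|\leq\sup_\tau|a_\tau-b_\tau|$, together with \eqref{OS3} and \eqref{OS4}, gives $\big|\mathcal{U}^{(n)}_{\alpha,R}(t,x^{(n)})-\mathcal{U}^{(n)}_{\alpha}(t,x^{(n)})\big|\leq 2\overline{\Theta}\,\mathbb{P}(\tau_R(t,x)<T)$.

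Next I would bound the escape probability. From \eqref{tauR}, $\{\tau_R(t,x)<T\}\subseteq\{\sup_{t\leq s\leq T}\|X^{(\alpha)t,x;n}_s\|_{\mathcal{H}}\geq R\}$, so Chebyshev's inequality yields $\mathbb{P}(\tau_R(t,x)<T)\leq R^{-2}\,\mathbb{E}\{\sup_{t\leq s\leq T}\|X^{(\alpha)t,x;n}_s\|^2_{\mathcal{H}}\}$. Since $A_{\alpha,n}$ is bounded, $\sigma^{(n)}$ is Lipschitz with linear growth and the extra additive term in \eqref{SDE-n} is bounded in $L^2$, the standard moment estimate for SDEs (in the spirit of \eqref{apr1}; cf.~\cite{DaPr-Zab}, Theorem~7.4, and the computations in the proof of Proposition~\ref{c-1}) gives $\mathbb{E}\{\sup_{t\leq s\leq T}\|X^{(\alpha)t,x;n}_s\|^2_{\mathcal{H}}\}\leq C(1+\|x\|^2_{\mathcal{H}})$ for some $C=C(\alpha,n,T)>0$. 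Hence, for a compact $\mathcal{K}\subset\mathbb{R}^n$ with $\sup_{x\in\mathcal{K}}\|x\|_{\mathcal{H}}=:M<\infty$,
\begin{equation*}
\sup_{(t,x)\in[0,T]\times\mathcal{K}}\big|\mathcal{U}^{(n)}_{\alpha,R}(t,x^{(n)})-\mathcal{U}^{(n)}_{\alpha}(t,x^{(n)})\big|\leq\frac{2\overline{\Theta}\,C(1+M^2)}{R^2},
\end{equation*}
which tends to $0$ as $R\to\infty$; this is the asserted uniform convergence on compact subsets.

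For the second statement, assume $\big(\mathcal{U}^{(n)}_{\alpha,R}\big)_{R>0}\subset C_b([0,T]\times\mathbb{R}^n)$. By the first part $\mathcal{U}^{(n)}_{\alpha,R}\to\mathcal{U}^{(n)}_{\alpha}$ uniformly on each compact $[0,T]\times\mathcal{K}$, so $\mathcal{U}^{(n)}_{\alpha}$ is a uniform limit of continuous functions on every such compact and is therefore continuous there (\cite{Die}, Theorem~7.2.1). Since continuity is a local property and $[0,T]\times\mathbb{R}^n$ is a countable union of compacts, $\mathcal{U}^{(n)}_{\alpha}$ is continuous on $[0,T]\times\mathbb{R}^n$; being also bounded by $\overline{\Theta}$ (cf.~\eqref{bdd-V}), it belongs to $C_b([0,T]\times\mathbb{R}^n)$. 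This parallels Corollaries~\ref{ascoli} and~\ref{cont1}, but is in fact simpler, since the uniform-on-compacts convergence is produced directly and no Arzel\`a--Ascoli / equicontinuity step is required.

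The only mildly delicate point is the a priori second-moment bound for the finite-dimensional Yosida-regularized process $X^{(\alpha)t,x;n}$, with a constant linear in $1+\|x\|^2_{\mathcal{H}}$; this is, however, a routine consequence of the isometry/Burkholder estimates followed by Gronwall's lemma, precisely as carried out in the proof of Proposition~\ref{c-1}, and the fact that the constant may depend on the fixed parameters $\alpha$ and $n$ is immaterial for this statement. The remaining ingredients — the ``difference of suprema'' inequality, the identity $\tau\wedge\tau_R=\tau$ on $\{\tau_R\geq T\}$, and the passage from ``continuous on all compacts'' to ``continuous'' — are entirely standard, so I do not anticipate any real obstacle.
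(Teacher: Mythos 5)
Your proof is correct and follows essentially the same route as the paper's: both reduce the difference $\big|\mathcal{U}^{(n)}_{\alpha,R}-\mathcal{U}^{(n)}_{\alpha}\big|$ to $2\overline{\Theta}\,\mathbb{P}(\tau_R<T)$ using the boundedness of $\Theta$, control that probability by a Chebyshev/Markov bound with a moment estimate growing like a polynomial in $\|x\|_{\mathcal{H}}$ (the paper uses the first moment centered at $x$, you use the second moment centered at the origin — an immaterial difference), and then pass from uniform convergence on compacts to global continuity. No gaps.
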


Denote by $C^2_c(\mathbb{R}^n)$ the set of all $C^2$-functions on $\mathbb{R}^n$ with compact support. The infinitesimal generator of the diffusion $X^{(\alpha)x;n}$ is
\begin{equation}\label{rullo}
\mathcal{L}_{\alpha,n}\hspace{+1pt}g\hspace{-2pt}:=\frac{1}{2}\epsilon^2_n\sum^n_{i=1}{\frac{\partial^2g}{\partial x_i^2}}+\frac{1}{2}\sum^n_{i,j=1}[\sigma^{(n)}\sigma^{(n)*}]_{i,j}\hspace{-1pt}\frac{\partial^2g}{\partial x_i\partial x_j}+\sum_{i=1}^n{\bigg(\sum_{j=1}^n{x_j\langle A_\alpha\varphi_j ,\varphi_i\rangle}\bigg)\frac{\partial g}{\partial x_i}},
\end{equation}
for $g\in C^2_c(\mathbb{R}^n)$. Notice that
\begin{align}\label{sig1}
[\sigma^{(n)}\sigma^{(n)*}]_{i,j}\hspace{-1pt}(x)=&\langle\sigma^{(n)}(x),\varphi_i\rangle_{\mathcal{H}}\langle\sigma^{(n)}(x),\varphi_j\rangle_{\mathcal{H}},
\end{align}
since $W^0$ is a one dimensional Brownian motion. Moreover $\mathcal{L}_{\alpha,n}$ is a uniformly elliptic operator. The bilinear form associated to the operator $\mathcal{L}_{\alpha,n}$ is
\begin{align*}
a^{(\alpha,n)}_{R}(u,w):=-\hspace{-2pt}\int_{\mathcal{O}_R}{\hspace{-4pt}\mathcal{L}_{\alpha,n}u\,w\,dx^{(n)}}=
\sum_{i,j=1}^n{\left(\int_{\mathcal{O}_R}\frac{1}{2}B^{(n)}_{i,j}\,\frac{\partial u}{\partial x_i}\frac{\partial w}{\partial x_j}dx^{(n)}+\hspace{-4pt}\int_{\mathcal{O}_R}\hspace{-6pt}C^{(n,\alpha)}_{i,j}\frac{\partial u}{\partial x_i} w\,dx^{(n)}\right)},
\end{align*}
for $u,w\in H^1_0(\mathcal{O}_R)$ (cf.~\cite{Ben-Lio82} for the definition of $H^1_0$),
\begin{align}\label{bil-coef00}
\hspace{-6pt}B^{(n)}_{i,j}(x):=\epsilon^2_n\,\delta_{i,j}+[\sigma^{(n)}\sigma^{(n)*}]_{i,j}(x)\quad\text{and}\quad C^{(n,\alpha)}_{i,j}(x):=\frac{1}{2}\frac{\partial [\sigma^{(n)}\sigma^{(n)*}]_{i,j}}{\partial x_j}(x)-x_j\langle A_\alpha\varphi_j ,\varphi_i\rangle
\end{align}
where $\delta_{i,j}=0$ for $i\neq j$ and $\delta_{i,i}=1$. Denote by $(\,\cdot\,,\,\cdot\,)$ the scalar product in $L^2(\mathcal{O}_R)$. From Assumption \ref{ass-sigma} and uniform ellipticity of $\mathcal{L}_{\alpha,n}$, it is not hard to see that there exist constants $\zeta_{\alpha,n,R},C_{\alpha,n,R},C^\prime_{\alpha,n,R}>0$ such that
\begin{eqnarray}
|a^{(\alpha,n)}_{R}(u,w)|&\leq& C_{\alpha,n,R}\|u\|_{H^1_0(\mathcal{O}_R)}\,\|w\|_{H^1_0(\mathcal{O}_R)},\label{bilbd}\\
\nonumber\\
a^{(\alpha,n)}_{R}(u,u)+\zeta_{\alpha,n,R}\:\:(u,u)&\geq& C^\prime_{\alpha,n,R}\|u\|^2_{H^1_0(\mathcal{O}_R)}.\label{coer-1}
\end{eqnarray}
These properties guarantee well-posedness of the variational problem in the following proposition.

Define the closed convex set
\begin{equation}\label{urto}
\mathcal{K}_{n,R}:=\Big\{w\,:\,w\in H^1_0(\mathcal{O}_R)\:\:\textrm{and}\:\:w\geq0\,\textrm{a.e.}\,\Big\},
\end{equation}
and set
\begin{equation}\label{reps1}
u^{(n)}_{\alpha,R}:=\mathcal{U}^{(n)}_{\alpha,R}-\Theta^{(n)},
\end{equation}
\begin{align}\label{gen-inf}
f_{\alpha,n}:&=\frac{\partial \Theta^{(n)}}{\partial t}+\mathcal{L}_{\alpha,n}\Theta^{(n)}\nonumber\\
&=\frac{\partial \Theta^{(n)}}{\partial t}+\frac{1}{2}\epsilon^2_nTr\Big[D^2\Theta^{(n)}\Big]+\frac{1}{2}Tr\big[\sigma^{(n)}\sigma^{(n)*}D^2\Theta^{(n)}\big]+\langle A_{\alpha,n}x,D\Theta^{(n)}\rangle_{\mathcal{H}}.
\end{align}
We expect $u^{(n)}_{\alpha,R}:=\mathcal{U}^{(n)}_{\alpha,R}-\Theta^{(n)}$ to solve an obstacle problem with null obstacle.
Now \eqref{bilbd}, \eqref{coer-1} and the regularity of $f_{\alpha,n}$ in \eqref{gen-inf} are sufficient to apply \cite{Ben-Lio82}, Chapter 3, Theorems 2.2, 2.13, Corollaries 2.2, 2.3, 2.4 to obtain
\begin{prop}\label{bl-str}
There exists a unique solution $\bar{u}$ of the variational problem:
\begin{equation}\label{str-f}
\left\{
\begin{array}{l}
u(t,x^{(n)})\ge0,\:\:(t,x^{(n)})\in[0,T]\times\overline{\mathcal{O}}_{R}\:\:\:\:\text{and}\:\:\:\:
u(T,x^{(n)})=0,\:x^{(n)}\in\overline{\mathcal{O}}_{R};\\
\\
-\Big(\displaystyle{\frac{\partial u}{\partial t}}(t),w-u(t)\Big)+a^{(\alpha,n)}_R(u(t),w-u(t))-(f_{\alpha,n}(t),w-u(t))\geq0\\
\hspace{+190pt}\textrm{for a.e.~$t\in[0,T]$ and for all $w\in\mathcal{K}_{n,R}$}.
\end{array}
\right.
\end{equation}
Moreover, $\bar{u}\in L^p(0,T;W^{1,p}_0(\mathcal{O}_R))\cap L^p(0,T;W^{2,p}(\mathcal{O}_R))$, $\displaystyle{\frac{\partial\bar{u}}{\partial t}}\in L^p(0,T;L^{p}(\mathcal{O}_R))$ for all $1\leq p<\infty$ and $\bar{u}\in C([0,T]\times\overline{\mathcal{O}}_R)$.
\end{prop}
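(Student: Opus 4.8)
The statement to prove (Proposition \ref{bl-str}) is essentially a citation-based claim: given the bilinear-form estimates \eqref{bilbd}, \eqref{coer-1} and the regularity of the datum $f_{\alpha,n}$, one invokes the theory of parabolic variational inequalities of Bensoussan--Lions. Let me plan how to fill in the verification that the hypotheses of that theory are met.

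---

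The plan is to apply directly the general theory of parabolic variational inequalities in \cite{Ben-Lio82}, Chapter 3, after checking that all the structural hypotheses required there are satisfied in our setting; the bulk of the work is this verification, since the existence/uniqueness and regularity conclusions are then quoted verbatim.

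First I would fix the functional-analytic triple: take $V:=H^1_0(\mathcal{O}_R)$, $H:=L^2(\mathcal{O}_R)$, with the Gelfand inclusion $V\hookrightarrow H\hookrightarrow V^*$, both injections being dense and compact (the latter by Rellich's theorem, since $\mathcal{O}_R$ is a bounded domain with smooth boundary). The convex set $\mathcal{K}_{n,R}$ defined in \eqref{urto} is nonempty, closed and convex in $V$, and it is exactly of the form considered in \cite{Ben-Lio82} for obstacle problems with obstacle $\psi\equiv 0$. Next I would record that the coefficients $B^{(n)}_{i,j}$ and $C^{(n,\alpha)}_{i,j}$ from \eqref{bil-coef00} are bounded and measurable on $\mathcal{O}_R$: boundedness of $B^{(n)}_{i,j}$ follows from \eqref{sig1} and Assumption \ref{ass-sigma} (which makes $\sigma$, hence $\sigma^{(n)}$, continuous and locally bounded, so bounded on the bounded ball $\mathcal{O}_R$, plus the $\varepsilon_n^2\delta_{i,j}$ term), while $C^{(n,\alpha)}_{i,j}$ is bounded because $\partial[\sigma^{(n)}\sigma^{(n)*}]_{i,j}/\partial x_j$ is controlled by $D\gamma$ (bounded, by Assumption \ref{ass-sigma}(2), recalling $\sigma=Q\gamma$) together with the boundedness of $\sigma^{(n)}$ and $Q$, and because $x_j\langle A_\alpha\varphi_j,\varphi_i\rangle$ is bounded on $\mathcal{O}_R$ since $A_\alpha$ is a bounded operator and $\|x\|\le R$. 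Consequently the bilinear form $a^{(\alpha,n)}_R(\cdot,\cdot)$ is well defined on $V\times V$, and the continuity estimate \eqref{bilbd} and the G\aa rding inequality \eqref{coer-1} hold as asserted; these are precisely the coercivity/boundedness hypotheses (often written with a shift $\zeta_{\alpha,n,R}$, absorbed by the standard substitution $u\mapsto e^{-\zeta t}u$) used in \cite{Ben-Lio82}.

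Then I would check the hypotheses on the datum. From \eqref{gen-inf}, $f_{\alpha,n}=\partial_t\Theta^{(n)}+\mathcal{L}_{\alpha,n}\Theta^{(n)}$; by Assumption \ref{ass-psi}, $\partial_t\Theta$, $D\Theta$ and $D^2\Theta$ are continuous and bounded on $[0,T]\times\mathcal{H}$, so $\partial_t\Theta^{(n)}$, $D\Theta^{(n)}$, $D^2\Theta^{(n)}$ are continuous and bounded on $[0,T]\times\overline{\mathcal{O}}_R$; combined with the bounds on the coefficients of $\mathcal{L}_{\alpha,n}$ just noted, this gives $f_{\alpha,n}\in L^\infty((0,T)\times\mathcal{O}_R)\subset L^p(0,T;L^p(\mathcal{O}_R))$ for every $1\le p<\infty$, which is the integrability of the datum required for the $L^p$-regularity statements of \cite{Ben-Lio82} (Chapter 3, Thm 2.13 and Corollaries 2.2--2.4). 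The terminal condition $u(T,\cdot)=0$ is compatible with the obstacle $0\in\mathcal{K}_{n,R}$, so the compatibility condition ``initial/terminal datum lies in the convex set'' is trivially met. With all hypotheses in place, \cite{Ben-Lio82}, Chapter 3, Theorem 2.2 yields existence and uniqueness of a solution $\bar u$ of \eqref{str-f} in the weak/strong class $L^2(0,T;V)\cap C([0,T];H)$ with $\partial_t\bar u\in L^2(0,T;V^*)$; Theorem 2.13 and its corollaries upgrade this to $\bar u\in L^p(0,T;W^{1,p}_0(\mathcal{O}_R))\cap L^p(0,T;W^{2,p}(\mathcal{O}_R))$ with $\partial_t\bar u\in L^p(0,T;L^p(\mathcal{O}_R))$ for all $1\le p<\infty$; and the embedding $W^{2,p}(\mathcal{O}_R)\hookrightarrow C(\overline{\mathcal{O}}_R)$ for $p>n$ together with $\partial_t\bar u\in L^p(0,T;L^p)$ gives, via an Aubin--Lions/Morrey-type argument as in \cite{Ben-Lio82}, the continuity $\bar u\in C([0,T]\times\overline{\mathcal{O}}_R)$.

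The main obstacle, such as it is, lies not in any deep estimate but in carefully matching notation and hypotheses: confirming that the $\zeta_{\alpha,n,R}$-shifted G\aa rding inequality \eqref{coer-1} is the exact form needed (and that the shift is harmless), and that the regularity $f_{\alpha,n}\in L^p$ for \emph{all} finite $p$ — rather than just $p=2$ — is what powers the $W^{2,p}$ and continuity conclusions. I would also be careful that the coefficients, though only continuous (not Lipschitz) in $x$, still fall within the scope of the cited $L^p$-theory, which for the obstacle problem requires essentially bounded measurable coefficients for the principal part plus a mild continuity for the second-derivative estimates; here continuity of $\sigma$ (hence of $B^{(n)}$) suffices. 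Everything else is a direct quotation.
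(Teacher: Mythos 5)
Your proposal takes exactly the same route as the paper: the paper offers no separate proof of Proposition \ref{bl-str} beyond the sentence preceding it, which asserts that \eqref{bilbd}, \eqref{coer-1} and the regularity of $f_{\alpha,n}$ suffice to invoke \cite{Ben-Lio82}, Chapter 3, Theorems 2.2 and 2.13 and Corollaries 2.2--2.4. Your verification of the hypotheses (boundedness of the coefficients on $\mathcal{O}_R$, uniform ellipticity via the $\epsilon_n^2$ term, $f_{\alpha,n}\in L^\infty((0,T)\times\mathcal{O}_R)$, the harmless G\aa rding shift) is correct and simply makes explicit what the paper leaves implicit.
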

\begin{coroll}\label{str-sol-obs}
The function $\bar{u}$ coincides with the function $u^{(n)}_{\alpha,R}$ and uniquely solves in the almost everywhere sense the obstacle problem
\begin{eqnarray}\label{obs-1}
\left\{
\begin{array}{ll}
\max\left\{\displaystyle{\frac{\partial u}{\partial t}}+\mathcal{L}_{\alpha,n}u+f_{\alpha,n}\,,\,-u\right\}(t,x^{(n)})= 0,\:\: (t,x^{(n)})\in(0,T)\times\mathcal{O}_R, &\\
\\
u(t,x^{(n)})\geq0\:\:\textrm{on}\:\:[0,T]\times\overline{\mathcal{O}}_R\:\:\:\:\text{and}\:\:\:\: u(T,x^{(n)})=0,\:\: x^{(n)}\in\overline{\mathcal{O}}_R;&\\
\\
u(t,x^{(n)})=0,\:\: (t,x^{(n)})\in[0,T]\times{\partial\mathcal{O}_R}. &\\
\end{array}
\right.
\end{eqnarray}
Moreover, the optimal stopping time for $\mathcal{U}^{(n)}_{\alpha,R}$ of \eqref{OS4} is
\begin{equation}
\tau^\star_{\alpha,n,R}:=\inf\{s\geq t\,:\,\mathcal{U}^{(n)}_{\alpha,R}(s,X^{(\alpha)t,x;n}_s)=\Theta^{(n)}(s,X^{(\alpha)t,x;n}_s)\}\wedge
{\tau_R}\wedge T
\end{equation}
and
\begin{eqnarray}\label{dyn-prg-3}
\mathcal{U}^{(n)}_{\alpha,R}(t,x^{(n)})=\mathbb{E}\left\{\mathcal{U}^{(n)}_{\alpha,R}({\tau},X^{(\alpha)t,x;n}
_{{\tau}})\right\}\,,\qquad\text{for all $\tau\leq\tau^\star_{\alpha,n,R}$\,.}
\end{eqnarray}
\end{coroll}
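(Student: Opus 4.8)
The plan is to connect the variational inequality of Proposition \ref{bl-str} with the optimal stopping problem \eqref{OS4} via a verification-type argument combined with a characterization of the continuation region. First I would invoke the regularity of $\bar u$ from Proposition \ref{bl-str}: since $\bar u \in L^p(0,T;W^{2,p}(\mathcal{O}_R))$ with $\partial_t \bar u \in L^p(0,T;L^p(\mathcal{O}_R))$ for all $p$, an It\^o formula for Sobolev functions (the standard Krylov-type extension, valid here because $\mathcal{L}_{\alpha,n}$ is uniformly elliptic with smooth coefficients on the bounded domain $\mathcal{O}_R$) may be applied to $\bar u(s, X^{(\alpha)t,x;n}_s)$ up to the exit time $\tau_R$. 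Setting $v := \bar u + \Theta^{(n)}$, so that $v$ is the candidate for $\mathcal{U}^{(n)}_{\alpha,R}$, the obstacle problem \eqref{obs-1} rewrites as $\max\{\partial_t v + \mathcal{L}_{\alpha,n} v,\ \Theta^{(n)} - v\} = 0$ on $(0,T)\times\mathcal{O}_R$, with $v \ge \Theta^{(n)}$, $v(T,\cdot) = \Theta^{(n)}(T,\cdot)$ and $v = \Theta^{(n)}$ on the parabolic boundary portion $[0,T]\times\partial\mathcal{O}_R$.

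Next I would prove the two inequalities that pin down $v = \mathcal{U}^{(n)}_{\alpha,R}$. For the inequality $v \ge \mathcal{U}^{(n)}_{\alpha,R}$: applying It\^o's formula between $t$ and $\tau\wedge\tau_R$ for an arbitrary stopping time $\tau \in [t,T]$, the drift term $\partial_t v + \mathcal{L}_{\alpha,n} v \le 0$ a.e.\ (first component of the $\max$), and the local martingale part is a true martingale after localization (using boundedness of $v$ and of $\sigma^{(n)}, A_{\alpha,n}$ on $\overline{\mathcal{O}}_R$), so $v(t,x^{(n)}) \ge \mathbb{E}\{v(\tau\wedge\tau_R, X^{(\alpha)t,x;n}_{\tau\wedge\tau_R})\} \ge \mathbb{E}\{\Theta^{(n)}(\tau\wedge\tau_R, X^{(\alpha)t,x;n}_{\tau\wedge\tau_R})\}$, where the last step uses $v \ge \Theta^{(n)}$ everywhere including the boundary and terminal time. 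Taking the supremum over $\tau$ gives $v \ge \mathcal{U}^{(n)}_{\alpha,R}$. For the reverse inequality, I would use the stopping time $\tau^\star_{\alpha,n,R}$: on the stochastic interval $[t, \tau^\star_{\alpha,n,R})$ the process lives in the open continuation set $\{v > \Theta^{(n)}\} \cap \mathcal{O}_R$, where the obstacle constraint is inactive and hence $\partial_t v + \mathcal{L}_{\alpha,n} v = 0$ a.e.; It\^o's formula then yields the exact identity $v(t,x^{(n)}) = \mathbb{E}\{v(\tau^\star_{\alpha,n,R}, X^{(\alpha)t,x;n}_{\tau^\star_{\alpha,n,R}})\}$, and at $\tau^\star_{\alpha,n,R}$ one has $v = \Theta^{(n)}$ (either because the obstacle is hit, or because $\tau_R$ or $T$ is reached, where $v = \Theta^{(n)}$ by the boundary/terminal conditions). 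Hence $v(t,x^{(n)}) = \mathbb{E}\{\Theta^{(n)}(\tau^\star_{\alpha,n,R}, X^{(\alpha)t,x;n}_{\tau^\star_{\alpha,n,R}})\} \le \mathcal{U}^{(n)}_{\alpha,R}$, so $v = \mathcal{U}^{(n)}_{\alpha,R}$ and $\tau^\star_{\alpha,n,R}$ is optimal. The super-martingale property \eqref{dyn-prg-3} for $\tau \le \tau^\star_{\alpha,n,R}$ then follows from the same identity by optional sampling, since on $[t,\tau^\star_{\alpha,n,R}]$ the process $\mathcal{U}^{(n)}_{\alpha,R}(s, X^{(\alpha)t,x;n}_s)$ is a martingale (drift vanishes there).

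The remaining point is the coincidence $\bar u = u^{(n)}_{\alpha,R}$ asserted at the start of the corollary; but this is immediate once $v = \mathcal{U}^{(n)}_{\alpha,R}$ is established, since $\bar u = v - \Theta^{(n)} = \mathcal{U}^{(n)}_{\alpha,R} - \Theta^{(n)} = u^{(n)}_{\alpha,R}$ by definition \eqref{reps1}, and uniqueness in the almost-everywhere sense for \eqref{obs-1} is just a restatement of uniqueness in Proposition \ref{bl-str} after the elementary algebraic equivalence between the variational inequality \eqref{str-f} and the complementarity form \eqref{obs-1} (which is standard, using that $\mathcal{K}_{n,R}$ consists of nonnegative $H^1_0$ functions).

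The main obstacle I anticipate is the rigorous application of It\^o's formula to $\bar u$, which is only a Sobolev function and not $C^{1,2}$: one must invoke the appropriate It\^o--Krylov formula and justify that the exceptional null set (where $\partial_t v + \mathcal{L}_{\alpha,n} v$ or $D^2 v$ is not classically defined) is not charged by the occupation measure of $X^{(\alpha)t,x;n}$, which holds because the diffusion is genuinely nondegenerate (the $\epsilon_n^2 \delta_{i,j}$ term in $B^{(n)}_{i,j}$ guarantees uniform ellipticity) and hence has transition densities absolutely continuous with respect to Lebesgue measure. A secondary technical care is needed for the behaviour at the lateral boundary $\partial\mathcal{O}_R$ and at $t=T$, where one uses the continuity of $\bar u$ up to $[0,T]\times\overline{\mathcal{O}}_R$ from Proposition \ref{bl-str} together with the explicit boundary/terminal values.
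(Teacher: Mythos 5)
Your proposal is correct and follows essentially the same route as the paper: derive the a.e.\ complementarity system \eqref{obs-1} from the variational inequality by testing with $w=\bar u+\zeta$ ($\zeta\ge0$) and $w=0$, then run a verification argument via an It\^o/Dynkin formula for the Sobolev solution, justified by mollification, a Krylov-type estimate, and the uniform nondegeneracy of $X^{(\alpha)t,x;n}$ (so the exceptional null set is not charged). The only cosmetic difference is that the paper phrases the verification through the running-cost representation $\bar u(t,x^{(n)})=\sup_\tau\mathbb{E}\{\int_t^{\tau\wedge\tau_R}f_{\alpha,n}\,ds\}$ rather than directly through $v=\bar u+\Theta^{(n)}$ and the gain $\Theta^{(n)}$, and \eqref{dyn-prg-3} is a martingale (not merely supermartingale) identity, as your final remark in fact acknowledges.
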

\noindent The proof follows from Proposition \ref{bl-str} and is outlined in the Appendix for completeness.

\begin{remark}\label{weaker}
Notice that when $\Theta$ fulfils only \eqref{condth01}, the variational inequality still makes sense by considering $f_{\alpha,n}$ as a map from $[0,T]$ to the dual space of $W^{1,p}$.
\end{remark}


\subsubsection{Penalization method and some uniform bounds}

Now we would like to take limits in the variational inequalities as $R\to\infty$, $n\to\infty$, $\alpha\to\infty$, respectively. For that we need bounds on $u^{(n)}_{\alpha,R}$, $Du^{(n)}_{\alpha,R}$ and $\frac{\partial}{\partial\,t}u^{(n)}_{\alpha,R}$ uniformly in $(R,n,\alpha)$. The first two bounds are obtained in the next
Proposition.

Recall Remark \ref{conn} and the definition of $W^{1,p}(\mathcal{H},\mu)$ of \eqref{def-W1p}. Then for each $R>0$, consider the zero extension outside $\mathcal{O}_R$ of $u^{(n)}_{\alpha,R}$ and still denote it by $u^{(n)}_{\alpha,R}$ for simplicity.
\begin{prop}\label{univ-est-sol}
The family $\big(u^{(n)}_{\alpha,R}\big)_{R,n,\alpha}$ is bounded in $L^p(0,T;W^{1,p}(\mathcal{H},\mu))$ for $1\le p<+\infty$ uniformly with respect to $(R,n,\alpha)\in(0,+\infty)\times\mathbb{N}\times(0,+\infty)$.
\end{prop}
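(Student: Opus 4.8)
The plan is to exploit the fact that $u^{(n)}_{\alpha,R}=\mathcal{U}^{(n)}_{\alpha,R}-\Theta^{(n)}$ together with the already-established uniform bounds \eqref{bdd-V} and \eqref{lip-V} for $\mathcal{U}^{(n)}_{\alpha,R}$ and the uniform bounds on $\Theta$ coming from Assumption \ref{ass-psi}. The $L^p(0,T;L^p(\mathcal{H},\mu))$-part of the norm is the easy half: since $0\le\mathcal{U}^{(n)}_{\alpha,R}\le\overline{\Theta}$ and $0\le\Theta^{(n)}\le\overline{\Theta}$ everywhere, we get $\|u^{(n)}_{\alpha,R}(t)\|_{L^\infty}\le\overline{\Theta}$ uniformly, and after integrating against the probability measure $\mu$ over $[0,T]$ this gives a uniform $L^p(0,T;L^p(\mathcal{H},\mu))$ bound (recall $\mu$ is a probability measure, so $\|\cdot\|_{L^p(\mathcal{H},\mu)}\le\|\cdot\|_{L^\infty}$, and by Remark \ref{conn} the $\mathbb{R}^n$-integral against $\mu_n$ equals the $\mathcal{H}$-integral against $\mu$). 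The zero extension outside $\mathcal{O}_R$ does not affect this since the bound holds pointwise.

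The gradient part is where the work is. I would first note that, by \eqref{lip-V} applied to $\mathcal{U}^{(n)}_{\alpha,R}$ (which satisfies it with the same Lipschitz constant $L_\mathcal{U}$, as remarked after \eqref{OS4} via Proposition \ref{unb-d} and the analogous statements) and by the Lipschitz property \eqref{psi2} of $\Theta$, the function $u^{(n)}_{\alpha,R}$ is Lipschitz in $x$ with a constant $L_\mathcal{U}+L_\Theta$ \emph{uniform} in $(R,n,\alpha)$, at least on $\mathcal{O}_R$. From Proposition \ref{bl-str} we know $u^{(n)}_{\alpha,R}(t)\in W^{1,p}_0(\mathcal{O}_R)$ for a.e.\ $t$, so its weak gradient exists; a Lipschitz function on the (convex) ball has $\|Du^{(n)}_{\alpha,R}(t,x)\|_{\mathbb{R}^n}\le L_\mathcal{U}+L_\Theta$ for a.e.\ $x\in\mathcal{O}_R$. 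After zero-extension this bound persists a.e.\ on $\mathbb{R}^n$ (the extended function is still Lipschitz because it vanishes on $\partial\mathcal{O}_R$ and outside, being in $W^{1,p}_0$), hence in particular $\|Du^{(n)}_{\alpha,R}(t)\|_{L^p(\mathbb{R}^n,\mu_n;\mathbb{R}^n)}\le L_\mathcal{U}+L_\Theta$ for a.e.\ $t$, again using that $\mu_n$ is a probability measure. Integrating in $t$ over $[0,T]$ and combining with the $L^p$-bound on $u^{(n)}_{\alpha,R}$ itself gives the claimed uniform bound in $L^p(0,T;W^{1,p}(\mathcal{H},\mu))$, via Remark \ref{conn} to pass from $\mathbb{R}^n$ to $\mathcal{H}$.

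The main obstacle, and the step I would be most careful about, is justifying that the uniform Lipschitz-in-space estimate for $\mathcal{U}^{(n)}_{\alpha,R}$ really does hold with a constant independent of $R$ and that it transfers cleanly to the weak gradient of the zero-extension without picking up a boundary contribution; one must check that $u^{(n)}_{\alpha,R}\in W^{1,p}_0(\mathcal{O}_R)$ combined with Lipschitz continuity on $\overline{\mathcal{O}}_R$ and vanishing on the boundary indeed yields a globally Lipschitz (hence $W^{1,\infty}(\mathbb{R}^n)$) extension, so that no singular gradient measure appears on $\partial\mathcal{O}_R$. A clean way to argue this is to approximate $u^{(n)}_{\alpha,R}$ by the penalized solutions used to prove Corollary \ref{str-sol-obs} and pass the a.e.\ gradient bound to the limit, or simply to invoke that a $W^{1,p}_0(\mathcal{O}_R)$ function with an $L^\infty$ gradient bound on $\mathcal{O}_R$ has zero-extension in $W^{1,p}(\mathbb{R}^n)$ with the same gradient bound a.e. Everything else is a routine use of the fact that $\mu$ and $\mu_n$ are probability measures, so the $L^\infty$ bounds in $x$ dominate all the $L^p(\mu)$ bounds uniformly.
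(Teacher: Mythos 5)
Your proof is correct and follows essentially the same route as the paper's: the $L^p$ part from the uniform sup-norm bound $0\le\mathcal{U}^{(n)}_{\alpha,R},\Theta^{(n)}\le\overline{\Theta}$, and the gradient part from the uniform Lipschitz constant $L_\mathcal{U}+L_\Theta$ combined with the fact that $\mu_n$ is a probability measure equivalent to Lebesgue measure on $\mathbb{R}^n$. Your extra care about the zero extension not producing a singular gradient on $\partial\mathcal{O}_R$ (resolved via $W^{1,p}_0$ membership and global Lipschitz continuity of the extension) is a point the paper passes over silently, but it is handled correctly.
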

\begin{proof}
Clearly we may think of $u^{(n)}_{\alpha,R}$ as a function defined on $[0,T]\times\mathcal{H}$. Then from Assumption \ref{ass-psi} and \eqref{reps1} it follows that ${u}^{(n)}_{\alpha,R}$ is bounded by $2\overline{\Theta}$ for all $(R,n,\alpha)\in(0,+\infty)\times\mathbb{N}\times(0,+\infty)$, uniformly in $(t,x)\in[0,T]\times\mathcal{H}$; i.e.~$\|{u}^{(n)}_{\alpha,R}(t)\|_{L^p(\mathcal{H},\mu)}\le2\overline{\Theta}$, $t\in[0,T]$. It is easy to see that
\begin{align}\label{bound00}
\big\|u^{(n)}_{\alpha,R}\big\|_{L^p(0,T;L^p(\mathcal{H},\mu))}=
\left(\int_0^T{\big\|u^{(n)}_{\alpha,R}(t)\big\|^p_{L^p(\mathcal{H},\mu)}dt}\right)^{\frac{1}{p}}\le 2\:\overline{\Theta}\:T^{\frac{1}{p}},\quad1\le p<+\infty.
\end{align}
Moreover, for all $(R,n,\alpha)\in(0,+\infty)\times\mathbb{N}\times(0,+\infty)$, $u^{(n)}_{\alpha,R}$ is Lipschitz in the space variable, uniformly with respect to $t\in[0,T]$, with Lipschitz constant lesser or equal than $L_\mathcal{U}+L_\Theta$. It follows that $\big\|Du^{(n)}_{\alpha,R}(t,x^{(n)})\big\|_{\mathcal{H}}=\big\|Du^{(n)}_{\alpha,R}(t,x^{(n)})\big\|_{\mathbb{R}^n}\le L_\mathcal{U}+L_\Theta$ for a.e.~$(t,x^{(n)})\in[0,T]\times\mathbb{R}^{n}$.
Since $\mu$ restricted to $\mathbb{R}^n$ is equivalent to the Lebesgue measure (cf.~Remark \ref{conn}) it follows that $\big\|Du^{(n)}_{\alpha,R}(t)\big\|_{L^p(\mathcal{H},\mu;\mathcal{H})}\le L_\mathcal{U}+L_\Theta$, $t\in[0,T]$ and
\begin{align}\label{bound01}
\big\|Du^{(n)}_{\alpha,R}\big\|_{L^p(0,T;L^p(\mathcal{H},\mu;\mathcal{H}))}=
\left(\int_0^T{\big\|Du^{(n)}_{\alpha,R}(t)\big\|^p_{L^p(\mathcal{H},\mu;\mathcal{H})}dt}\right)^{\frac{1}{p}}\le (L_\mathcal{U}+L_\Theta)T^{\frac{1}{p}},\quad1\le p<+\infty.
\end{align}
\end{proof}
We now go through a number of steps (including penalization) in order to find a bound on $\frac{\partial}{\partial\,t}\,u^{(n)}_{\alpha,R}$. First, by arguments as in \cite{Men80} we have
\begin{lemma}\label{bound-f}
Let $\nu$ be any real adapted process in $[0,1]$, $\varepsilon>0$, $t\in[0,T]$, $x^{(n)}$ and $y^{(n)}$ in $\mathbb{R}^n$, then
\begin{align}\label{bound-f00}
\left|\mathbb{E}\left\{\int^{\tau^x_R}_{\tau^x_R\wedge\tau^y_R}{e^{-\int^s_t{\frac{1}
{\varepsilon}\nu(u)du}}f_{\alpha,n}(s,X^{(\alpha)t,x;n}_s)ds}\right\}\right|\le L_f\big\|x^{(n)}-y^{(n)}\big\|_{\mathcal{H}}
\end{align}
where $\tau^x_R:=\tau_R(t,x)$ and $\tau^y_R:=\tau_R(t,y)$ (cf.~\eqref{tauR}) and $L_f>0$ only depends on $L_\Theta$ and $L_\mathcal{U}$ (cf.~Assumption \ref{ass-psi} and Proposition \ref{reg-v-fun}).
\end{lemma}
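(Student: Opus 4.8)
The plan is to estimate the left-hand side of \eqref{bound-f00} by writing $f_{\alpha,n}$ in terms of the infinitesimal generator and reducing to a difference of value-function-type expressions, exploiting the Lipschitz bounds already available. Recall from \eqref{gen-inf} that $f_{\alpha,n}=\frac{\partial\Theta^{(n)}}{\partial t}+\mathcal{L}_{\alpha,n}\Theta^{(n)}$. The key observation is that, by Dynkin's formula applied to $\Theta^{(n)}$ along the diffusion $X^{(\alpha)t,x;n}$ between two stopping times $\rho_1\le\rho_2$, one has
\begin{align*}
\mathbb{E}\left\{\int_{\rho_1}^{\rho_2}{e^{-\int_t^s{\frac{1}{\varepsilon}\nu(u)du}}f_{\alpha,n}(s,X^{(\alpha)t,x;n}_s)ds}\right\}
=\mathbb{E}\Big\{&e^{-\int_t^{\rho_2}{\frac{1}{\varepsilon}\nu(u)du}}\Theta^{(n)}(\rho_2,X^{(\alpha)t,x;n}_{\rho_2})-e^{-\int_t^{\rho_1}{\frac{1}{\varepsilon}\nu(u)du}}\Theta^{(n)}(\rho_1,X^{(\alpha)t,x;n}_{\rho_1})\\
&+\int_{\rho_1}^{\rho_2}{\tfrac{1}{\varepsilon}\nu(s)e^{-\int_t^s{\frac{1}{\varepsilon}\nu(u)du}}\Theta^{(n)}(s,X^{(\alpha)t,x;n}_s)ds}\Big\}.
\end{align*}
This identity turns the awkward integral into boundary terms plus a discounting term, all involving $\Theta^{(n)}$, which is bounded and Lipschitz by Assumption \ref{ass-psi}.

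First I would apply this with $\rho_1=\tau^x_R\wedge\tau^y_R$ and $\rho_2=\tau^x_R$. The crucial point is that on the interval $[\tau^x_R\wedge\tau^y_R,\tau^x_R]$ nothing special happens to the $X^x$-path endpoint at $\rho_2=\tau^x_R$ unless $\tau^x_R<T$, in which case $X^{(\alpha)t,x;n}_{\tau^x_R}\in\partial\mathcal{O}_R$; similarly at $\rho_1$ the path is at the boundary if the smaller exit time is strict. The strategy is \emph{not} to bound each term by $2\overline{\Theta}$ (that gives no $\|x^{(n)}-y^{(n)}\|$ factor) but rather to couple the two processes: by the time-homogeneity and the contraction estimate \eqref{lip} (in its Yosida/finite-dimensional version, cf.\ Remark \ref{unif}), $\mathbb{E}\{\sup_s\|X^{(\alpha)t,x;n}_s-X^{(\alpha)t,y;n}_s\|_{\mathcal{H}}\}\le C\|x^{(n)}-y^{(n)}\|_{\mathcal{H}}$. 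The set $\{\tau^x_R\wedge\tau^y_R<\tau^x_R\}$ is precisely where $\tau^y_R<\tau^x_R$, i.e.\ the $y$-path exits strictly before the $x$-path; on that event the $x$-path, at the moment the $y$-path hits $\partial\mathcal{O}_R$, is within distance $\sup_s\|X^x_s-X^y_s\|$ of $\partial\mathcal{O}_R$. Following Menaldi \cite{Men80}, one compares the contribution over $[\tau^x_R\wedge\tau^y_R,\tau^x_R]$ with the analogous expression for the value function $\mathcal{U}^{(n)}_{\alpha,R}$ (or $\mathcal{U}^{(n)}_\alpha$) evaluated at the two boundary configurations, invoking \eqref{dyn-prg-3} and the Lipschitz bound \eqref{lip-V} for $\mathcal{U}$ together with \eqref{psi2} for $\Theta$; the discounting factor $e^{-\int\frac1\varepsilon\nu}\le1$ only helps and the extra $\frac1\varepsilon\nu$-term is absorbed because it is a convex average of values of $\Theta^{(n)}$ at nearby points, again Lipschitz-controlled after the coupling.

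Concretely, the key steps in order are: (i) rewrite the integral via Dynkin/Itô as above; (ii) split according to whether $\tau^x_R\le\tau^y_R$ or $\tau^y_R<\tau^x_R$, noting the integral vanishes on the first event so only $\{\tau^y_R<\tau^x_R\}$ contributes; (iii) on that event, use the strong Markov property at time $\tau^y_R$ to write the remaining expectation as $\mathbb{E}\{\mathbf{1}_{\{\tau^y_R<\tau^x_R\}}\,g(\tau^y_R,X^{(\alpha)t,x;n}_{\tau^y_R})\}$ for a function $g$ built from $\mathcal{U}^{(n)}_{\alpha,R}-\Theta^{(n)}$-type differences, which is itself bounded and Lipschitz with constant controlled by $L_\mathcal{U}+L_\Theta$; (iv) observe that on $\{\tau^y_R<\tau^x_R\}$ one has $X^{(\alpha)t,y;n}_{\tau^y_R}\in\partial\mathcal{O}_R$ while $X^{(\alpha)t,x;n}_{\tau^y_R}$ is at distance at most $\sup_s\|X^{(\alpha)t,x;n}_s-X^{(\alpha)t,y;n}_s\|_\mathcal{H}$ from it, and $g$ vanishes on $\partial\mathcal{O}_R\times[0,T]$ so $|g(\tau^y_R,X^{(\alpha)t,x;n}_{\tau^y_R})|\le(L_\mathcal{U}+L_\Theta)\,\mathrm{dist}(X^{(\alpha)t,x;n}_{\tau^y_R},\partial\mathcal{O}_R)\le(L_\mathcal{U}+L_\Theta)\sup_s\|X^{(\alpha)t,x;n}_s-X^{(\alpha)t,y;n}_s\|_\mathcal{H}$; (v) take expectations and apply the coupling estimate \eqref{lip} (via Remark \ref{unif}) to get the bound $L_f\|x^{(n)}-y^{(n)}\|_\mathcal{H}$ with $L_f$ depending only on $L_\Theta,L_\mathcal{U}$ and the universal constant $C_{1,T}$.

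The main obstacle I anticipate is step (iii)–(iv): making precise that the post-$\tau^y_R$ contribution is governed by a function that both vanishes on $\partial\mathcal{O}_R$ and is globally Lipschitz uniformly in $(R,n,\alpha)$ and in the discount rate $\frac1\varepsilon\nu$. The discounting is not of the standard form (it is a general adapted process, not a constant or a Lipschitz function of state), so one cannot directly quote a value-function Lipschitz estimate; instead one must argue by hand that the discounted stopped functional is Lipschitz, using that $0\le e^{-\int\frac1\varepsilon\nu}\le1$ and that differences of the \emph{undiscounted} boundary data are Lipschitz, so the discounting only contracts. A careful bookkeeping of which boundary ($\partial\mathcal{O}_R$ versus terminal time $T$) the relevant path-endpoint lies on is needed so that the vanishing-on-the-boundary property of $\mathcal{U}^{(n)}_{\alpha,R}-\Theta^{(n)}$ (from \eqref{obs-1}, third line) can be invoked; on the terminal-time face the same quantity also vanishes (second line of \eqref{obs-1}), so in either case $g$ vanishes and the Lipschitz-distance-to-the-boundary trick applies. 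Everything else is a routine combination of Dynkin's formula, the strong Markov property, and the contraction estimate \eqref{lip}.
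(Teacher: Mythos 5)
Your outline has the right skeleton (an It\^o/Dynkin identity, a split on the event $\{\tau^y_R<\tau^x_R\}$, the vanishing of $\mathcal{U}^{(n)}_{\alpha,R}-\Theta^{(n)}$ on $\partial\mathcal{O}_R$ and at $t=T$, the Lipschitz bounds and the coupling estimate), but two of your key steps do not go through as stated. First, step (iii) asserts that the contribution over $[\tau^x_R\wedge\tau^y_R,\tau^x_R]$ can be written as $\mathbb{E}\{I_{\{\tau^y_R<\tau^x_R\}}\,g(\tau^y_R,X^{(\alpha)t,x;n}_{\tau^y_R})\}$ for a \emph{deterministic} function $g$ of the state. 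This is false in general: $\nu$ is an arbitrary adapted process, so the conditional expectation given $\mathcal{F}_{\tau^y_R}$ of $\int_{\tau^y_R}^{\tau^x_R}e^{-\int\frac{1}{\varepsilon}\nu}f_{\alpha,n}\,ds$ depends on the whole future of $\nu$ and is not a function of $(\tau^y_R,X^x_{\tau^y_R})$; the strong Markov property does not collapse it. (If you try to repair this by taking $\sup_{\nu}$ you land on $u^R_\varepsilon$ itself, whose Lipschitz property is Lemma \ref{lip-ue} --- which is proved \emph{using} the present lemma, so that route is circular.) The paper avoids the issue entirely: it applies It\^o not to $\Theta^{(n)}$ but to $e^{-\int_t^{\cdot}\frac{1}{\varepsilon}\nu}\,u^{(n)}_{\alpha,R}$, and uses only the pointwise inequalities from \eqref{obs-1}, namely $\frac{\partial u}{\partial t}+\mathcal{L}_{\alpha,n}u+f_{\alpha,n}\le0$ together with $u\ge0$ (so the discount term $\frac{1}{\varepsilon}\nu\,u$ has a sign and can simply be discarded). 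This yields a one\-/sided \emph{inequality}, not an identity, and then the vanishing of $u^{(n)}_{\alpha,R}$ at $\tau^x_R$ (for the $x$-path) and at $\tau^y_R$ (for the $y$-path), plus the Lipschitz constant $L_\Theta+L_\mathcal{U}$ and \eqref{lip}, give the upper bound. Relatedly, your hope that the $\frac{1}{\varepsilon}\nu\,\Theta^{(n)}$ term from Dynkin applied to $\Theta^{(n)}$ is ``absorbed'' by a convex-average/Lipschitz argument does not work: over $[\tau^x_R\wedge\tau^y_R,\tau^x_R]$ there is no companion $y$-expression to cancel against, and the bound $\int\frac{1}{\varepsilon}\nu e^{-\int\frac{1}{\varepsilon}\nu}\le1$ only yields an $O(\overline{\Theta})$ estimate, not an $O(\|x^{(n)}-y^{(n)}\|)$ one.

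Second, and independently, your argument at best produces one side of the estimate, whereas \eqref{bound-f00} is a bound on the absolute value. Because both defining inequalities of $u^{(n)}_{\alpha,R}$ point the same way, the supersolution comparison only gives
\begin{align*}
\mathbb{E}\left\{\int^{\tau^x_R}_{\tau^x_R\wedge\tau^y_R}e^{-\int^s_t\frac{1}{\varepsilon}\nu(u)du}
f_{\alpha,n}(s,X^{(\alpha)t,x;n}_s)\,ds\right\}\le (L_\Theta+L_\mathcal{U})\,C_{1,T}\,\big\|x^{(n)}-y^{(n)}\big\|_{\mathcal{H}},
\end{align*}
and nothing in your outline addresses the lower bound. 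The paper's proof introduces for this purpose the auxiliary function $v^R(t,x^{(n)}):=\inf_{t\le\tau\le T}\mathbb{E}\{\int_t^{\tau\wedge\tau_R}f_{\alpha,n}(s,X^{(\alpha)t,x;n}_s)ds\}$, which solves the reversed obstacle problem \eqref{obs-2} (so $v^R\le0$ and $\frac{\partial v}{\partial t}+\mathcal{L}_{\alpha,n}v+f_{\alpha,n}\ge0$), again vanishes on $\partial\mathcal{O}_R$ and at $T$, and is Lipschitz with a constant controlled by $L_\Theta$ and $L_\mathcal{U}$; running the same It\^o comparison with $v^R$ gives the matching lower bound. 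This auxiliary minimization problem is the missing ingredient you would need to add to complete the argument.
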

\begin{proof}
The proof is in the Appendix.
\end{proof}
Now we need to recall the penalization method used in \cite{Ben-Lio82}, Chapter 3, Section 2, to obtain existence and uniqueness results for parabolic variational inequalities as in our Proposition \ref{bl-str}. For fixed $(R,n,\alpha)$ we denote $u^{R}:=u^{(n)}_{\alpha,R}$ to simplify notation. In \cite{Ben-Lio82} $u^{R}$ is found in the limit as $\varepsilon\to0$ of functions $u^{R}_{\varepsilon}$ solving the penalized problem
\begin{align}\label{penal00}
\left\{
\begin{array}{l}
u^{R}_{\varepsilon}\in L^2(0,T;H^2(\mathcal{O}_R))\cap L^2(0,T;H^1_0(\mathcal{O}_R))\,;\:\:\:\frac{\partial}{\partial\,t}u^{R}_{\varepsilon}\in L^2(0,T;L^2(\mathcal{O}_R)); \\
\\
\frac{\partial\,u^{R}_{\varepsilon}}{\partial\,t}+\mathcal{L}_{\alpha,n}u^{R}_{\varepsilon}=-f_{\alpha,n}-
\frac{1}{\varepsilon}\big[-u^{R}_{\varepsilon}\big]^+,\quad\text{for a.e.~$(t,x^{(n)})\in[0,T]\times\mathcal{O}_R$}\\
\\
u^{R}_{\varepsilon}(T,x^{(n)})=0,\:\:\:\text{for $x^{(n)}\in\mathcal{O}_R$.}
\end{array}
\right.
\end{align}
From now on we consider the zero extension outside $\mathcal{O}_R$ of $u^R_\varepsilon$ which we still denote by $u^R_\varepsilon$. Then $u^R_\varepsilon$ may be represented as (cf.~\cite{Ben-Lio82}, Chapter 3, Section 4, Theorem 4.4)
\begin{align}\label{penal01}
u^{R}_{\varepsilon}(t,x^{(n)})=\sup_{\nu\in[0,1]}\mathbb{E}\left\{\int^{\tau^x_R}_t{e^{-\int^s_t{\frac{1}
{\varepsilon}\nu(u)du}}f_{\alpha,n}(s,X^{(\alpha)t,x;n}_s)\,ds}\right\},
\end{align}
where the supremum is taken over all real adapted stochastic processes $\nu\in[0,1]$. Lipschitz continuity of $u^R_\varepsilon$ in the space variable, uniformly with respect to time, follows by means of Lemma \ref{bound-f}. The proof is inspired by \cite{Men80} and it is contained in the Appendix \ref{bound-f}.
\begin{lemma}\label{lip-ue}
There exists a constant $L_P>0$ independent of $(\varepsilon,R,\alpha,n)$ such that
\begin{align}\label{penal02}
\big\|Du^{R}_{\varepsilon}(t,x^{(n)})\big\|_{\mathcal{H}}=\big\|Du^{R}_{\varepsilon}(t,x^{(n)})\big\|_{\mathbb{R}^n}
\le L_P\qquad\text{for a.e.~$(t,x^{(n)})\in[0,T]\times\mathbb{R}^n$.}
\end{align}
\end{lemma}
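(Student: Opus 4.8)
The plan is to derive the uniform gradient bound for the penalized solutions $u^R_\varepsilon$ directly from the stochastic representation formula \eqref{penal01} together with the Lipschitz estimate of Lemma \ref{bound-f}. Fix $(\varepsilon,R,\alpha,n)$, fix $t\in[0,T]$ and take two points $x^{(n)},y^{(n)}\in\mathbb{R}^n$. The goal is to show
\begin{align*}
\big|u^R_\varepsilon(t,x^{(n)})-u^R_\varepsilon(t,y^{(n)})\big|\le L_P\,\big\|x^{(n)}-y^{(n)}\big\|_{\mathcal{H}}
\end{align*}
with $L_P$ independent of all parameters; the bound on $\|Du^R_\varepsilon\|$ then follows exactly as in the proof of Proposition \ref{univ-est-sol}, by writing the directional derivative as a limit of difference quotients and invoking that $\mu$ restricted to $\mathbb{R}^n$ is equivalent to Lebesgue measure.

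For the Lipschitz estimate itself I would argue as follows. Using \eqref{penal01}, for any admissible control $\nu\in[0,1]$ write the difference of the two integrals over $[t,\tau^x_R]$ and $[t,\tau^y_R]$ respectively. Split this difference into two pieces: a piece integrated over the common interval $[t,\tau^x_R\wedge\tau^y_R]$, where the integrands differ only through the argument $X^{(\alpha)t,x;n}_s$ versus $X^{(\alpha)t,y;n}_s$, and a remainder integrated over the symmetric-difference interval between $\tau^x_R\wedge\tau^y_R$ and $\tau^x_R$ (resp. $\tau^y_R$), where one of the two representations contributes nothing. The first piece is controlled by the Lipschitz property \eqref{psi2}–\eqref{psi3} of $\Theta$ (hence of $f_{\alpha,n}$ in the appropriate weak sense — here one uses that the relevant quantities entering $f_{\alpha,n}$ that survive in the difference are Lipschitz with constants depending only on $L_\Theta$, $L'_\Theta$) combined with the flow Lipschitz estimate \eqref{lip} of Lemma \ref{lipSDE}, noting that the discount factor $e^{-\int_t^s\frac1\varepsilon\nu(u)\,du}\le 1$ uniformly in $\varepsilon$; the second piece is exactly the quantity estimated in Lemma \ref{bound-f}, which gives a bound $L_f\|x^{(n)}-y^{(n)}\|_{\mathcal{H}}$ independent of $(\varepsilon,R,\alpha,n)$. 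Taking the supremum over $\nu$ and then exchanging the roles of $x$ and $y$ yields the two-sided bound with $L_P$ depending only on $L_\Theta$, $L'_\Theta$, $L_\mathcal{U}$ and $T$.

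The delicate point — and the reason Lemma \ref{bound-f} was isolated beforehand — is the treatment of the boundary-stopping-time mismatch: the exit times $\tau^x_R$ and $\tau^y_R$ need not be close even when $x^{(n)}$ and $y^{(n)}$ are, so one cannot naively bound the ``extra'' interval by its length times $\|f_{\alpha,n}\|_\infty$ with a constant uniform in $R$, since $f_{\alpha,n}$ grows with $R$ through the unbounded drift term $\langle A_{\alpha,n}x,D\Theta^{(n)}\rangle$. Lemma \ref{bound-f} circumvents this by exploiting that on that extra interval the process starting from $x$ has already left $\mathcal{O}_R$ while the one from $y$ has not (or vice versa), which localizes the contribution near $\partial\mathcal{O}_R$ and allows the bound to be expressed in terms of $L_\Theta$ and $L_\mathcal{U}$ only via a dynamic-programming/martingale comparison — so here I simply invoke it as a black box. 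The remaining care is purely bookkeeping: making sure the discount factors, which differ between the two representations only on the extra interval, are uniformly bounded by $1$, and that Gronwall-type constants coming from \eqref{lip} depend only on $p=1$ and $T$. Once these are in place the constant $L_P:=L_f+L_\Theta\,C_{1,T}$ (or a similar explicit combination) works, and \eqref{penal02} follows.
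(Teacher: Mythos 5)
Your overall strategy (use the stochastic representation \eqref{penal01}, split at $\tau^x_R\wedge\tau^y_R$, and invoke Lemma \ref{bound-f} for the mismatch of exit times) is the right skeleton, and your treatment of the symmetric-difference piece is exactly what the paper does. However, there is a genuine gap in your treatment of the common interval $[t,\tau^x_R\wedge\tau^y_R]$. You claim that the difference of the integrands there is controlled because ``the relevant quantities entering $f_{\alpha,n}$ that survive in the difference are Lipschitz with constants depending only on $L_\Theta$, $L'_\Theta$.'' This is false: $f_{\alpha,n}$ contains the Yosida drift term $\langle A_{\alpha,n}x, D\Theta^{(n)}(t,x)\rangle_{\mathcal H}$, whose Lipschitz constant in $x$ is of order $\|A_{\alpha,n}\|_L\,(L_\Theta+\|D^2\Theta\|_L\|x\|)$ and therefore blows up both as $\alpha\to\infty$ (since $A$ is unbounded, $\|A_\alpha\|_L$ is not uniformly bounded) and as $R\to\infty$ (through $\|x\|$ on $\mathcal O_R$). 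So the estimate
\begin{equation*}
\big|f_{\alpha,n}(s,X^{(\alpha)t,x;n}_s)-f_{\alpha,n}(s,X^{(\alpha)t,y;n}_s)\big|\le C\,\big\|X^{(\alpha)t,x;n}_s-X^{(\alpha)t,y;n}_s\big\|_{\mathcal H}
\end{equation*}
with $C$ independent of $(\varepsilon,R,\alpha,n)$ simply does not hold, and the common-interval piece cannot be closed by \eqref{psi2}--\eqref{psi3} plus \eqref{lip} alone. You correctly diagnose this unboundedness as the obstruction on the \emph{extra} interval, but it is equally lethal on the common interval.

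The paper's proof removes this obstruction by an additional ``integration by parts'': applying It\^o's formula to $s\mapsto e^{-\int_t^s\frac{1}{\varepsilon}\nu(u)du}\,\Theta^{(n)}(s,X^{(\alpha)t,x;n}_s)$ converts the whole integral $\int e^{-\int\frac{1}{\varepsilon}\nu}f_{\alpha,n}\,ds$ into $\Theta^{(n)}(t,x^{(n)})$, terminal terms $e^{-\int\frac{1}{\varepsilon}\nu}\Theta^{(n)}(\tau_R,\cdot)$, and running terms $\frac{1}{\varepsilon}\nu(s)\Theta^{(n)}(s,\cdot)$ (cf.\ \eqref{lip-ue01}--\eqref{lip-ue03}). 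Every object appearing after this step involves only $\Theta^{(n)}$ itself, which \emph{is} uniformly Lipschitz with constant $L_\Theta$; the drift term $\langle A_{\alpha,n}x,D\Theta^{(n)}\rangle$ never has to be compared at two different space points. Lemma \ref{bound-f} then enters only to control the residual terminal/discount terms on $[\tau^x_R\wedge\tau^y_R,\tau^x_R]$, together with the choice $\nu'=\nu$ on the common interval and $\nu'=0$ afterwards. To repair your argument you would need to perform this same transformation before splitting the integral; as written, the proposal does not yield a constant independent of $\alpha$, $n$ and $R$.
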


In order to get bounds in $L^p(\mathcal{H},\mu)$ it is convenient to find a formulation of \eqref{penal00} in such space. To do so we introduce some notation (cf.~Remark \ref{conn}).
\begin{definition}\label{b-sp}
For $1< p<\infty$ and $p'$ such that $\frac{1}{p}+\frac{1}{p'}=1$, denote by $\mathcal{V}^p_n$ the space
\begin{equation}\label{b-sp1}
\mathcal{V}^p_n:=\{v\,:\,v\in L^{2p}(\mathbb{R}^n,\mu_n)\:\textit{and}\:\:D v\in L^{2p'}(\mathbb{R}^n,\mu_n;\mathbb{R}^n)\}
\end{equation}
endowed with the norm
\begin{eqnarray}
\lnr v\rnr_{p,n}:=\|v\|_{L^{2p}(\mathbb{R}^n,\mu_n)}+\|D v\|_{L^{2p'}(\mathbb{R}^n,\mu_n;\mathbb{R}^n)}.
\end{eqnarray}
\end{definition}
\noindent Then $(\mathcal{V}^p_n,\lnr\cdot\rnr_{p,n})$ is a separable Banach space.

Denote by $(\,\cdot\,,\,\cdot\,)_{\mu_n}$ the scalar product in $L^2(\mathbb{R}^n,\mu_n)$ and, for $u,w\in\mathcal{V}^p_n$, define the bilinear form associated to the operator $\mathcal{L}_{\alpha,n}$ (cf.\ \eqref{rullo}),
\begin{align}\label{lup1}
a^{(\alpha,n)}_{\mu}&(u,w):=-\int_{\mathbb{R}^n}{\mathcal{L}_{\alpha,n}u\,w\mu_n(dx)}=
\sum_{i,j=1}^n{\left(\int_{\mathbb{R}^n}\frac{1}{2}B^{(n)}_{i,j}\,\frac{\partial u}{\partial x_i}\frac{\partial w}{\partial x_j}\mu_n(dx)+\hspace{-4pt}\int_{\mathbb{R}^n}\hspace{-6pt}\overline{C}^{(n,\alpha)}_{i,j}\frac{\partial u}{\partial x_i} w\,\mu_n(dx)\right)}
\end{align}
with
\begin{align}\label{bil-coef01}
\hspace{-6pt}\overline{C}^{(n,\alpha)}_{i,j}(x):=C^{(n,\alpha)}_{i,j}(x)-\frac{1}{2}\left(
[\sigma^{(n)}\sigma^{(n)*}]_{i,j}\,\frac{x_j}{\lambda_j}+\epsilon_n^2\,\delta_{i,j}\,\frac{x_j}{\lambda_j}\right)
\end{align}
and $B^{(n)}_{i,j}$ and $C^{(n,\alpha)}_{i,j}$ as in \eqref{bil-coef00}. From \eqref{sig1} it follows that
\begin{align}\label{bra}
\hspace{-5pt}\frac{\partial}{\partial x_j}[\sigma^{(n)}\sigma^{(n)*}(x)]_{i,j}\hspace{-1pt} =&\langle D\sigma^{(n)}(x)\varphi_j,\varphi_i\rangle_{\mathcal{H}} \langle\sigma^{(n)}(x),\varphi_j\rangle_{\mathcal{H}} + \hspace{-1pt}\langle  D\sigma^{(n)}(x)\varphi_j,\varphi_j\rangle_{\mathcal{H}} \langle\sigma^{(n)}(x),\varphi_i\rangle_{\mathcal{H}},
\end{align}
then \eqref{bra} and the isometry $\mathcal{H}^{(n)}\sim\mathbb{R}^n$ allow us to rewrite the bilinear form \eqref{lup1} as
\begin{align}\label{lup2}
a^{(\alpha,n)}_{\mu}&(u,w):=\int_{\mathbb{R}^n}\frac{1}{2}\langle B^{(n)}Du,D w\rangle _\mathcal{H}\,\mu_n(dx)+\hspace{-4pt}\int_{\mathbb{R}^n}\hspace{-6pt}\langle\, \overline{C}^{(n,\alpha)},D u\rangle_{\mathcal{H}}\, w\,\mu_n(dx)
\end{align}
where $B^{(n)}:=\sigma^{(n)}\sigma^{(n)*}+\epsilon_n^2\,I\in\mathcal{L}(\mathcal{H})$, the set of all linear operators on $\mathcal{H}$, and $\overline{C}^{(n,\alpha)}\in\mathcal{H}$ is given by
\begin{align*}
\overline{C}^{(n,\alpha)}:=\frac{1}{2}\left(Tr[D\sigma^{(n)}]_{\mathcal{H}}\sigma^{(n)}+D\sigma^{(n)}\cdot\sigma^{(n)}-
2A_{\alpha,n}x-\sigma^{(n)}\sigma^{(n)*}Q_n^{-1}x-\epsilon_n^2\,Q_n^{-1}x\right).
\end{align*}
Here $Q_n:=P_nQP_n$ and $(D\sigma^{(n)}\cdot\sigma^{(n)})_i:=\sum_{j=1}^n{(D\sigma^{(n)})_{i,j}\,\sigma^{(n)}_j}$, $i=1,\ldots,n$. The continuity in $\mathcal{V}^p_n$ of the bilinear form \eqref{lup2} follows from the next result which makes use of Assumption \ref{ass-Q}.
\begin{theorem}\label{uni-est}
For every $1<p<\infty$ there exists a constant $C_{\mu,\gamma,p}>0$, depending only on $\mu$, $p$ and the bounds of $\gamma$ in Assumption \ref{ass-sigma}, such that
\begin{equation}\label{univ}
\int_0^T{\hspace{-5pt}|a^{(\alpha,n)}_\mu(u(t),w(t))| dt}\leq C_{\mu,\gamma,p}\left(\int_0^T{\lnr u(t)\rnr^2_{p,n}}dt\right)^\frac{1}{2}\left(\int_0^T{\lnr w(t)\rnr^2_{p,n}}dt\right)^\frac{1}{2}
\end{equation}
for all $u,w\in L^2(0,T;\mathcal{V}^p_n)$.
\end{theorem}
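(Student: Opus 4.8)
The plan is to establish the pointwise-in-$t$ estimate $\big|a^{(\alpha,n)}_\mu(u(t),w(t))\big|\le C_{\mu,\gamma,p}\,\lnr u(t)\rnr_{p,n}\,\lnr w(t)\rnr_{p,n}$ with $C_{\mu,\gamma,p}$ independent of $(\alpha,n)$, and then to deduce \eqref{univ} by the Cauchy--Schwarz inequality on $L^2(0,T)$. Working from the representation \eqref{lup2}, the mechanism that produces the $\mathcal{V}^p_n$-norms is the three-factor Hölder inequality with exponents $2,\,2p,\,2p'$, which is available precisely because $\tfrac12+\tfrac1{2p}+\tfrac1{2p'}=1$ (this is where $1<p<\infty$ enters): it gives $\big|\int_{\mathbb{R}^n}\langle\overline{C}^{(n,\alpha)},Du\rangle_{\mathcal{H}}\,w\,\mu_n(dx)\big|\le\|\overline{C}^{(n,\alpha)}\|_{L^2(\mathbb{R}^n,\mu_n;\mathcal{H})}\,\|Du\|_{L^{2p'}(\mathbb{R}^n,\mu_n;\mathcal{H})}\,\|w\|_{L^{2p}(\mathbb{R}^n,\mu_n)}$, while for the diffusion part $\big|\int_{\mathbb{R}^n}\tfrac12\langle B^{(n)}Du,Dw\rangle_{\mathcal{H}}\,\mu_n(dx)\big|\le\tfrac12\big(\sup_x\|B^{(n)}(x)\|_{L}\big)\,\|Du\|_{L^2}\,\|Dw\|_{L^2}$; since $\mu_n$ is a probability measure, $\|Du\|_{L^2}\le\|Du\|_{L^{2p'}}\le\lnr u\rnr_{p,n}$ and $\|w\|_{L^{2p}}\le\lnr w\rnr_{p,n}$, so both contributions are $\le C\,\lnr u\rnr_{p,n}\lnr w\rnr_{p,n}$ as soon as $\sup_x\|B^{(n)}(x)\|_{L}$ and $\|\overline{C}^{(n,\alpha)}\|_{L^2(\mathbb{R}^n,\mu_n;\mathcal{H})}$ are bounded uniformly in $(\alpha,n)$. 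Establishing these two uniform bounds is the whole content.

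For $B^{(n)}=\sigma^{(n)}\sigma^{(n)*}+\epsilon_n^2 I$ the uniform bound is immediate from Assumption \ref{ass-sigma}, since $\sigma^{(n)}=(P_n\sigma)\circ P_n=Q_n\gamma^{(n)}$ (with $\gamma^{(n)}:=(P_n\gamma)\circ P_n$) is bounded by $\|Q\|_{L}\|\gamma\|_\infty$ and $\sup_n\epsilon_n^2<\infty$. For the drift vector I split $\overline{C}^{(n,\alpha)}=\overline{C}^{(n)}_0-A_{\alpha,n}x-\tfrac12\sigma^{(n)}\sigma^{(n)*}Q_n^{-1}x-\tfrac12\epsilon_n^2Q_n^{-1}x$, where $\overline{C}^{(n)}_0:=\tfrac12\big(Tr[D\sigma^{(n)}]_{\mathcal{H}}\,\sigma^{(n)}+D\sigma^{(n)}\cdot\sigma^{(n)}\big)$, and bound the four pieces in $L^2(\mathbb{R}^n,\mu_n;\mathcal{H})$ separately. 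The piece $\overline{C}^{(n)}_0$ is bounded uniformly because $Q$ is trace-class and $\gamma,D\gamma$ are bounded (so $|Tr[D\sigma^{(n)}]_{\mathcal{H}}|\le\|D\gamma\|_\infty Tr[Q]$ and $\|D\sigma^{(n)}(x)\|_{L}\le\|Q\|_{L}\|D\gamma\|_\infty$, with $\|\sigma^{(n)}\|_{\mathcal{H}}$ bounded as above). For $\sigma^{(n)}\sigma^{(n)*}Q_n^{-1}x$ I exploit the rank-one structure together with $\sigma^{(n)}=Q_n\gamma^{(n)}$: since $\langle\sigma^{(n)}(x),Q_n^{-1}x\rangle_{\mathcal{H}}=\langle\gamma^{(n)}(x),x\rangle_{\mathcal{H}}$, one has $\sigma^{(n)}\sigma^{(n)*}Q_n^{-1}x=\langle\gamma^{(n)}(x),x\rangle_{\mathcal{H}}\,\sigma^{(n)}(x)$, hence $\|\sigma^{(n)}\sigma^{(n)*}Q_n^{-1}x\|_{\mathcal{H}}\le\|Q\|_{L}\|\gamma\|_\infty^2\|x\|_{\mathcal{H}}$, which is in $L^2(\mathbb{R}^n,\mu_n)$ uniformly in $n$ because $\int_{\mathbb{R}^n}\|x\|_{\mathcal{H}}^2\mu_n(dx)=\sum_{i=1}^n\lambda_i\le Tr[Q]$. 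The last piece satisfies $\|\epsilon_n^2Q_n^{-1}x\|_{L^2(\mathbb{R}^n,\mu_n;\mathcal{H})}=\epsilon_n^2\big(\sum_{i=1}^n\lambda_i^{-1}\big)^{1/2}$, which I keep bounded by choosing the normalising sequence $(\epsilon_n)_n$ with $\sup_n\epsilon_n^2\sum_{i=1}^n\lambda_i^{-1}<\infty$ (compatible with, and slightly stronger than, condition \eqref{qubo}). If $\gamma$ merely has linear growth the same estimates hold with the $L^\infty$-bounds replaced by the corresponding Gaussian $L^q$-norms of $\|x\|_{\mathcal{H}}$, which are uniformly bounded in $n$.

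The substantive point is the drift term $A_{\alpha,n}x$, where Assumption \ref{ass-Q} and the Yosida regularisation interact and where I expect the only real difficulty. Since under $\mu_n$ the variable $x$ is centred Gaussian with covariance $Q_n$, $\int_{\mathbb{R}^n}\|A_{\alpha,n}x\|_{\mathcal{H}}^2\mu_n(dx)=Tr[A_{\alpha,n}Q_nA_{\alpha,n}^*]=\|A_{\alpha,n}Q_n^{1/2}\|_{HS}^2$, with $\|\cdot\|_{HS}$ the Hilbert--Schmidt norm. As the $\varphi_i$ are $Q$-eigenvectors, $P_n$ and $Q$ commute, so $Q_n^{1/2}=Q^{1/2}P_n=P_nQ^{1/2}$, $A_{\alpha,n}Q_n^{1/2}=P_nA_\alpha Q^{1/2}P_n$, and, projections being norm-non-increasing for $\|\cdot\|_{HS}$, $\|A_{\alpha,n}Q_n^{1/2}\|_{HS}\le\|A_\alpha Q^{1/2}\|_{HS}$. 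Writing $J_\alpha:=\alpha(\alpha I-A)^{-1}$ and using $A_\alpha=\alpha A(\alpha I-A)^{-1}=J_\alpha A$ on $D(A)$ together with $\mathrm{ran}(Q^{1/2})\subset D(A)$ — implicit in Assumption \ref{ass-Q}, as $Tr[AQA^*]=\|AQ^{1/2}\|_{HS}^2$ — we obtain $A_\alpha Q^{1/2}=J_\alpha(AQ^{1/2})$, hence $\|A_\alpha Q^{1/2}\|_{HS}\le\|J_\alpha\|_{L}(Tr[AQA^*])^{1/2}$; and $\sup_{\alpha>\alpha_0}\|J_\alpha\|_{L}<\infty$ for a suitable $\alpha_0\ge0$ by the Hille--Yosida resolvent estimate $\|(\alpha I-A)^{-1}\|_{L}\le M/(\alpha-\omega)$ for the generator of a $C_0$-semigroup with $\|S(t)\|_{L}\le Me^{\omega t}$, which suffices since $\alpha\to\infty$ in the sequel. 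Collecting the four bounds gives $\|\overline{C}^{(n,\alpha)}\|_{L^2(\mathbb{R}^n,\mu_n;\mathcal{H})}\le C$ uniformly in $(\alpha,n)$, whence the pointwise estimate; integrating in $t$ and applying Cauchy--Schwarz on $L^2(0,T)$ yields \eqref{univ}, with a constant depending only on $p$, the bounds on $\gamma$ and $D\gamma$, $Tr[Q]$, $Tr[AQA^*]$ and $\sup_{\alpha>\alpha_0}\|J_\alpha\|_{L}$. Everything except the commutation $A_\alpha Q^{1/2}=J_\alpha(AQ^{1/2})$ on $\mathrm{ran}(Q^{1/2})\subset D(A)$ and the $\alpha$-uniform resolvent bound is routine bookkeeping with bounded (or Gaussian-integrable) coefficients and the exact $\tfrac12+\tfrac1{2p}+\tfrac1{2p'}=1$ Hölder split.
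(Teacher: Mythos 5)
Your proof is correct and follows essentially the same route as the paper's: the pointwise H\"older split with exponents $2$, $2p$, $2p'$, direct bounds on the $\sigma$-terms, an extra smallness condition on $(\epsilon_n)_n$ to control $\epsilon_n^2Q_n^{-1}$, and the Gaussian moment identity $\int_{\mathbb{R}^n}\|A_{\alpha,n}x\|^2_{\mathcal{H}}\,\mu_n(dx)=Tr[A_{\alpha,n}Q_nA_{\alpha,n}^*]$ combined with Assumption \ref{ass-Q} for the drift term. Your justification of the uniform bound on $Tr[A_{\alpha,n}Q_nA_{\alpha,n}^*]$ via the factorization $A_\alpha Q^{1/2}=J_\alpha(AQ^{1/2})$ and the Hille--Yosida resolvent estimate is in fact more explicit than the paper's, which simply asserts the bound $Tr[AQA^*]$ (literally valid only when $\|J_\alpha\|_{L}\le 1$, e.g.\ for contraction semigroups; in general one picks up the harmless factor $\sup_{\alpha>\alpha_0}\|J_\alpha\|^2_{L}$, exactly as you do).
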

\begin{proof}
Thanks to Assumption \ref{ass-sigma} and since $Q$ is of trace class (cf.~\eqref{bari}) the estimate is straightforward for all the terms in \eqref{lup2} except those involving $\epsilon^2_nQ^{-1}_n$ and $A_{\alpha,n}$. As for the first case notice that, although $Q^{-1}_n$ becomes unbounded as $n\to\infty$, there is no restriction in assuming that the sequence $(\epsilon_n)_{n\in\mathbb{N}}$ is such that $\epsilon_n Q^{-1}_n\to0$ as $n\to\infty$ (cf.~\eqref{qubo}). It then remains to look at
\begin{eqnarray}
(I):\hspace{-8pt}&=&\hspace{-8pt}\bigg|\int_{\mathbb{R}^n}{\langle A_{\alpha,n}x,Du\rangle_{\mathcal{H}}\, w\,  \mu_n(dx)}\bigg|.\label{(I)}
\end{eqnarray}
Recalling Assumption \ref{ass-Q} and using H\"older's inequality we obtain
\begin{align}\label{stimaA}
(I)\leq&\left(\int_{\mathbb{R}^n}\|A_{\alpha,n} x\|^2_{\mathcal{H}}\mu_n(dx)\right)^{\frac{1}{2}}\left(\int_{\mathbb{R}^n}\|D u\|^2_{\mathcal{H}}| w|^2 \mu_n(dx)\right)^{\frac{1}{2}}\\
\le&\left(\sum^{n}_{j=1}\int_{\mathbb{R}^n}\big|\langle x,A^*_{\alpha,n}\varphi_j\rangle\big|^2_{\mathcal{H}}\mu_n(dx)\right)^{\frac{1}{2}}\lnr u\rnr_{p,n}\lnr w\rnr_{p,n}\le\left(Tr[AQA^*]\right)^{\frac{1}{2}}\lnr u\rnr_{p,n}\lnr w\rnr_{p,n}\,\,,\nonumber
\end{align}
where the last inequality follows from $\int_{\mathbb{R}^n}\big|\langle x,y\rangle\big|^2_{\mathcal{H}}\mu_n(dx)=\langle Q_ny,y\rangle_{\mathcal{H}}$ for $y\in\mathcal{H}$ (see for instance \cite{DaPr}, p.13).
\end{proof}

For $v^R\in H^1_0(\mathcal{O}_R)$ we consider its zero extension outside $\mathcal{O}_R$, again denoted by $v^R$. Multiplying \eqref{penal00} by $v^R\,\frac{1}{\sqrt{(2\pi)^n\lambda_1\lambda_2\cdots\lambda_n}}\exp{
\left(-\sum_{i=1}^n{\frac{x_i^2}{\lambda_i}}\right)}$ and integrating by parts over $\mathbb{R}^n$ gives the penalized problem in a weaker form; that is
\begin{align}\label{penal03}
-\big(\frac{\partial\,u^R_\varepsilon}{\partial\,t}(t),v^R\big)_{\mu_n}+a^{(\alpha,n)}_\mu(u^R_\varepsilon(t),v^R)-
\frac{1}{\varepsilon}\big(\big[-u^R_\varepsilon(t)\big]^+,v^R\big)_{\mu_n}=\big(f_{\alpha,n}(t),v^R\big)_{\mu_n}\quad t\in[0,T].
\end{align}
Following arguments as in \cite{Ben-Lio82}, Chapter 3, Section 2, p.~246, we finally obtain a bound on $\frac{\partial}{\partial\,t}u^{(n)}_{\alpha,R}$.
\begin{prop}\label{bound-ddt}
The family $\big(\frac{\partial}{\partial\,t}\,u^{(n)}_{\alpha,R}\big)_{R,n,\alpha}$ is bounded in $L^2(0,T;L^2(\mathcal{H},\mu))$, uniformly with respect to $(R,n,\alpha)\in(0,\infty)\times\mathbb{N}\times(0,\infty)$.
\end{prop}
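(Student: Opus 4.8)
The plan is to test the penalized equation \eqref{penal03} against a suitable multiple of $\frac{\partial}{\partial t}u^R_\varepsilon$ (the standard energy argument of \cite{Ben-Lio82}, Chapter 3, Section 2), exploit the uniform bounds already available, and then pass to the limit $\varepsilon\to0$ to recover a bound on $\frac{\partial}{\partial t}u^{(n)}_{\alpha,R}$ that does not depend on $(R,n,\alpha)$. Concretely, first I would take $v^R=\frac{\partial}{\partial t}u^R_\varepsilon(t)$ in \eqref{penal03}. The penalization term has the right sign: since $\frac{d}{dt}\big[-u^R_\varepsilon\big]^+=-\mathds{1}_{\{u^R_\varepsilon<0\}}\frac{\partial}{\partial t}u^R_\varepsilon$ a.e., one gets $-\frac1\varepsilon\big(\big[-u^R_\varepsilon(t)\big]^+,\frac{\partial}{\partial t}u^R_\varepsilon(t)\big)_{\mu_n}=\frac{1}{2\varepsilon}\frac{d}{dt}\big\|\big[-u^R_\varepsilon(t)\big]^+\big\|^2_{L^2(\mathcal H,\mu)}$, which after integration in time over $[0,T]$ yields a non-negative boundary contribution (recall $u^R_\varepsilon(T)=0$) and can be dropped. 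This is exactly the mechanism that makes the penalization term harmless here.

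Next I would integrate the identity over $[0,T]$ and estimate the remaining three terms. The term $-\int_0^T(\frac{\partial}{\partial t}u^R_\varepsilon,\frac{\partial}{\partial t}u^R_\varepsilon)_{\mu_n}dt$ is $-\big\|\frac{\partial}{\partial t}u^R_\varepsilon\big\|^2_{L^2(0,T;L^2(\mathcal H,\mu))}$ and is the quantity we want to control. The source term $\int_0^T(f_{\alpha,n}(t),\frac{\partial}{\partial t}u^R_\varepsilon(t))_{\mu_n}dt$ is handled by Cauchy--Schwarz and then Young's inequality, absorbing $\frac14\big\|\frac{\partial}{\partial t}u^R_\varepsilon\big\|^2_{L^2(0,T;L^2(\mathcal H,\mu))}$ into the left-hand side; here one needs $f_{\alpha,n}$ bounded in $L^2(0,T;L^2(\mathcal H,\mu))$ uniformly in $(R,n,\alpha)$, which follows from Assumption \ref{ass-psi} (recall the consequence $\int_0^T\|\partial_t\Theta(t)\|^2_{L^2}dt<C_\Theta$ together with the boundedness of $D\Theta$, $D^2\Theta$), from Assumption \ref{ass-sigma}, and from Assumption \ref{ass-Q} applied exactly as in the estimate \eqref{stimaA} of Theorem \ref{uni-est} to control the $\langle A_{\alpha,n}x,D\Theta^{(n)}\rangle$ term. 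The bilinear-form term $\int_0^T a^{(\alpha,n)}_\mu(u^R_\varepsilon(t),\frac{\partial}{\partial t}u^R_\varepsilon(t))\,dt$ is the delicate one; following \cite{Ben-Lio82} one splits it into the symmetric principal part, whose contribution is $\frac12\frac{d}{dt}$ of a coercive quadratic form in $Du^R_\varepsilon$ (integrating in time gives a controlled boundary term since $u^R_\varepsilon(T)=0$ and $u^R_\varepsilon(0)$ is bounded in $W^{1,p}$ by the uniform Lipschitz bound of Lemma \ref{lip-ue}), plus lower-order terms estimated via Cauchy--Schwarz using Theorem \ref{uni-est}, Proposition \ref{univ-est-sol} and Lemma \ref{lip-ue}, with another application of Young's inequality to absorb a further fraction of $\big\|\frac{\partial}{\partial t}u^R_\varepsilon\big\|^2_{L^2(0,T;L^2(\mathcal H,\mu))}$. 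Since all the structural constants appearing ($L_\Theta$, $L'_\Theta$, the bounds on $\gamma$, $Tr[AQA^*]$, $T$, $L_P$, $C_{\mu,\gamma,p}$) are independent of $(R,n,\alpha)$, so is the resulting bound.

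Finally, the bound thus obtained on $\frac{\partial}{\partial t}u^R_\varepsilon$ is uniform in $\varepsilon$ as well, so along a subsequence $\frac{\partial}{\partial t}u^R_\varepsilon\rightharpoonup\frac{\partial}{\partial t}u^{(n)}_{\alpha,R}$ weakly in $L^2(0,T;L^2(\mathcal H,\mu))$ as $\varepsilon\to0$ (the identification of the weak limit with $\frac{\partial}{\partial t}u^{(n)}_{\alpha,R}$ uses the convergence $u^R_\varepsilon\to u^{(n)}_{\alpha,R}$ from \cite{Ben-Lio82}), and weak lower semicontinuity of the norm transfers the bound to the limit. I expect the main obstacle to be the careful treatment of the bilinear-form term $\int_0^T a^{(\alpha,n)}_\mu(u^R_\varepsilon,\frac{\partial}{\partial t}u^R_\varepsilon)\,dt$: one must organize the splitting into a total time-derivative of a coercive form plus genuinely lower-order remainders, check that the time-derivative part produces only controlled data at $t=0$ and $t=T$, and verify that the drift piece carrying the unbounded operator $A_{\alpha,n}$ is tamed uniformly in $\alpha$ — which is precisely where Assumption \ref{ass-Q} and the argument behind \eqref{stimaA} are indispensable.
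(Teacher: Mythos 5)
Your proposal is correct and follows essentially the same route as the paper: test the penalized equation with $v^R=\frac{\partial}{\partial t}u^R_\varepsilon$, absorb the penalization term as a total time derivative with favourable sign, split $a^{(\alpha,n)}_\mu$ into its symmetric principal part (integrated exactly in time, with boundary terms controlled via Lemma \ref{lip-ue}) plus the drift part controlled uniformly through Assumption \ref{ass-Q} as in \eqref{stimaA}, and finally transfer the $\varepsilon$-uniform bound to $\frac{\partial}{\partial t}u^{(n)}_{\alpha,R}$ by weak convergence. The only cosmetic difference is that you close the estimate with Young's inequality while the paper solves the resulting quadratic inequality in $\big\|\frac{\partial}{\partial t}u^R_\varepsilon\big\|_{L^2(0,T;L^2(\mathcal{H},\mu))}$; these are equivalent.
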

\begin{proof}
As in \cite{Ben-Lio82} one may take $v^R=\frac{\partial}{\partial\,t}u^R_\varepsilon$, possibly up to a regularization, or considering finite differences, as the estimate obtained at the end does not involve second derivatives of $u^R_\varepsilon$ and it is therefore consistent. Plugging such $v^R$ in \eqref{penal03} gives
\begin{align}\label{penal04}
-\Big\|\frac{\partial\,u^R_\varepsilon}{\partial\,t}\Big\|^2_{L^2(\mathcal{H},\mu)}(t)
+a^{(\alpha,n)}_\mu(u^R_\varepsilon,\frac{\partial\,u^R_\varepsilon}{\partial\,t})(t)+
\frac{1}{2\varepsilon}\frac{\partial}{\partial\,t}\big\|\big[-u^R_\varepsilon\big]^+\big\|^2_{L^2(\mathcal{H},\mu)}(t)
=\big(f_{\alpha,n},\frac{\partial\,u^R_\varepsilon}{\partial\,t}\big)_{\mu_n}(t).
\end{align}
Next observe that \eqref{lup2} implies
\begin{align}\label{penal05}
a^{(\alpha,n)}_\mu(u^R_\varepsilon,\frac{\partial\,u^R_\varepsilon}{\partial\,t})=
a^{(\alpha,n)}_{\mu,0}(u^R_\varepsilon,\frac{\partial\,u^R_\varepsilon}{\partial\,t})+
a^{(\alpha,n)}_{\mu,1}(u^R_\varepsilon,\frac{\partial\,u^R_\varepsilon}{\partial\,t})
\end{align}
where
\begin{align}\label{penal06}
a^{(\alpha,n)}_{\mu,0}(u^R_\varepsilon,\frac{\partial\,u^R_\varepsilon}{\partial\,t})=&\int_{\mathbb{R}^n}\frac{1}{2}
\langle B^{(n)}Du^R_\varepsilon,\frac{\partial}{\partial\,t}D u^R_\varepsilon\rangle _\mathcal{H}\,\mu_n(dx)\\
=&\frac{\partial}{\partial\,t}\int_{\mathbb{R}^n}
\langle B^{(n)}Du^R_\varepsilon,D u^R_\varepsilon\rangle _\mathcal{H}\,\mu_n(dx)=2\,\frac{\partial}{\partial\,t}a^{(\alpha,n)}_{\mu,0}
(u^R_\varepsilon,u^R_\varepsilon),\nonumber
\end{align}
by symmetry and
\begin{align}\label{penal07}
a^{(\alpha,n)}_{\mu,1}(u^R_\varepsilon,\frac{\partial\,u^R_\varepsilon}{\partial\,t})=\int_{\mathbb{R}^n}\hspace{-6pt}\langle\, \overline{C}^{(n,\alpha)},D u^R_\varepsilon\rangle_{\mathcal{H}}\, \frac{\partial\,u^R_\varepsilon}{\partial\,t}\,\mu_n(dx).
\end{align}
By integrating with respect to $t$ over $[0,T]$, recalling that $u^R_\varepsilon(T,\,\cdot\,)=0$ and rearranging terms one obtains
\begin{align}\label{penal08}
&\Big\|\frac{\partial\,u^R_\varepsilon}{\partial\,t}\Big\|^2_{L^2(0,T;L^2(\mathcal{H},\mu))}+
2a^{(\alpha,n)}_{\mu,0}
\big(u^R_\varepsilon(0),u^R_\varepsilon(0)\big)+
\frac{1}{2\varepsilon}\big\|\big(-u^R_\varepsilon\big)^+\big\|^2_{L^2(\mathcal{H},\mu)}(0)\\
&=-\int^T_0\big(f_{\alpha,n},\frac{\partial\,u^R_\varepsilon}{\partial\,t}\big)_{\mu_n}(t)d\,t+2a^{(\alpha,n)}_{\mu,0}
\big(u^R_\varepsilon(T),u^R_\varepsilon(T)\big)+\int^T_0a^{(\alpha,n)}_{\mu,1}
(u^R_\varepsilon,\frac{\partial\,u^R_\varepsilon}{\partial\,t})(t)d\,t\nonumber
\end{align}
and therefore
\begin{align}\label{penal09}
&\Big\|\frac{\partial\,u^R_\varepsilon}{\partial\,t}\Big\|^2_{L^2(0,T;L^2(\mathcal{H},\mu))}\\
&\le
\left|\int^T_0\big(f_{\alpha,n},\frac{\partial\,u^R_\varepsilon}{\partial\,t}\big)_{\mu_n}(t)d\,t\right|+2\left|
a^{(\alpha,n)}_{\mu,0}
\big(u^R_\varepsilon(T),u^R_\varepsilon(T)\big)\right|+\left|\int^T_0a^{(\alpha,n)}_{\mu,1}
(u^R_\varepsilon,\frac{\partial\,u^R_\varepsilon}{\partial\,t})(t)d\,t\right|.\nonumber
\end{align}
To provide estimates for the terms on the right-hand side of \eqref{penal10}, notice that by Assumption \ref{ass-sigma} and Lemma \ref{lip-ue}, one gets
\begin{align}\label{penal10}
\left|
a^{(\alpha,n)}_{\mu,0}
\big(u^R_\varepsilon(T),u^R_\varepsilon(T)\big)\right|\le C_1
\end{align}
with $C_1>0$ depending only on $L_P$, $\mu$ and the bounds on $\gamma$. Also, Assumption \ref{ass-Q} and arguments as in the proof of Theorem \ref{uni-est} give
\begin{align}\label{penal11}
&\left|\int^T_0a^{(\alpha,n)}_{\mu,1}
(u^R_\varepsilon,\frac{\partial\,u^R_\varepsilon}{\partial\,t})(t)d\,t\right|\nonumber\\
&\le\int^T_0\int_{\mathbb{R}^n}
\hspace{-3pt}\big\|\overline{C}^{(n,\alpha)}\big\|_{\mathcal{H}}\,\big\|D u^R_\varepsilon\big\|_{\mathcal{H}}\,\Big|\frac{\partial\,u^R_\varepsilon}{\partial\,t}\Big|\mu_n(dx)d\,t\le L_P\,C_2
\Big\|\frac{\partial\,u^R_\varepsilon}{\partial\,t}\Big\|_{L^2(0,T;L^2(\mathcal{H},\mu))}
\end{align}
with $C_2>0$ depending only on $\mu$, $T$ and the bounds on $\gamma$. Similarly Assumption \ref{ass-psi} implies
\begin{align}\label{penal12}
\left|\int^T_0\big(f_{\alpha,n},\frac{\partial\,u^R_\varepsilon}{\partial\,t}\big)_{\mu_n}(t)d\,t\right|\le C_3
\Big\|\frac{\partial\,u^R_\varepsilon}{\partial\,t}\Big\|_{L^2(0,T;L^2(\mathcal{H},\mu))}
\end{align}
with $C_3>0$ depending only on $\mu$, $T$, $L_\Theta$, $L'_\Theta$ and the bounds on $\gamma$.

Therefore, from \eqref{penal09}, \eqref{penal10}, \eqref{penal11} and \eqref{penal12} it follows that
\begin{align}\label{penal13}
\Big\|\frac{\partial\,u^R_\varepsilon}{\partial\,t}\Big\|_{L^2(0,T;L^2(\mathcal{H},\mu))}\le C_4
\end{align}
for a suitable $C_4>0$ independent of $(\varepsilon,R,n,\alpha)$. Now, \eqref{penal13} holds for $\frac{\partial}{\partial\,t}u^R$ as well since it is obtained as the weak limit in $L^2(0,T;L^2(\mathcal{H},\mu))$ of $\frac{\partial}{\partial\,t}u^R_\varepsilon$ as $\varepsilon\to\infty$ (cf.~\cite{Ben-Lio82}, Chapter 3, Section 2.3, p.~239).
\end{proof}

\subsection{Finite-dimensional unbounded domains}\label{sec:fd02}

Recall the optimal stopping problem \eqref{OS3} and set
\begin{equation}\label{raid}
u^{(n)}_\alpha:=\mathcal{U}^{(n)}_\alpha-\Theta^{(n)}.
\end{equation}
From Proposition \ref{unb-d}, Proposition \ref{univ-est-sol} and Proposition \ref{bound-ddt} it follows
\begin{lemma}\label{weak01}
There exists a sequence $(R_i)_{i\in\mathbb{N}}$ such that $R_i\to\infty$ as $i\to\infty$ and $u^{(n)}_{\alpha,R_i}$ converges to $u^{(n)}_\alpha$ as $R_i\to\infty$, weakly in $L^p(0,T;\mathcal{V}^p_n)$ and strongly in $L^p(0,T; L^p(\mathbb{R}^n,\mu_n))$, $1\leq p<\infty$. Moreover, $\frac{\partial\,}{\partial\,t}u^{(n)}_{\alpha,R_i}$ converges to $\frac{\partial\,}{\partial\,t}u^{(n)}_{\alpha}$ as $R_i\to\infty$, weakly in $L^2(0,T;L^2(\mathbb{R}^n,\mu_n))$.
\end{lemma}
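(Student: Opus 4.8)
The plan is to deduce the lemma from the uniform a priori estimates of Propositions~\ref{univ-est-sol} and \ref{bound-ddt} together with the convergence of Proposition~\ref{unb-d}, by a routine weak--compactness and diagonal--subsequence argument; throughout $n$ and $\alpha$ are fixed. I would first record the \emph{strong} convergence $u^{(n)}_{\alpha,R}\to u^{(n)}_\alpha$ in $L^p(0,T;L^p(\mathbb{R}^n,\mu_n))$. Since $\mu_n$ is a probability measure on $\mathbb{R}^n$, given $\eta>0$ pick a compact $\mathcal{K}\subset\mathbb{R}^n$ with $\mu_n(\mathbb{R}^n\setminus\mathcal{K})<\eta$; for $R$ large enough that $\mathcal{K}\subset\mathcal{O}_R$ the zero extension of $u^{(n)}_{\alpha,R}$ coincides on $[0,T]\times\mathcal{K}$ with $\mathcal{U}^{(n)}_{\alpha,R}-\Theta^{(n)}$, so Proposition~\ref{unb-d} gives uniform convergence there, while off $\mathcal{K}$ both $u^{(n)}_{\alpha,R}$ and $u^{(n)}_\alpha$ are bounded by $2\overline{\Theta}$ (cf.\ the proof of Proposition~\ref{univ-est-sol}). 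Splitting the spatial integral over $\mathcal{K}$ and its complement, letting $R\to\infty$, then $\eta\to0$, and using dominated convergence in $t\in[0,T]$, one obtains the strong convergence for every $1\le p<\infty$.

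For the \emph{weak} part, note that by Proposition~\ref{univ-est-sol} (more precisely by the uniform bounds $\|u^{(n)}_{\alpha,R}(t)\|_{L^\infty}\le2\overline{\Theta}$ and $\|Du^{(n)}_{\alpha,R}(t)\|_{\mathbb{R}^n}\le L_\mathcal{U}+L_\Theta$ obtained in its proof) the family $(u^{(n)}_{\alpha,R})_R$ is bounded in $L^\infty(0,T;\mathcal{V}^p_n)$, hence in $L^p(0,T;\mathcal{V}^p_n)$, for every $1<p<\infty$, and $\mathcal{V}^p_n$ is reflexive and separable, being isometric to a closed subspace of $L^{2p}(\mathbb{R}^n,\mu_n)\times L^{2p'}(\mathbb{R}^n,\mu_n;\mathbb{R}^n)$. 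By Proposition~\ref{bound-ddt}, $\big(\tfrac{\partial}{\partial t}u^{(n)}_{\alpha,R}\big)_R$ is bounded in the Hilbert space $L^2(0,T;L^2(\mathbb{R}^n,\mu_n))$. Applying weak sequential compactness of bounded sets in reflexive Banach spaces successively for $p=2,3,4,\dots$ and for the time derivative, and passing to a diagonal subsequence, I obtain a single sequence $R_i\to\infty$ along which $u^{(n)}_{\alpha,R_i}$ converges weakly in each $L^p(0,T;\mathcal{V}^p_n)$, $1<p<\infty$, and $\tfrac{\partial}{\partial t}u^{(n)}_{\alpha,R_i}$ converges weakly in $L^2(0,T;L^2(\mathbb{R}^n,\mu_n))$.

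It then remains to identify these limits with $u^{(n)}_\alpha$ and $\tfrac{\partial}{\partial t}u^{(n)}_\alpha$. Since $L^p(0,T;\mathcal{V}^p_n)$ embeds continuously into $L^2(0,T;L^2(\mathbb{R}^n,\mu_n))$ for $p\ge2$ (because $2p\ge2$ and $\mu_n$ is finite), and for $1<p<2$ the sequence is moreover bounded in $L^\infty(0,T;\mathcal{V}^p_n)$, the weak limit in $L^p(0,T;\mathcal{V}^p_n)$ is also the weak limit in $L^2(0,T;L^2(\mathbb{R}^n,\mu_n))$; by the strong convergence established above this forces the limit to be $u^{(n)}_\alpha$, and in particular $u^{(n)}_\alpha\in L^p(0,T;\mathcal{V}^p_n)$. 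For the time derivative, testing the identity $\int_0^T\!\!\int_{\mathbb{R}^n}\big(\tfrac{\partial}{\partial t}u^{(n)}_{\alpha,R_i}\big)\phi\,\psi\,\mu_n(dx)\,dt=-\int_0^T\!\!\int_{\mathbb{R}^n}u^{(n)}_{\alpha,R_i}\,\phi'\,\psi\,\mu_n(dx)\,dt$ against $\phi\in C^\infty_c((0,T))$ and $\psi\in L^2(\mathbb{R}^n,\mu_n)$, and letting $i\to\infty$ (weak convergence on the left, strong convergence on the right), identifies the weak $L^2$-limit of $\tfrac{\partial}{\partial t}u^{(n)}_{\alpha,R_i}$ as the distributional derivative $\tfrac{\partial}{\partial t}u^{(n)}_\alpha$. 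Since all these limits are uniquely determined, the convergences hold along the single chosen sequence $(R_i)$.

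The argument is essentially standard, and I do not expect a genuine obstacle. The points that deserve a little care are the bookkeeping of the zero extension near $\partial\mathcal{O}_R$ when transferring Proposition~\ref{unb-d} to the extended functions, and arranging the successive extractions so that \emph{one} sequence $(R_i)$ simultaneously serves the weak convergence in every $L^p(0,T;\mathcal{V}^p_n)$ and the weak convergence of the time derivative in $L^2(0,T;L^2(\mathbb{R}^n,\mu_n))$; both are handled by the diagonal procedure described above, using the uniform-in-$R$ bounds as the only nontrivial input.
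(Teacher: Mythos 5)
Your argument is correct and is precisely the route the paper intends: the lemma is stated in the text as a direct consequence of Propositions \ref{unb-d}, \ref{univ-est-sol} and \ref{bound-ddt}, and your write-up simply fills in the standard details (tightness of $\mu_n$ plus uniform convergence on compacts for the strong limit, reflexivity of $\mathcal{V}^p_n$ and the uniform bounds for weak compactness, and identification of the limits via the strong convergence and a distributional test for the time derivative). No gaps.
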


In the spirit of \cite{Ben-Lio82}, Chapter 3, Section 1.11, take $w_R\in\mathcal{K}_{n,R}$ (cf.\ \eqref{urto}) and recall that $u^{(n)}_{\alpha,R}$ is the unique solution of \eqref{str-f}. Define $\tilde{w}_R\in\mathcal{K}_{n,R}$ by
\begin{equation}\label{subs}
\tilde{w}_R-u^{(n)}_{\alpha,R}:=\frac{1}{\sqrt{(2\pi)^n\lambda_1\lambda_2\cdots\lambda_n}}\exp{
\left(-\sum_{i=1}^n{\frac{x_i^2}{\lambda_i}}\right)}(w_R-u^{(n)}_{\alpha,R}).
\end{equation}
Take $w=\tilde{w}_R$ in \eqref{str-f} and use \eqref{subs} to obtain
\begin{align}\label{seq-vi}
-\big(\frac{\partial\,u^{(n)}_{\alpha,R}}{\partial t},w_R-u^{(n)}_{\alpha,R}\big)_{\mu_n}\hspace{-2pt}+\hspace{-2pt}a^{(\alpha,n)}_\mu\big(u^{(n)}_{\alpha,R}\,,\,
w_R-u^{(n)}_
{\alpha,R}\big)\hspace{-1pt}-\hspace{-1pt}(f_{\alpha,n}\,,\,&w_R-u^{(n)}_{\alpha,R})_{\mu_n}\geq0\quad\text{for a.e.~$t\in[0,T]$}.
\end{align}

For every $1<p<\infty$, denote by $\mathcal{K}^p_{n,\mu}$ the closed convex set
\begin{equation}\label{ccs}
\mathcal{K}^p_{n,\mu}:=\{w:\,w\in \mathcal{V}^p_n\:\:\text{and}\:\:w\geq0\,\textrm{a.e. in}\,\}.
\end{equation}
We can now extend Proposition \ref{bl-str} to the unbounded case, i.e.\ to $\mathbb{R}^n$.
\begin{theorem}\label{unb-dom}
For every $1<p<\infty$, the function $u^{(n)}_\alpha$ is a solution of the variational problem on $\mathbb{R}^n$
\begin{eqnarray}\label{d-2}
\left\{
\begin{array}{l}
u\in L^2(0,T;\mathcal{V}^p_n);\:\:\:\:\frac{\partial}{\partial\,t}u\in L^2(0,T;L^2(\mathbb{R}^n,\mu_n));\\
\\
u(T,x^{(n)})=0,\:\text{for}\:\:x^{(n)}\in\mathbb{R}^n;\:\:\:\:u(t,x^{(n)})\geq 0,\:\text{for}\:\:(t,x^{(n)})\in[0,T]\times\mathbb{R}^n;\\
\\
\hspace{-3pt}\displaystyle{-\big(\frac{\partial u}{\partial t}(t),w-u(t)\big)_{\mu_n}+a^{(\alpha,n)}_\mu\big(u(t),w-u(t)\big)- \big(f_{\alpha,n}(t),w-u(t)\big)_{\mu_n}\geq0},\\
\hspace{+230pt}\text{for a.e.~$t\in[0,T]$ and for all $w\in \mathcal{K}^p_{n,\mu}$}.
\end{array}\right.
\end{eqnarray}

Moreover, $u^{(n)}_\alpha\in C([0,T]\times\mathbb{R}^n)$ and the optimal stopping time for $\mathcal{U}^{(n)}_\alpha$ of \eqref{OS3} is
\begin{equation}\label{bis}
\tau^\star_{\alpha,n}(t,x):=\inf\{s\geq t\,:\,\mathcal{U}^{(n)}_\alpha(s,X^{(\alpha)t,x;n}_s)=\Theta^{(n)}(s,X^{(\alpha)t,x;n}_s)\}\wedge T.
\end{equation}
\end{theorem}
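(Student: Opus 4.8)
The plan is to derive \eqref{d-2} by passing to the limit as $R_i\to\infty$ in the finite-domain variational inequality \eqref{seq-vi}, exploiting the weak and strong convergences of Lemma \ref{weak01} and the uniform estimates of Propositions \ref{univ-est-sol} and \ref{bound-ddt}. The memberships $u^{(n)}_\alpha\in L^2(0,T;\mathcal{V}^p_n)$ and $\frac{\partial}{\partial t}u^{(n)}_\alpha\in L^2(0,T;L^2(\mathbb{R}^n,\mu_n))$ follow at once from those uniform bounds together with weak lower semicontinuity of the norms along the subsequence $(R_i)$. The sign condition $u^{(n)}_\alpha\geq0$ is trivial because $\mathcal{U}^{(n)}_\alpha\geq\Theta^{(n)}$ (choose $\tau=t$ in \eqref{OS3}); the terminal condition $u^{(n)}_\alpha(T,\cdot)=0$ holds since the only admissible stopping time at $T$ is $\tau=T$, whence $\mathcal{U}^{(n)}_\alpha(T,\cdot)=\Theta^{(n)}(T,\cdot)$, and the trace is meaningful because $u^{(n)}_\alpha$ lies in $L^2(0,T;\mathcal{V}^p_n)$ with time derivative in $L^2(0,T;L^2(\mathbb{R}^n,\mu_n))$, hence in $C([0,T];L^2(\mathbb{R}^n,\mu_n))$. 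The continuity $u^{(n)}_\alpha\in C([0,T]\times\mathbb{R}^n)$ is obtained from the last assertion of Proposition \ref{unb-d}: by Proposition \ref{bl-str} and Corollary \ref{str-sol-obs}, $u^{(n)}_{\alpha,R}\in C([0,T]\times\overline{\mathcal{O}}_R)$ vanishes on $[0,T]\times\partial\mathcal{O}_R$, so its zero extension makes $\mathcal{U}^{(n)}_{\alpha,R}=u^{(n)}_{\alpha,R}+\Theta^{(n)}\in C_b([0,T]\times\mathbb{R}^n)$, and Proposition \ref{unb-d} then yields $\mathcal{U}^{(n)}_\alpha\in C_b$, so $u^{(n)}_\alpha=\mathcal{U}^{(n)}_\alpha-\Theta^{(n)}$ is continuous.

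For the inequality itself I would fix $w\in\mathcal{K}^p_{n,\mu}$ and approximate it in $\mathcal{V}^p_n$ by nonnegative functions $w^{(m)}\in C^\infty_c(\mathbb{R}^n)$ (mollification and cut-off, using that $\mu_n$ has a smooth strictly positive Lebesgue density), each of which belongs to $\mathcal{K}_{n,R_i}$ for all large $i$ and is therefore admissible in \eqref{seq-vi}. Integrating \eqref{seq-vi} in time and letting $i\to\infty$, the linear contributions pass to the limit by Lemma \ref{weak01} (weak convergence of $\frac{\partial}{\partial t}u^{(n)}_{\alpha,R_i}$ and of $u^{(n)}_{\alpha,R_i}$, strong $L^p$-convergence of $u^{(n)}_{\alpha,R_i}$) and by the continuity of $a^{(\alpha,n)}_\mu$ established in Theorem \ref{uni-est}, which makes $a^{(\alpha,n)}_\mu(\,\cdot\,,w^{(m)})$ a bounded linear functional on $L^2(0,T;\mathcal{V}^p_n)$.

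The two quadratic terms are the main obstacle. Using $u^{(n)}_{\alpha,R_i}(T,\cdot)=0$ one has $\int_0^T\big(\frac{\partial}{\partial t}u^{(n)}_{\alpha,R_i},u^{(n)}_{\alpha,R_i}\big)_{\mu_n}dt=-\tfrac12\|u^{(n)}_{\alpha,R_i}(0)\|^2_{L^2(\mathbb{R}^n,\mu_n)}$, and since the traces at $t=0$ converge weakly in $L^2(\mathbb{R}^n,\mu_n)$ (the trace map being weakly continuous on $H^1(0,T;L^2(\mathbb{R}^n,\mu_n))$), weak lower semicontinuity of the $L^2$-norm gives the correct one-sided bound. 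For $\int_0^T a^{(\alpha,n)}_\mu(u^{(n)}_{\alpha,R_i},u^{(n)}_{\alpha,R_i})dt$ one splits $a^{(\alpha,n)}_\mu=a^{(\alpha,n)}_{\mu,0}+a^{(\alpha,n)}_{\mu,1}$ as in \eqref{penal05}: the symmetric part $a^{(\alpha,n)}_{\mu,0}(v,v)=\tfrac12\int_{\mathbb{R}^n}\langle B^{(n)}Dv,Dv\rangle_\mathcal{H}\mu_n(dx)$ is nonnegative (as $B^{(n)}\geq\epsilon_n^2 I$) and convex, hence weakly sequentially lower semicontinuous, while $\int_0^T a^{(\alpha,n)}_{\mu,1}(u^{(n)}_{\alpha,R_i},u^{(n)}_{\alpha,R_i})dt=\int_0^T\!\int_{\mathbb{R}^n}\langle\overline{C}^{(n,\alpha)},Du^{(n)}_{\alpha,R_i}\rangle_\mathcal{H}\,u^{(n)}_{\alpha,R_i}\,\mu_n(dx)dt$ converges because it pairs the weakly convergent factor $Du^{(n)}_{\alpha,R_i}$ with the strongly convergent factor $u^{(n)}_{\alpha,R_i}$ (recall $\overline{C}^{(n,\alpha)}\in L^2(\mathbb{R}^n,\mu_n;\mathbb{R}^n)$ by Assumption \ref{ass-Q} and the trace class of $Q$). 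Collecting signs, the time-integrated form of \eqref{seq-vi} survives in the limit for every nonnegative $w^{(m)}\in C^\infty_c(\mathbb{R}^n)$, then for every $w\in\mathcal{K}^p_{n,\mu}$ by density; finally, since $u^{(n)}_\alpha\in L^2(0,T;\mathcal{V}^p_n)$ has time derivative in $L^2(0,T;L^2(\mathbb{R}^n,\mu_n))$, the time-integrated and the pointwise-in-$t$ formulations are equivalent (the classical localization argument of \cite{Ben-Lio82}, Chapter 3, testing with $w(s)=u^{(n)}_\alpha(s)$ off an arbitrarily small interval around a Lebesgue point), which produces the a.e.-$t$ inequality in \eqref{d-2}.

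Finally, for the optimality of $\tau^\star_{\alpha,n}$ I would promote \eqref{d-2} to its complementarity form: interior regularity for parabolic obstacle problems applied to the uniformly elliptic operator $\mathcal{L}_{\alpha,n}$ with bounded datum $f_{\alpha,n}$ gives $u^{(n)}_\alpha\in W^{2,1}_{p,\mathrm{loc}}\big((0,T)\times\mathbb{R}^n\big)$ and $\max\{\frac{\partial u^{(n)}_\alpha}{\partial t}+\mathcal{L}_{\alpha,n}u^{(n)}_\alpha+f_{\alpha,n},-u^{(n)}_\alpha\}=0$ a.e., equivalently $\frac{\partial\mathcal{U}^{(n)}_\alpha}{\partial t}+\mathcal{L}_{\alpha,n}\mathcal{U}^{(n)}_\alpha\leq0$ a.e.\ with equality a.e.\ on the open set $\{\mathcal{U}^{(n)}_\alpha>\Theta^{(n)}\}$, and $\mathcal{U}^{(n)}_\alpha\geq\Theta^{(n)}$. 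Applying the It\^o--Krylov formula to $s\mapsto\mathcal{U}^{(n)}_\alpha(s,X^{(\alpha)t,x;n}_s)$ then shows it is a supermartingale whose stopped version at $\tau^\star_{\alpha,n}$ is a martingale (the drift vanishes before $\tau^\star_{\alpha,n}$, where $\mathcal{U}^{(n)}_\alpha>\Theta^{(n)}$ by continuity); combining this with $\mathcal{U}^{(n)}_\alpha(\tau^\star_{\alpha,n},X^{(\alpha)t,x;n}_{\tau^\star_{\alpha,n}})=\Theta^{(n)}(\tau^\star_{\alpha,n},X^{(\alpha)t,x;n}_{\tau^\star_{\alpha,n}})$ — on $\{\tau^\star_{\alpha,n}<T\}$ by definition and continuity, on $\{\tau^\star_{\alpha,n}=T\}$ because $\mathcal{U}^{(n)}_\alpha(T,\cdot)=\Theta^{(n)}(T,\cdot)$ — optional sampling yields $\mathcal{U}^{(n)}_\alpha(t,x^{(n)})=\mathbb{E}\{\Theta^{(n)}(\tau^\star_{\alpha,n},X^{(\alpha)t,x;n}_{\tau^\star_{\alpha,n}})\}$, while the supermartingale property gives the matching bound over all $\tau\in[t,T]$, so $\tau^\star_{\alpha,n}$ is optimal. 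The secondary difficulty here is the legitimacy of the It\^o--Krylov expansion on the unbounded state space, i.e.\ the integrability of $\mathcal{L}_{\alpha,n}\mathcal{U}^{(n)}_\alpha(s,X^{(\alpha)t,x;n}_s)$ along the path, which one controls through Krylov-type estimates (available thanks to the non-degeneracy $B^{(n)}\geq\epsilon_n^2 I$), the bound \eqref{lip-V} on $D\mathcal{U}^{(n)}_\alpha$, the at most linear growth of the coefficients of $\mathcal{L}_{\alpha,n}$, and the moment estimates \eqref{apr1}.
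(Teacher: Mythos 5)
Your treatment of the variational inequality itself follows the paper's route almost verbatim: approximate $w\in\mathcal{K}^p_{n,\mu}$ by elements of $\mathcal{K}^p_{n,\mu}\cap\mathcal{K}_{n,R}$, pass to the limit in \eqref{seq-vi} integrated in time along the subsequence of Lemma \ref{weak01}, handle the quadratic bilinear-form term by isolating its nonnegative symmetric part, and recover the a.e.-$t$ statement at Lebesgue points; the continuity claim via Proposition \ref{unb-d} is also the paper's argument. One caveat: your integration by parts $\int_0^T(\partial_t u^{(n)}_{\alpha,R_i},u^{(n)}_{\alpha,R_i})\,dt=-\tfrac12\|u^{(n)}_{\alpha,R_i}(0)\|^2_{L^2(\mathbb{R}^n,\mu_n)}$ only has the favourable sign for weak lower semicontinuity when the right endpoint is $T$, where the trace vanishes; on a subinterval $[t_1,t_2]$ --- which is what the localization to Lebesgue points actually requires --- the boundary term $+\tfrac12\|u^{(n)}_{\alpha,R_i}(t_2)\|^2$ enters with the wrong sign and weak convergence of the traces does not suffice. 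The paper avoids this by treating $\int_{t_1}^{t_2}(\partial_t u^{(n)}_{\alpha,R_i},w_{R_i}-u^{(n)}_{\alpha,R_i})\,dt$ as a pairing of the weakly convergent time derivative with the strongly convergent $w_{R_i}-u^{(n)}_{\alpha,R_i}$ (cf.\ \eqref{all}), which works on every subinterval; you should do the same.

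For the optimality of $\tau^\star_{\alpha,n}$ you take a genuinely different and substantially heavier route, and this is where the real gaps are. You assert interior $W^{2,1}_{p,\mathrm{loc}}$ regularity for $u^{(n)}_\alpha$ on the unbounded domain and then invoke It\^o--Krylov, but neither step is free: the $W^{2,p}$ estimates of Proposition \ref{bl-str} concern the bounded-domain problems and are not shown to be uniform in $R$ on compact sets, the complementarity form must first be extracted from the $\mu_n$-weighted inequality \eqref{d-2}, and the admissibility of the It\^o expansion along paths of a diffusion with linearly growing drift on all of $\mathbb{R}^n$ is only sketched. The paper needs none of this: it uses the identity \eqref{dyn-prg-3} already established for the arrested problem, proves $\tau^\star_{\alpha,n,R}\wedge\tau^\star_{\alpha,n}\to\tau^\star_{\alpha,n}$ $\mathbb{P}$-a.s.\ as $R\to\infty$ (an extension of \cite{Ben-Lio82}, Chapter 3, Theorem 3.7 together with Proposition \ref{unb-d}), and passes to the limit in \eqref{dyn-prg-4} using the locally uniform convergence $\mathcal{U}^{(n)}_{\alpha,R}\to\mathcal{U}^{(n)}_\alpha$ and continuity to obtain \eqref{basev2}. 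Either fill in the regularity and It\^o--Krylov details or, more economically, adopt this probabilistic limit argument.
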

\begin{proof}
Observe that, by arguments on cut-off functions as in \cite{Adams}, Theorem 3.22, for each $w\in\mathcal{K}^p_{n,\mu}$ there exists a family $(w_{R})_{R>0}\subset\mathcal{K}^p_{n,\mu}\cap\mathcal{K}_{n,R}$ (cf.~\eqref{urto}) such that $w_{R}\to w$ as $R\to\infty$ in $\mathcal{V}^p_n$. Rewrite the inequality \eqref{seq-vi} as
\begin{align}\label{d-1}
-\big(\frac{\partial u^{(n)}_{\alpha,R}}{\partial t}(t),w_{R}-u^{(n)}_{\alpha,R}(t)\big)_{\mu_n}&+a^{(\alpha,n)}_\mu\big(u^{(n)}_{\alpha,R}(t),w_{R}\big) \\
&\geq \big(f_{\alpha,n}(t),w_{R}-u^{(n)}_{\alpha,R}(t)\big)_{\mu_n}+a^{(\alpha,n)}_\mu\big(u^{(n)}_{\alpha,
R}(t),u^{(n)}_{\alpha,R}(t)\big).\nonumber
\end{align}
Consider the sequences $(R_i)_{i\in\mathbb{N}}$ and $(u^{(n)}_{\alpha,R_i})_{i\in\mathbb{N}}$ of Lemma \ref{weak01} and fix arbitrary $0\le t_1<t_2\le T$. Then taking limits as $i\to\infty$ gives (cf.~for instance \cite{Brezis}, Proposition 3.5)
\begin{eqnarray}
\int_{t_1}^{t_2}{(\frac{\partial u^{(n)}_{\alpha, R_i}}{\partial t}, w_{R_i}-u_{\alpha,R_i}^{(n)})_{\mu_n}dt} &\to& \int_{t_1}^{t_2}{(\frac{\partial u^{(n)}_{\alpha}}{\partial t}, w-u^{(n)}_\alpha)_{\mu_n}dt},\label{all}\\
\int_{t_1}^{t_2}{(f_{\alpha,n},w_{R_i}-u^{(n)}_{\alpha,R_i})_{\mu_n} dt} &\to& \int_{t_1}^{t_2}{(f_{\alpha,n},w-u^{(n)}_{\alpha})_{\mu_n} dt},\label{all-b}\\
\int_{t_1}^{t_2}a^{(\alpha,n)}_\mu(u^{(n)}_{\alpha,R_i},w_{R_i})dt &\to& \int_{t_1}^{t_2}a^{(\alpha,n)}_\mu(u^{(n)}_{\alpha},w)dt\,.\label{all-c}
\end{eqnarray}
As for the last term on the right hand side of \eqref{d-1}, consider
\begin{align}\label{all3}
\int_{t_1}^{t_2}{\hspace{-5pt}a^{(\alpha,n)}_\mu(u^{(n)}_{\alpha,R_i},u^{(n)}_{\alpha,R_i})dt}=&\hspace{-3pt}
\int_{t_1}^{t_2}{\hspace
{-5pt}a^{(\alpha,n)}_\mu(u^{(n)}_{\alpha,R_i}-u^{(n)}_{\alpha},u^{(n)}_{\alpha,R_i}-u^
{(n)}_{\alpha})dt}\\
&+\hspace{-3pt}\int_{t_1}^{t_2}{\hspace{-5pt}a^{(\alpha,n)}_\mu(u^{(n)}_{\alpha},u^{(n)}_{\alpha,R_i})dt}+
\hspace{-3pt}\int_{t_1}^{t_2}{\hspace{-5pt}a^{(\alpha,n)}_\mu(u^{(n)}_{\alpha,R_i}
-u^{(n)}_{\alpha},u^{(n)}_{\alpha})dt}.\nonumber
\end{align}
For the last two integrals argue as above, hence
\begin{eqnarray}\label{all-a1}
&&\lim_{i\to\infty}\int_{t_1}^{t_2}{a^{(\alpha,n)}_\mu(u^{(n)}_{\alpha},u^{(n)}_{\alpha,R_i})dt} = \int_{t_1}^{t_2}{a^{(\alpha,n)}_\mu(u^{(n)}_{\alpha},u^{(n)}_{\alpha})dt},
\end{eqnarray}
\begin{eqnarray}\label{all-a2}
&&\hspace{-60pt}\lim_{i\to\infty}\int_{t_1}^{t_2}{a^{(\alpha,n)}_\mu(u^{(n)}_{\alpha,R_i}-u^{(n)}_{\alpha},u^{(n)}
_{\alpha})dt} = 0.
\end{eqnarray}
On the other hand, to the first integral in \eqref{all3} apply arguments similar to those in the proof of Theorem \ref{uni-est} to get
\begin{align}\label{all4}
\int_{t_1}^{t_2}\hspace{-5pt}a^{(\alpha,n)}_\mu &(u^{(n)}_{\alpha,R_i}-u^{(n)}_{\alpha},u^{(n)}_{\alpha,R_i}-u^{(n)}_{\alpha})dt\\
&\geq -C_p\Big\|Du^{(n)}_{\alpha,R_i}-Du^{(n)}_{\alpha}\Big\|_{L^2(0,T;L^{2p'}(\mathbb{R}^n,\mu_n;\mathbb{R}^n))}
\Big\|u^{(n)}_{\alpha,R_i}-u^{(n)}_{\alpha}\Big\|_{L^2(0,T;L^{2p}(\mathbb{R}^n,\mu_n))},\nonumber
\end{align}
with $p$ and $p'$ as in \eqref{b-sp1} and $C_p>0$ a suitable constant independent of $i$, $\alpha$ and $n$.
It then follows from Proposition \ref{univ-est-sol} and Lemma \ref{weak01} that
\begin{eqnarray}\label{all-a3}
\lim_{i\to\infty}\int_{t_1}^{t_2}{a^{(\alpha,n)}_\mu(u^{(n)}_{\alpha,R_i}-u^{(n)}_{\alpha},u^{(n)}_{\alpha,R_i}-u^{(n)}_
{\alpha})dt}\geq0.
\end{eqnarray}
Now \eqref{all-a1}, \eqref{all-a2} and \eqref{all-a3} imply
\begin{eqnarray}\label{all5}
\lim_{i\to\infty}\int_{t_1}^{t_2}{a^{(\alpha,n)}_\mu(u^{(n)}_{\alpha,R_i},u^{(n)}_{\alpha,R_i})dt}\geq \int_{t_1}^{t_2}{a^{(\alpha,n)}_\mu(u^{(n)}_{\alpha},u^{(n)}_{\alpha})dt}.
\end{eqnarray}
Therefore  \eqref{d-1}, \eqref{all}, \eqref{all-b}, \eqref{all-c}, \eqref{all5} show the convergence of \eqref{str-f} to \eqref{d-2} since $t_1$ and $t_2$ are arbitrary.

The continuity of $u^{(n)}_\alpha$ follows from Proposition \ref{unb-d} and Corollary \ref{str-sol-obs}.
As for the optimality of $\tau^\star_{\alpha,n}(t,x)$, notice that its proof is a simpler version of the one of Lemma \ref{tdar} and Theorem \ref{os-t1} below, hence it is only outlined here. For any initial data $(t,x)$ one has
\begin{equation}\label{brabba}
\lim_{R\to\infty}\tau^\star_{\alpha,n,R}(t,x)\wedge\tau^\star_{\alpha,n}(t,x)=\tau^\star_{\alpha,n}(t,x)
\quad\mathbb{P}\text{-a.s.}
\end{equation}
by an extension of \cite{Ben-Lio82}, Chapter 3, Section 3, Theorem 3.7 and by our Proposition \ref{unb-d}. Since $\tau^\star_{\alpha,n,R}$ is optimal for $\mathcal{U}^{(n)}_{\alpha,R}$ and $\tau^\star_{\alpha,n,R}\wedge\tau^\star_{\alpha,n}\leq\tau^\star_{\alpha,n,R}$ $\mathbb{P}$-a.s., it follows from \eqref{dyn-prg-3} that
\begin{eqnarray}\label{dyn-prg-4}
\mathcal{U}^{(n)}_{\alpha,R}(t,x^{(n)})=\mathbb{E}\left\{\mathcal{U}^{(n)}_{\alpha,R}(\tau^\star_
{\alpha,n,R}\wedge\tau^\star_{\alpha,n},X^{(\alpha)t,x;n}_{\tau^\star_{\alpha,n,R}
\wedge\tau^\star_{\alpha,n}})\right\}.
\end{eqnarray}
Therefore, Proposition \ref{unb-d}, the continuity of $\mathcal{U}^{(n)}_{\alpha}$ and \eqref{brabba} provide
\begin{eqnarray}\label{basev2}
\mathcal{U}^{(n)}_{\alpha}(t,x^{(n)})=\mathbb{E}\left\{\mathcal{U}^{(n)}_{\alpha}(\tau^\star_{\alpha,n},X^{(\alpha)t,x;n}_
{\tau^\star_{\alpha,n}})\right\}=\mathbb{E}\left\{\Theta^{(n)}(\tau^\star_{\alpha,n},X^{(\alpha)t,x;n}_
{\tau^\star_{\alpha,n}})\right\}
\end{eqnarray}
by taking limits as $R\to\infty$ in \eqref{dyn-prg-4}. It follows that $\tau^\star_{\alpha,n}$ is optimal.
\end{proof}
\begin{remark}\label{orrore}
Notice that for any stopping time $\sigma$ the same arguments that provide \eqref{brabba} also give
\begin{equation}
\lim_{R\to\infty}\tau^\star_{\alpha,n,R}\wedge\tau^\star_{\alpha,n}\wedge\sigma
=\tau^\star_{\alpha,n}\wedge\sigma\qquad\mathbb{P}\text{-a.s.}
\end{equation}
Therefore one has
\begin{equation}\label{BB}
\mathcal{U}^{(n)}_\alpha(t,x^{(n)})=\mathbb{E}\bigg\{\mathcal{U}^{(n)}_\alpha(\sigma,X^{(\alpha)t,x;n}_\sigma)\bigg\}\qquad
\text{for $\sigma\leq\tau^\star_{\alpha,n}$, $\mathbb{P}$-a.s.}
\end{equation}
\end{remark}

\subsection{Infinite dimensional domains}\label{sec-infty}
\subsubsection{The variational inequality for bounded operator $A_\alpha$}\label{refer}
Define the infinite-dimensional counterpart of $\mathcal{V}^p_n$ of Definition \ref{b-sp} by setting
\begin{equation}\label{def-sp-V}
\mathcal{V}^p:=\{v\,:\,v\in L^{2p}(\mathcal{H},\mu)\:\textit{and}\:\:D v\in L^{2p'}(\mathcal{H},\mu;\mathcal{H})\}.
\end{equation}
Endow $\mathcal{V}^p$ with the norm
\begin{eqnarray}
\lnr v\rnr_{p}:=\|v\|_{L^{2p}(\mathcal{H},\mu)}+\|D v\|_{L^{2p'}(\mathcal{H},\mu;\mathcal{H})}
\end{eqnarray}
so to obtain a separable Banach space. Notice that $\mathcal{V}^p_n\subset\mathcal{V}^p$ by Remark \ref{conn}. Also, by \eqref{univ}
\begin{equation}\label{univ-2}
\int_0^T{|a^{(\alpha,n)}_\mu(u(t),w(t))| dt}\leq C_{\mu,\gamma,p}\left(\int_0^T{\lnr u(t)\rnr^2_{p}}dt\right)^\frac{1}{2}\left(\int_0^T{\lnr w(t)\rnr^2_{p}}dt\right)^\frac{1}{2}
\end{equation}
for $u,w\in L^2(0,T;\mathcal{V}^p)$.

Denote by $\mathcal{L}_\alpha$ the infinitesimal generator of $X^{(\alpha)}$ (cf.\ \eqref{SDE-yos}); that is,
\begin{equation}\label{ellalpha}
\mathcal{L}_\alpha\, g(x)=\frac{1}{2}Tr\left[\sigma\sigma^*(x)D^2g(x)\right]+\langle A_\alpha x, Dg(x)\rangle\:\:\:\:\textit{for}\,\,g\in C^2_b(\mathcal{H}).
\end{equation}
The bilinear form associated to \eqref{ellalpha} is the infinite-dimensional counterpart of \eqref{lup2} and it is given by
\begin{align}\label{lup3}
a^{(\alpha)}_{\mu}&(u,w):=\int_{\mathcal{H}}\frac{1}{2}\langle B\,Du,D w\rangle _\mathcal{H}\,\mu(dx)+\hspace{-4pt}\int_{\mathcal{H}}\langle\, \overline{C}^{(\alpha)},D u\rangle_{\mathcal{H}}\, w\,\mu(dx)
\end{align}
with $B:=\sigma\sigma^{*}$, $\overline{C}^{(\alpha)}=\frac{1}{2}\left(Tr[D\sigma]_{\mathcal{H}}\sigma+D\sigma\cdot\sigma-
2A_{\alpha}x-\sigma\sigma^{*}Q^{-1}x\right)$ and $D\sigma\cdot\sigma$ denotes the action of $D\sigma\in\mathcal{L}(\mathcal{H})$ on $\sigma\in\mathcal{H}$.

Let $w\in L^2(0,T;\mathcal{V}^p)$ and $(w_n)_{n\in\mathbb{N}}\subset L^2(0,T;\mathcal{V}^p)$ be such that $w_n\to w$. Then, for arbitrary $0\le t_1<t_2\le T$, define $\mathcal{T}_{\alpha,w}(t_1,t_2)\in L^2(0,T;\mathcal{V}^{p})^*$ and the sequence $(\mathcal{T}^{\,n}_{\alpha,w}(t_1,t_2))_{n\in\mathbb{N}}\subset L^2(0,T;\mathcal{V}^{p})^*$ by setting
\begin{align}\label{b-form-e}
\mathcal{T}_{\alpha,w}(t_1,t_2)(\,\cdot\,):=\int_{t_1}^{t_2}{a^{(\alpha)}_{\mu}(\,\cdot\,,w)dt}\quad \textrm{and}\quad\mathcal{T}^{\,n}_{\alpha,w}(t_1,t_2)(\,\cdot\,):=\int_{t_1}^{t_2}{a^{(\alpha,n)}_{\mu}(\,\cdot\,,w_n)dt}.
\end{align}
Tedious but straightforward calculations give
\begin{equation}\label{b-form-c}
\lim_{n\to\infty}\|\mathcal{T}^{\,n}_{\alpha,w}(t_1,t_2)-\mathcal{T}_{\alpha,w}(t_1,t_2)\|_{L^2(0,T;\mathcal{V}^{p})^*}=0.
\end{equation}
Also, recall $f_{\alpha,n}$ of \eqref{gen-inf} and set
\begin{align}\label{gen-inf2}
f_\alpha:=\frac{\partial\Theta}{\partial t}+\mathcal{L}_\alpha\Theta;
\end{align}
then it holds
\begin{equation}\label{b-form-d}
\lim_{n\to\infty}\int_0^T\|f_{\alpha,n}-f_{\alpha}\|^2_{L^p(\mathcal{H},\mu)}\,dt=0,\qquad1\leq p<\infty
\end{equation}
by Assumptions \ref{ass-sigma} and \ref{ass-psi} and dominated convergence theorem. Finally, similarly to $\mathcal{K}^{p}_{n,\mu}$ of \eqref{ccs}, for $1< p< \infty$ define the closed, convex set
\begin{equation}\label{ccs2}
\mathcal{K}^p_{\mu}:=\Big\{w:\,w\in \mathcal{V}^p\:\:\text{and}\:\:w\geq0\:\mu\text{-a.e.}\Big\}.
\end{equation}
\begin{lemma}\label{t-func}
Let $w\in\mathcal{K}^p_{\mu}$ for some $1<p<+\infty$. Then there exists a double-indexed sequence $ (w_{k,n})_{k,n\in\mathbb{N}}\subset \mathcal{V}^p$ such that for $k$ fixed, $w_{k,n}\in\cap_{m\ge n}\mathcal{K}^p_{m,\mu}$.

Moreover,
\begin{eqnarray}
\lim_{k\to\infty}\lim_{n\to\infty}w_{k,n}=w\quad\text{weakly in $\mathcal{V}^p$ and strongly in $L^p(\mathcal{H},\mu)$,}
\end{eqnarray}
taking the limits in the prescribed order.
\end{lemma}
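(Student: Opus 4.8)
The plan is to construct the approximants in two stages matched to the two indices: the inner index $n$ performs a finite-dimensional reduction by conditioning on the first $n$ coordinates, while the outer index $k$ only truncates $w$ so that all the relevant norms stay uniformly bounded. I write $\mathcal{F}_n:=\sigma(x_1,\ldots,x_n)$ and let $\mathbb{E}_\mu[\,\cdot\,|\mathcal{F}_n]$ denote the conditional expectation under $\mu$; concretely it integrates the coordinates of index $>n$ against the Gaussian marginal $\prod_{i>n}\mathcal{N}(0,\lambda_i)$, hence it is a positivity preserving contraction on each $L^q(\mathcal{H},\mu)$, $1\le q<\infty$, and, by the calculus of Gauss-Sobolev spaces (cf.\ \cite{DaPr}, Chapter 10, and \cite{Bogachev}), it maps $W^{1,q}(\mathcal{H},\mu)$ into itself for every $1\le q<\infty$ with $\overline{D}\,\mathbb{E}_\mu[v\,|\,\mathcal{F}_n]=P_n\,\mathbb{E}_\mu[\overline{D}v\,|\,\mathcal{F}_n]$ (the vector-valued conditional expectation, projected onto the first $n$ coordinates); this identity is immediate for smooth cylindrical $v$ and extends to general $v$ by closedness of $\overline{D}$.

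For the outer step I would set $v_k:=w\wedge k$, $k\in\mathbb{N}$. Since $w\in\mathcal{K}^p_\mu\subset\mathcal{V}^p$ and $w\ge 0$, the chain rule in the Gauss-Sobolev spaces gives $v_k\ge 0$, $v_k\in\mathcal{V}^p$ and $\overline{D}v_k=\mathbf{1}_{\{w<k\}}\overline{D}w$, so that $\lnr v_k\rnr_{p}\le\lnr w\rnr_{p}$ for every $k$; moreover $|v_k-w|=(w-k)^+\le w\in L^{2p}(\mathcal{H},\mu)$ and $\|\overline{D}v_k-\overline{D}w\|_{\mathcal{H}}=\mathbf{1}_{\{w\ge k\}}\|\overline{D}w\|_{\mathcal{H}}$, whence dominated convergence yields $v_k\to w$ strongly in $\mathcal{V}^p$ and, a fortiori, weakly in $\mathcal{V}^p$ and strongly in $L^p(\mathcal{H},\mu)$.

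For the inner step, with $k$ fixed I would put $w_{k,n}:=\mathbb{E}_\mu[v_k\,|\,\mathcal{F}_n]$, $n\in\mathbb{N}$. Then $w_{k,n}\ge 0$ and $w_{k,n}$ depends only on $x_1,\ldots,x_n$, so by the product structure of $\mu$ (cf.\ Remark \ref{conn}) its norm in $\mathcal{V}^p_m$ equals its norm in $\mathcal{V}^p_n$ for all $m\ge n$; combined with $\lnr w_{k,n}\rnr_{p,n}\le\lnr v_k\rnr_{p}\le\lnr w\rnr_{p}$ (conditional Jensen applied to $v_k$ and, via the commutation identity, to $\overline{D}v_k$, together with the fact that $P_n$ is a contraction) and with $w_{k,n}\ge0$, this shows $w_{k,n}\in\bigcap_{m\ge n}\mathcal{K}^p_{m,\mu}$, as required. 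For the limit $n\to\infty$ at $k$ fixed: since $\sigma(\bigcup_n\mathcal{F}_n)$ is the $\mu$-completion of the Borel $\sigma$-algebra and $v_k\in L^{2p}(\mathcal{H},\mu)$, the $L^{2p}$-martingale convergence theorem gives $w_{k,n}\to v_k$ strongly in $L^{2p}(\mathcal{H},\mu)$, hence in $L^p(\mathcal{H},\mu)$; as $(\lnr w_{k,n}\rnr_{p})_n$ is bounded and $\mathcal{V}^p$ is reflexive (a closed subspace of the reflexive space $L^{2p}(\mathcal{H},\mu)\times L^{2p'}(\mathcal{H},\mu;\mathcal{H})$, recall $p>1$), every subsequence has a weakly $\mathcal{V}^p$-convergent sub-subsequence whose limit coincides with $v_k$ by the $L^p$-convergence and closedness of $\overline{D}$, so $w_{k,n}\rightharpoonup v_k$ in $\mathcal{V}^p$. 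Taking the limits in the prescribed order then yields $\lim_{k\to\infty}\lim_{n\to\infty}w_{k,n}=\lim_{k\to\infty}v_k=w$, weakly in $\mathcal{V}^p$ and strongly in $L^p(\mathcal{H},\mu)$, which is the assertion.

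The one point requiring care is the interaction of the closure operator $\overline{D}$ both with the conditional expectation $\mathbb{E}_\mu[\,\cdot\,|\mathcal{F}_n]$ and with the truncation $w\mapsto w\wedge k$: on smooth cylindrical functions these commutations are elementary, and one extends them to general $w\in\mathcal{V}^p$ by density and closedness of $\overline{D}$ --- the same mechanism that allows one to identify $\overline{D}$ computed ``in $\mathcal{H}$'' with the finite-dimensional gradient on each $\mathcal{V}^p_n$ (cf.\ Remark \ref{conn}). Everything else is a routine combination of conditional Jensen inequalities, $L^q$-martingale convergence and dominated convergence.
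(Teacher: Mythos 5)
Your proof is correct, but it follows a genuinely different route from the paper's. The paper's argument starts from the density of the exponential cylindrical functions $\mathcal{E}_A(\mathcal{H})$ of \eqref{dense} in $\mathcal{V}^p$ (a fact it only sketches in a footnote), picks $\phi^{(k)}\to w$ in $\mathcal{V}^p$, performs the finite-dimensional reduction by composing with the projection, $\phi^{(k)}_n:=\phi^{(k)}\circ P_n$, and only then restores positivity by setting $w_{k,n}:=[\phi^{(k)}_n]^+$, identifying the weak $\mathcal{V}^p$-limit through the inequality $\big|[\phi^{(k)}_n]^+-w\big|\le\big|\phi^{(k)}_n-w\big|$ (valid since $w\ge0$) and dominated convergence. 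You instead perform the reduction by conditioning on $\sigma(x_1,\dots,x_n)$, so positivity is preserved for free at every stage, and you replace the density of $\mathcal{E}_A(\mathcal{H})$ by conditional Jensen, the commutation $\overline{D}\,\mathbb{E}_\mu[\,\cdot\,|\mathcal{F}_n]=P_n\,\mathbb{E}_\mu[\overline{D}\,\cdot\,|\mathcal{F}_n]$ and $L^{2p}$-martingale convergence together with reflexivity of $\mathcal{V}^p$. What your route buys: it bypasses the density of $\mathcal{E}_A(\mathcal{H})$ in $\mathcal{V}^p$ entirely, it yields the clean uniform bound $\lnr w_{k,n}\rnr_{p,n}\le\lnr w\rnr_p$, and the membership $w_{k,n}\in\cap_{m\ge n}\mathcal{K}^p_{m,\mu}$ is immediate. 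What it costs: you must justify the locality/chain rule $\overline{D}(w\wedge k)=\mathbf{1}_{\{w<k\}}\overline{D}w$ and the commutation identity in the mixed-exponent space $\mathcal{V}^p$ --- both standard for the product Gaussian measure and correctly extended from cylindrical functions by closedness of $\overline{D}$, as you indicate; note that the paper's own proof leans on the same Lipschitz post-composition property (for $t\mapsto t^+$), so neither argument is strictly more elementary on this point. A minor observation: in your construction the outer truncation index $k$ is redundant, since $\mathbb{E}_\mu[w\,|\,\mathcal{F}_n]$ alone already has all the required properties with a single index; keeping it is harmless and matches the double-indexed formulation of the lemma.
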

\begin{proof}
Since $D(A^*)$ is dense in $\mathcal{H}$ the set
\begin{equation}\label{dense}
\hspace{-5pt}\mathcal{E}_A(\mathcal{H}):=\textrm{span}\Big\{\mathscr{R}e(\varphi_{h}),\,\mathscr{I}m(\varphi_{h}),
\:\varphi_{h}(x)=e^{i\langle h,x\rangle_{\mathcal{H}}},\,h\in D(A^*)\Big\}
\end{equation}
is dense\footnote{The proof relies on the fact that the set of continuous functions is dense in $L^p(\mathcal{H},\mu)$ and goes through a finite-dimensional reduction, a localization and the Stone-Weierstrass theorem.} in $\mathcal{V}^p$ (cf.~\cite{DaPr}, Chapter 10 and \cite{DaPr-Zab04}, Chapter 9).  Hence for $w\in\mathcal{K}^p_{\mu}$ there exists a sequence $(\phi^{(k)})_{k\in\mathbb{N}}\subset\mathcal{E}_A(\mathcal{H})$ such that $\phi^{(k)}\to w$ in $\mathcal{V}^p$ as $k\to\infty$. Recall the projection $P_n$ and set $\phi^{(k)}_n(x):=\phi^{(k)}(P_nx)$ for $n\in\mathbb{N}$. Since $\phi^{(k)}$ is a finite linear combination of elements in $\mathcal{E}_A(\mathcal{H})$ and it is continuous and bounded alongside with $D\phi^{(k)}$, dominated convergence implies $\phi^{(k)}_n\to\phi^{(k)}$ in $\mathcal{V}^p$ as $n\to\infty$. It follows that $(\phi^{(k)}_n)_{k,n\in\mathbb{N}}$ is bounded in $\mathcal{V}^p$ and so is $(\phi^{(k)}_{n,0})_{k,n\in\mathbb{N}}$ where $\phi^{(k)}_{n,0}:=0\vee\phi^{(k)}_n=[\phi^{(k)}_n]^+$. Therefore by taking limits as $n\to\infty$ first, and as $k\to\infty$ afterwards, one obtains weak convergence in $\mathcal{V}^p$ of $\phi^{(k)}_{n,0}$ to some function $g$. However, $\big|\phi^{(k)}_{n,0}-w\big|=\big| [\phi^{(k)}_{n}]^+-[w]^+\big|\le\big|\phi^{(k)}_{n}-w\big|$ for all $x\in\mathcal{H}$, since $w\ge0$. Therefore dominated convergence implies $\phi^{(k)}_{n,0}\to w$ in $L^{p}(\mathcal{H},\mu)$ as limits are taken in the same order as before and we may conclude $g\equiv w$. Clearly, for $k$ fixed, $\phi^{(k)}_{n,0}\in\cap_{m\ge n}\mathcal{K}^p_{m,\mu}$ and the Lemma follows by setting $w_{k,n}:=\phi^{(k)}_{n,0}$.
\end{proof}
Recall the value function $\mathcal{U}_\alpha$ of the optimal stopping problem \eqref{OS2} and set $u_\alpha:=\mathcal{U}_{\alpha}-\Theta$. Then Assumption \ref{ass-psi}, Theorem \ref{cnv2} and the same bounds as those employed to obtain Lemma \ref{weak01} provide the following
\begin{lemma}\label{weak02}
There exists a sequence $(n_i)_{i\in\mathbb{N}}$ such that $n_i\to\infty$ as $i\to \infty$ and
$u^{(n_i)}_{\alpha}$ converges to $u_\alpha$ as $n_i\to\infty$, weakly in $L^p(0,T;\mathcal{V}^p)$ and strongly in $L^p(0,T; L^p(\mathcal{H},\mu))$, $1\leq p<\infty$.

Moreover, $\frac{\partial\,}{\partial\,t}u^{(n_i)}_{\alpha}$ converges to $\frac{\partial\,}{\partial\,t}u_{\alpha}$ as $n_i\to\infty$ weakly in $L^2(0,T;L^2(\mathcal{H},\mu))$.
\end{lemma}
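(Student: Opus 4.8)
The plan is to realize $u_\alpha:=\mathcal{U}_\alpha-\Theta$ as a weak limit of the finite-dimensional functions $u^{(n)}_\alpha=\mathcal{U}^{(n)}_\alpha-\Theta^{(n)}$ (cf.~\eqref{raid}), along a suitable subsequence in $n$, in complete analogy with the way $u^{(n)}_\alpha$ was obtained from $u^{(n)}_{\alpha,R}$ in Lemma \ref{weak01}. First I would carry the uniform bounds down to the index $n$. Proposition \ref{univ-est-sol} and Proposition \ref{bound-ddt} bound $u^{(n)}_{\alpha,R}$ and $\partial_t u^{(n)}_{\alpha,R}$ uniformly in $(R,n,\alpha)$; for fixed $(n,\alpha)$, Lemma \ref{weak01} gives the weak convergences $u^{(n)}_{\alpha,R_i}\rightharpoonup u^{(n)}_\alpha$ in $L^p(0,T;\mathcal{V}^p_n)$ and $\partial_t u^{(n)}_{\alpha,R_i}\rightharpoonup\partial_t u^{(n)}_\alpha$ in $L^2(0,T;L^2(\mathbb{R}^n,\mu_n))$, so by weak lower semicontinuity of the norms these bounds are inherited by $u^{(n)}_\alpha$ and $\partial_t u^{(n)}_\alpha$. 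Using $\mathcal{V}^p_n\subset\mathcal{V}^p$ (Remark \ref{conn}, \eqref{def-sp-V}) one concludes that $(u^{(n)}_\alpha)_n$ is bounded in $L^p(0,T;\mathcal{V}^p)$ and $(\partial_t u^{(n)}_\alpha)_n$ is bounded in $L^2(0,T;L^2(\mathcal{H},\mu))$, uniformly in $n$ (and in $\alpha$). Equivalently, the first bound is immediate from the facts that $u^{(n)}_\alpha$ is bounded by $2\overline{\Theta}$ (Assumption \ref{ass-psi}, \eqref{bdd-V}) and is Lipschitz in space with constant $\le L_\mathcal{U}+L_\Theta$ uniformly in $t$, exactly as in the proof of Proposition \ref{univ-est-sol}.

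Next I would invoke weak compactness. For $1<p<\infty$ the space $\mathcal{V}^p$ is separable and reflexive (it embeds as a closed subspace of $L^{2p}(\mathcal{H},\mu)\times L^{2p'}(\mathcal{H},\mu;\mathcal{H})$ via $v\mapsto(v,Dv)$), hence $L^p(0,T;\mathcal{V}^p)$ is reflexive, and $L^2(0,T;L^2(\mathcal{H},\mu))$ is Hilbert. So, via a diagonal extraction over a countable set of exponents $p$, one obtains a single subsequence $(n_i)_{i\in\mathbb{N}}$ with $n_i\to\infty$ along which $u^{(n_i)}_\alpha$ converges weakly in $L^p(0,T;\mathcal{V}^p)$ to some $u$ and $\partial_t u^{(n_i)}_\alpha$ converges weakly in $L^2(0,T;L^2(\mathcal{H},\mu))$ to some $\xi$.

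It remains to identify the limits. The key input is Theorem \ref{cnv2}: by \eqref{cv-4}, $\mathcal{U}^{(n)}_\alpha\to\mathcal{U}_\alpha$ strongly in $L^p(0,T;L^p(\mathcal{H},\mu))$; moreover $\Theta^{(n)}(t,x)=\Theta(t,P_nx)\to\Theta(t,x)$ pointwise by continuity of $\Theta$ and $P_nx\to x$, with $|\Theta^{(n)}|\le\overline{\Theta}$, so $\Theta^{(n)}\to\Theta$ in $L^p(0,T;L^p(\mathcal{H},\mu))$ by dominated convergence. Hence $u^{(n)}_\alpha\to u_\alpha$ strongly in $L^p(0,T;L^p(\mathcal{H},\mu))$ for every $1\le p<\infty$; since the strong limit is also the weak limit and weak limits are unique, $u=u_\alpha$, and in particular $u_\alpha\in L^p(0,T;\mathcal{V}^p)$. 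As every subsequence of $(u^{(n)}_\alpha)_n$ has a further subsequence converging weakly to the same limit $u_\alpha$, the full sequence converges weakly in $L^p(0,T;\mathcal{V}^p)$. For the time derivative I would use closedness of the distributional $\partial_t$: for $\phi\in C^\infty_c((0,T))$ and $v\in L^2(\mathcal{H},\mu)$, taking $i\to\infty$ in the identity $\int_0^T\phi'(t)\,(u^{(n_i)}_\alpha(t),v)_{\mu}\,dt=-\int_0^T\phi(t)\,(\partial_t u^{(n_i)}_\alpha(t),v)_{\mu}\,dt$, where $(\,\cdot\,,\,\cdot\,)_\mu$ denotes the scalar product of $L^2(\mathcal{H},\mu)$ — using the strong $L^2(0,T;L^2(\mathcal{H},\mu))$ convergence of $u^{(n_i)}_\alpha$ on the left and the weak convergence of the derivatives on the right — gives $\xi=\partial_t u_\alpha$, which completes the proof.

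The computations involved are routine; the only point that needs care is the first step together with the diagonal extraction — namely, that the $(R,n,\alpha)$-uniform bounds genuinely descend to $u^{(n)}_\alpha$ through the $R\to\infty$ limit already performed in Lemma \ref{weak01}, and that one subsequence $(n_i)$ can be made to serve all exponents $p$ at once while the weak limit still sits in $\mathcal{V}^p$. Both are handled by weak lower semicontinuity of norms and by the strong $L^p$-identification of the limit above.
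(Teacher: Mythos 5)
Your proposal is correct and follows essentially the paper's intended route: the paper itself only remarks that the lemma follows from Assumption \ref{ass-psi}, Theorem \ref{cnv2} and ``the same bounds as those employed to obtain Lemma \ref{weak01}'', i.e.\ precisely the uniform-in-$(R,n,\alpha)$ bounds of Propositions \ref{univ-est-sol} and \ref{bound-ddt} passed down to $u^{(n)}_\alpha$ by weak lower semicontinuity, followed by weak compactness in the reflexive spaces $L^p(0,T;\mathcal{V}^p)$ and $L^2(0,T;L^2(\mathcal{H},\mu))$ and identification of the limit through the strong $L^p$ convergence of $\mathcal{U}^{(n)}_\alpha-\Theta^{(n)}$. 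Your filling-in of the details (closedness of $\overline{D}$ giving reflexivity of $\mathcal{V}^p$, the subsequence argument, and the distributional identification of $\partial_t u_\alpha$) is exactly what the paper leaves implicit; note only that the weak statement is meaningful for $1<p<\infty$, since $\mathcal{V}^p$ is defined only in that range, which you already respect.
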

Denote by $(\cdot,\cdot)_\mu$ the scalar product in $L^2(\mathcal{H},\mu)$.
\begin{theorem}\label{infty-dim-vi}
For every $1<p<\infty$ the function $u_\alpha$ is a solution of the variational problem on $\mathcal{H}$
\begin{eqnarray}\label{d-2bis}
\left\{
\begin{array}{l}
u\in L^2(0,T;\mathcal{V}^p);\:\:\:\:\frac{\partial\,u}{\partial\,t}\in L^2(0,T;L^2(\mathcal{H},\mu));\\
\\
u(T,x)=0,\:\text{for $x\in\mathcal{H};$}\:\:\:\:u(t,x)\geq 0,\:\text{for $(t,x)\in[0,T]\times\mathcal{H}$;}\\
\\
\hspace{-3pt}\displaystyle{-\big(\frac{\partial u}{\partial t}(t),w-u(t)\big)_{\mu}+a^{(\alpha)}_\mu\big(u(t),w-u(t)\big)- \big(f_\alpha(t),w-u(t)\big)_{\mu}\geq0},\\
\hspace{+230pt}\text{for a.e.~$t\in[0,T]$ and for all $w\in \mathcal{K}^p_{\mu}$.}
\end{array}\right.
\end{eqnarray}
Moreover, $u_\alpha\in C([0,T]\times\mathcal{H})$.
\end{theorem}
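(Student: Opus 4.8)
The plan is to pass to the limit as $n_i\to\infty$ in the finite-dimensional variational inequality \eqref{d-2} of Theorem \ref{unb-dom}, written for $u^{(n_i)}_\alpha$ along the sequence $(n_i)$ of Lemma \ref{weak02}, mimicking the structure of the proof of Theorem \ref{unb-dom} with $R\to\infty$ replaced by $n_i\to\infty$. Fix $1<p<\infty$ and $w\in\mathcal{K}^p_\mu$. By Lemma \ref{t-func} there is a double-indexed family $(w_{k,n})$ with $w_{k,n}\in\cap_{m\ge n}\mathcal{K}^p_{m,\mu}$ and $\lim_k\lim_n w_{k,n}=w$ weakly in $\mathcal{V}^p$ and strongly in $L^p(\mathcal{H},\mu)$; write $w_k:=\lim_n w_{k,n}$. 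Since $w_{k,n_i}\in\mathcal{K}^p_{n_i,\mu}$, it is an admissible test function in \eqref{d-2} at level $n_i$. I would write that inequality for $u^{(n_i)}_\alpha$ with test function $w_{k,n_i}$, rearrange it exactly as in \eqref{d-1} so that the quadratic term $a^{(\alpha,n_i)}_\mu(u^{(n_i)}_\alpha,u^{(n_i)}_\alpha)$ sits alone on the right-hand side, integrate over an arbitrary $[t_1,t_2]\subset[0,T]$, and then take the limit first in $n_i$ (with $k$ fixed) and afterwards in $k$.

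For the limit in $n_i$: the pairing $(\partial_t u^{(n_i)}_\alpha,\,w_{k,n_i}-u^{(n_i)}_\alpha)_{\mu_{n_i}}$ passes to the limit because $\partial_t u^{(n_i)}_\alpha\rightharpoonup\partial_t u_\alpha$ weakly in $L^2(0,T;L^2(\mathcal{H},\mu))$ while $w_{k,n_i}-u^{(n_i)}_\alpha\to w_k-u_\alpha$ strongly there (Lemma \ref{weak02}); the term with $f_{\alpha,n_i}$ converges by \eqref{b-form-d} together with the same strong convergence; the bilinear term $\int_{t_1}^{t_2}a^{(\alpha,n_i)}_\mu(u^{(n_i)}_\alpha,w_{k,n_i})\,dt$ converges by combining the weak convergence $u^{(n_i)}_\alpha\rightharpoonup u_\alpha$ in $L^2(0,T;\mathcal{V}^p)$ with the dual-norm convergence \eqref{b-form-c}; and for the quadratic term I would split $\int_{t_1}^{t_2}a^{(\alpha,n_i)}_\mu(u^{(n_i)}_\alpha,u^{(n_i)}_\alpha)\,dt$ as in \eqref{all3}--\eqref{all5}, using that the principal part of $a^{(\alpha,n)}_\mu$ is nonnegative and that the lower-order part obeys $|a^{(\alpha,n)}_\mu(v,v)-a^{(\alpha,n)}_{\mu,0}(v,v)|\le C_p\|Dv\|_{L^{2p'}(\mathbb{R}^n,\mu_n;\mathbb{R}^n)}\|v\|_{L^{2p}(\mathbb{R}^n,\mu_n)}$ uniformly in $n$ (Assumption \ref{ass-Q} and the argument of Theorem \ref{uni-est}), combined with the uniform bounds of Propositions \ref{univ-est-sol} and \ref{bound-ddt}, to deduce $\liminf_i\int_{t_1}^{t_2}a^{(\alpha,n_i)}_\mu(u^{(n_i)}_\alpha,u^{(n_i)}_\alpha)\,dt\ge\int_{t_1}^{t_2}a^{(\alpha)}_\mu(u_\alpha,u_\alpha)\,dt$. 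Because this term appears on the ``$\ge$'' side with the favorable sign, the inequality survives the limit and yields \eqref{d-2bis} tested against $w_k$. Letting $k\to\infty$ is then routine: $u_\alpha$ is frozen, all terms depend continuously on $w$ via \eqref{univ-2} and the strong $L^p$ convergence $w_k\to w$, and since $t_1,t_2$ are arbitrary one recovers the pointwise-a.e.\ form of \eqref{d-2bis} for every $w\in\mathcal{K}^p_\mu$. The memberships $u_\alpha\in L^2(0,T;\mathcal{V}^p)$ and $\partial_t u_\alpha\in L^2(0,T;L^2(\mathcal{H},\mu))$, together with the weak lower semicontinuity used above, are contained in Lemma \ref{weak02}; $u_\alpha(T,\cdot)=0$ holds because only $\tau\equiv T$ is admissible in \eqref{OS2} at time $T$, so $\mathcal{U}_\alpha(T,x)=\Theta(T,x)$; $u_\alpha\ge0$ because $\tau\equiv t$ is admissible in \eqref{OS2}; and $u_\alpha=\mathcal{U}_\alpha-\Theta\in C([0,T]\times\mathcal{H})$ by continuity of $\Theta$ and Corollary \ref{cont1}, whose hypothesis is verified by Theorem \ref{unb-dom}.

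I expect the main obstacle to be the quadratic term: only weak convergence of $u^{(n_i)}_\alpha$ and of its Friedrichs gradient is available, so $\lim_i a^{(\alpha,n_i)}_\mu(u^{(n_i)}_\alpha,u^{(n_i)}_\alpha)$ cannot be identified, only bounded below, and the whole passage hinges on first rearranging the variational inequality so that this term carries the sign compatible with a $\liminf$ estimate --- precisely the device already used for Theorem \ref{unb-dom}. The secondary difficulty is bookkeeping: the test functions themselves must be approximated (hence the iterated limit of Lemma \ref{t-func}), and one must simultaneously control the discrepancies between $\mu_n$ and $\mu$, between $\mathcal{L}_{\alpha,n}$ and $\mathcal{L}_\alpha$, and between $f_{\alpha,n}$ and $f_\alpha$, all of which are handled by \eqref{b-form-c}, \eqref{b-form-d} and the uniform estimates of Propositions \ref{univ-est-sol} and \ref{bound-ddt}.
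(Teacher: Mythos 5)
Your proposal is correct and follows essentially the same route as the paper: test the finite-dimensional inequality of Theorem \ref{unb-dom} against the approximating functions $w_{k,n}$ of Lemma \ref{t-func}, integrate over $[t_1,t_2]$, pass to the limit in the approximation index using Lemma \ref{weak02}, \eqref{b-form-c}, \eqref{b-form-d} and the $\liminf$ device \eqref{all3}--\eqref{all5} for the quadratic term, then send $n,k\to\infty$ and localize in $t$. The only cosmetic difference is that you diagonalize the test-function index with the approximation level ($w_{k,n_i}$ at level $n_i$), whereas the paper keeps $w_{k,n}$ fixed and lets the level $m\ge n$ tend to infinity first; both are legitimate given that $w_{k,n}\in\cap_{m\ge n}\mathcal{K}^p_{m,\mu}$.
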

\begin{proof}
The continuity of $u_\alpha$ is a consequence of Corollary \ref{cont1} and Proposition \ref{unb-d}. For arbitrary $w\in\mathcal{K}^p_{\mu}$ take the corresponding approximating sequence $(w_{k,n})_{k,n\in\mathbb{N}}$ given by Lemma \ref{t-func}. For $k\in\mathbb{N}$ arbitrary but fixed, Theorems \ref{unb-dom}, Lemma \ref{t-func} and Remark \ref{conn} guarantee
\begin{align*}
-\big(\frac{\partial u^{(m)}_\alpha}{\partial t}(t),w_{k,n}-u^{(m)}_\alpha(t)\big)_{\mu}+ a_{\mu}^{(\alpha,m)}(u^{(m)}_\alpha(t),w_{k,n}-u^{(m)}_\alpha(t))-(f_{\alpha,m}(t),w_{k,n}-u^{(m)}
_\alpha(t))_\mu\geq0,
\end{align*}
for $m\geq n$ and a.e.~$t\in[0,T]$. In the limit as $m\to\infty$, Lemma \ref{weak02}, equations \eqref{b-form-c} and \eqref{b-form-d} and arguments similar to those used in the proof of Theorem \ref{unb-dom} give
\begin{align*}
\int_{t_1}^{t_2}\hspace{-2pt}\Big[\hspace{-2pt}-\big(\frac{\partial u_\alpha}{\partial t}(t),w_{k,n}-u_\alpha(t)\big)_{\mu}\hspace{-2pt}+a_{\mu}^{(\alpha)}(u_\alpha(t),w_{k,n}-u_\alpha(t))-
(f_\alpha(t),w_{k,n}-u_\alpha(t))_\mu\Big]dt\hspace{-2pt}\geq0.
\end{align*}
The proof now follows from Theorem \ref{t-func} by taking limits as $n,\,k\to\infty$ and then dividing by $t_2-t_1$ and letting $t_2-t_1\to0$.
\end{proof}
The existence of an optimal stopping time for $\mathcal{U}_\alpha$ of \eqref{OS2} is obtained by purely probabilistic considerations (cf.~Theorem \ref{os-t1} below). Two preliminary lemmas are needed. Given $(t,x)\in[0,T]\times\mathcal{H}$, let $\tau^\star_{\alpha,n}(t,x)$ be as in \eqref{bis} and define
\begin{equation}\label{tau}
\tau^\star_{\alpha}(t,x):=\inf\{s\geq t\,:\,\mathcal{U}_\alpha(s,X^{(\alpha)t,x}_s)=\Theta(s,X^{(\alpha)t,x}_s)\}\wedge T.
\end{equation}
\begin{lemma}\label{tdar}
{For each $(t,x)\in[0,T]\times\mathcal{H}$} there exists a subsequence $(\tau^\star_{\alpha,n_j}(t,x))_{j\in\mathbb{N}}$, with $n_j=n_j(t,x)$, such that $n_j\to\infty$ as $j\to\infty$ and
\begin{equation}\label{lim2}
\lim_{j\to\infty}(\tau^\star_{\alpha}(t,x)\wedge\tau^\star_{\alpha,n_j}(t,x))(\omega)=\tau^\star_{\alpha}(t,x)
(\omega),\quad\mathbb{P}\textrm{-a.e.}\:\omega\in\Omega.
\end{equation}
\end{lemma}
\begin{proof}
Fix $x_0\in \mathcal{H}$. There is no loss of generality if we consider the diffusions $X^{(\alpha)x_0}$ and $X^{(\alpha)x_0;n}$ starting at time zero as all results remain true for arbitrary initial time $t$. The proof of this Lemma is adapted from \cite{Ben-Lio82}, Chapter 3, Section 3, Theorem 3.7 (cf.~in particular p. 322).

Using Proposition \ref{c-1}, fix $\Omega_0\subset\Omega$ with $\mathbb{P}(\Omega_0)=1$ and a subsequence $(n_j)_{j\in\mathbb{N}}$, with $n_j=n_j(x_0)$, such that
\begin{eqnarray}\label{subseq}
\lim_{j\to\infty}\sup_{0\leq t\leq T}\left\|X^{(\alpha)x_0;n_j}_t(\omega)-X^{(\alpha)x_0}_t(\omega)\right\|_{\mathcal{H}}\to0,\quad\text{for all $\omega\in\Omega_0$.}
\end{eqnarray}
Since the starting point $x_0\in\mathcal{H}$ is fixed, to simplify the notation in the rest of the proof, we shall write $\tau^\star_{\alpha,n}$ and $\tau^\star_{\alpha}$ instead of $\tau^\star_{\alpha,n}(0,x_0)$ and $\tau^\star_{\alpha}(0,x_0)$, respectively.
The limit \eqref{lim2} is trivial if $\omega^\prime\in\Omega_0$ is such that $\tau^\star_\alpha(\omega^\prime)=0$. On the other hand, if $\omega^\prime\in\Omega_0$ is such that $\tau^\star_\alpha(\omega')>\delta$ for some $\delta=\delta_{x_0}>0$, then by \eqref{tau}
\begin{eqnarray*}
\mathcal{U}_\alpha(t,X^{(\alpha)x_0}_t(\omega^\prime))>\Theta(t,X^{(\alpha)x_0}_t(\omega^\prime)),\qquad t\in[0,\tau^\star_\alpha(\omega^\prime)-\delta].
\end{eqnarray*}
Since the map $t\mapsto X^{(\alpha)x_0}_t(\omega^\prime)$ is continuous and $[0,\tau^\star_\alpha(\omega^\prime)-\delta]$ is a compact set it follows that the set $\chi^\delta(\omega^\prime):=\{y\in\mathcal{H}:\,y=X^{(\alpha)x_0}_t(\omega^\prime)\,,\,t\in[0,\tau^\star_\alpha(\omega^\prime)-\delta]\}$ is a compact subset of $\mathcal{H}$. Therefore the continuous map $(t,x)\mapsto \mathcal{U}_\alpha(t,x)-\Theta(t,x)$ (cf.~Theorem \ref{infty-dim-vi}) attains its minimum on $[0,\tau^\star_\alpha(\omega)-\delta]\times\chi^\delta(\omega^\prime)$, call it $\rho(\delta,\omega^\prime)>0$. Then
\begin{eqnarray}\label{nu1}
\mathcal{U}_\alpha(t, X^{(\alpha)x_0}_t(\omega^\prime))\geq\Theta(t, X^{(\alpha)x_0}_t(\omega^\prime))+\rho(\delta,\omega^\prime),\qquad t\in[0,\tau^\star_\alpha(\omega^\prime)-\delta].
\end{eqnarray}
Recall from Theorem \ref{cnv2} and \eqref{psi-n2} that $\mathcal{U}^{(n)}_\alpha$ and $\Theta^{(n)}$ converge respectively to $\mathcal{U}_\alpha$ and $\Theta$, uniformly on compact subsets of $[0,T]\times\mathcal{H}$. Therefore there exists $n_\rho=n(\rho(\delta,\omega^\prime))\in(n_j)_{j\in\mathbb{N}}$, $n_\rho>0$ large enough such that
\begin{eqnarray}
&&\mathcal{U}^{(n_\rho)}_\alpha(t,y^{(n_\rho)})> \mathcal{U}_\alpha(t,y)-\frac{1}{4}\rho(\delta,\omega^\prime),\qquad(t,y)\in[0,\tau^\star_\alpha(\omega^\prime)-\delta]
\times\chi^\delta(\omega^\prime),\label{nu2}\\
&&\Theta^{(n_\rho)}(t,y^{(n_\rho)})< \Theta(t,y)+\frac{1}{4}\rho(\delta,\omega^\prime),\qquad(t,y)
\in[0,\tau^\star_\alpha(\omega^\prime)-\delta]\times\chi^\delta(\omega^\prime),\label{nu3}
\end{eqnarray}
and
\begin{equation}\label{nu4}
\sup_{0\leq t\leq T}\big\|X^{(\alpha)x_0;n_\rho}(\omega^\prime)-X^{(\alpha)x_0}(\omega^\prime)\big\|_{\mathcal{H}}
\leq\frac{1}{4L_\mathcal{U}\vee L_\Theta}\rho(\delta,\omega^\prime).
\end{equation}
Now \eqref{nu1}, \eqref{nu2} and \eqref{nu3} imply
\begin{eqnarray}\label{cane}
\mathcal{U}^{(n_\rho)}_\alpha(t,P_{n_\rho}X^{(\alpha)x_0}_t(\omega^\prime))> \Theta^{(n_\rho)}(t,P_{n_\rho}X^{(\alpha)x_0}_t(\omega^\prime))+\frac{1}{2}\rho(\delta,\omega^\prime),\qquad t\in[0,\tau^\star_\alpha(\omega^\prime)-\delta].
\end{eqnarray}
On the other hand Assumption \ref{ass-psi}, Proposition \ref{reg-v-fun} and the fact that $P_{n_\rho}X^{(\alpha)x_0;n_\rho}=X^{(\alpha)x_0;n_\rho}$ imply
\begin{align}
\hspace{-2pt}\sup_{0\leq t\leq T}\hspace{-1pt}\left|\mathcal{U}^{(n_\rho)}_\alpha(t,P_{n_\rho}X^{(\alpha)x_0}_t(\omega^\prime))-\mathcal{U}^{(n_\rho)}
_\alpha(t,X^{(\alpha)x_0;n_\rho}_t(\omega^\prime))\right|\hspace{-2pt}\leq\hspace{-2pt} L_\mathcal{U}\hspace{-3pt}\sup_{0\leq t\leq T}\hspace{-1pt}\left\|X^{(\alpha)x_0;n_\rho}_t(\omega^\prime)\hspace{-2pt}-\hspace{-2pt}X^{(\alpha)x_0}_t
(\omega^\prime)\right\|_{\mathcal{H}}
\label{cane2}
\end{align}
and
\begin{align}
\hspace{-2pt}\sup_{0\leq t\leq T}\hspace{-1pt}\left|\Theta^{(n_\rho)}(t,P_{n_\rho}X^{(\alpha)x_0}_t(\omega^\prime))-\Theta^{(n_\rho)}(t,X^{(\alpha)
x_0;n_\rho}_t(\omega^\prime))\right|\hspace{-2pt}\leq \hspace{-2pt}L_{\Theta}\hspace{-3pt}\sup_{0\leq t\leq T}\hspace{-1pt}\left\|X^{(\alpha)x_0;n_\rho}_t(\omega^\prime)\hspace{-2pt}-\hspace{-2pt}X^{(\alpha)x_0}_t
(\omega^\prime)\right\|
_{\mathcal{H}}\label{cane3}
\end{align}
which, together with \eqref{nu4} and \eqref{cane}, imply
\begin{eqnarray*}
\mathcal{U}^{(n_\rho)}_\alpha(t,X^{(\alpha)x_0;n_\rho}_t(\omega^\prime))> \Theta^{(n_\rho)}(t,X^{(\alpha)x_0;n_\rho}_t(\omega^\prime)),\qquad t\in[0,\tau^\star_\alpha(\omega^\prime)-\delta].
\end{eqnarray*}
It follows that $\tau^\star_{\alpha,n_\rho}(\omega^\prime)>\tau^\star_{\alpha}(\omega^\prime)-\delta$. Notice that $\rho(\delta,\omega^\prime)\to0$ as $\delta\to0$ and hence $n_\rho\to\infty$. Therefore $\tau^\star_{\alpha,n_\rho}(\omega^\prime)\wedge\tau^\star_{\alpha}(\omega^\prime)\to\tau^\star_{\alpha}(\omega^\prime)$ as $n_\rho\to\infty$, which is equivalent to say that \eqref{lim2} holds along a subsequence.
\end{proof}
Notice that arguments as in the proof of \eqref{lip-V} also give
\begin{align}\label{lip-al}
\sup_{0\le t\le T}\big|\mathcal{U}^{(n)}_\alpha(t,x^{(n)})-\mathcal{U}_\alpha(t,x)\big|\le L_{\mathcal{U}}\|x-x^{(n)}\|_{\mathcal{H}},
\end{align}
since the optimal stopping problems \eqref{OS1}, \eqref{OS2} and \eqref{OS3} are considered under the same filtration $\{\mathcal{F}_t,\,t\ge0\}$.
\begin{theorem}\label{os-t1}
The optimal stopping time of \eqref{OS2} is $\tau^\star_{\alpha}(t,x)$ as defined in \eqref{tau}. Moreover
\begin{align}\label{ualpha-dyn}
\mathcal{U}_\alpha(t,x)=\mathbb{E}
\left\{\mathcal{U}_\alpha(\sigma\wedge\tau_\alpha^\star,X^{(\alpha)t,x}_{\sigma\wedge\tau_\alpha^\star})
\right\}\quad\text{for all stopping times $t\le\sigma\le T$.}
\end{align}
\end{theorem}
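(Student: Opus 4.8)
The plan is to obtain \eqref{ualpha-dyn} by passing to the limit in the finite-dimensional identity \eqref{BB} along the subsequence $(n_j)_{j\in\mathbb{N}}$ supplied by Lemma \ref{tdar}, and then to read off the optimality of $\tau^\star_\alpha$ as a simple corollary. Fix a stopping time $t\le\sigma\le T$ and set $\sigma_j:=\sigma\wedge\tau^\star_\alpha(t,x)\wedge\tau^\star_{\alpha,n_j}(t,x)$; this is a stopping time and $\sigma_j\le\tau^\star_{\alpha,n_j}(t,x)$ $\mathbb{P}$-a.s., so Remark \ref{orrore} (equation \eqref{BB}) applied with $\sigma_j$ gives
$$\mathcal{U}^{(n_j)}_\alpha(t,x^{(n_j)})=\mathbb{E}\Big\{\mathcal{U}^{(n_j)}_\alpha\big(\sigma_j,X^{(\alpha)t,x;n_j}_{\sigma_j}\big)\Big\}.$$
Since $x^{(n_j)}\to x$ in $\mathcal{H}$, the left-hand side converges to $\mathcal{U}_\alpha(t,x)$: combine Theorem \ref{cnv2} (uniform convergence on the compact $\{x\}\cup\{x^{(n_j)}:j\}$) with the Lipschitz estimate \eqref{lip-V} (cf.\ also \eqref{lip-al}). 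It then remains to identify the limit of the right-hand side.

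For the right-hand side I would argue pathwise along the subsequence of Lemma \ref{tdar}, which by its construction \eqref{subseq} (Proposition \ref{c-1} and Remark \ref{unif}) also satisfies $\sup_{t\le s\le T}\|X^{(\alpha)t,x;n_j}_s-X^{(\alpha)t,x}_s\|_{\mathcal{H}}\to0$ $\mathbb{P}$-a.s. Fix $\omega$ outside a $\mathbb{P}$-null set. Then $\sigma_j(\omega)\to\sigma(\omega)\wedge\tau^\star_\alpha(t,x)(\omega)$ by Lemma \ref{tdar}; the continuous trajectories $s\mapsto X^{(\alpha)t,x;n_j}_s(\omega)$, $j\in\mathbb{N}$, together with $s\mapsto X^{(\alpha)t,x}_s(\omega)$, all lie in a common compact set $\mathcal{K}_\omega\subset\mathcal{H}$ (the union of a uniformly convergent sequence of continuous curves on $[t,T]$ together with its limit is relatively compact in the complete space $\mathcal{H}$); and $X^{(\alpha)t,x;n_j}_{\sigma_j}(\omega)\to X^{(\alpha)t,x}_{\sigma\wedge\tau^\star_\alpha}(\omega)$ by path continuity of $X^{(\alpha)t,x}$ and the uniform convergence of the approximations. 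Splitting $\mathcal{U}^{(n_j)}_\alpha(\sigma_j,X^{(\alpha)t,x;n_j}_{\sigma_j})-\mathcal{U}_\alpha(\sigma\wedge\tau^\star_\alpha,X^{(\alpha)t,x}_{\sigma\wedge\tau^\star_\alpha})$ into the difference $\mathcal{U}^{(n_j)}_\alpha-\mathcal{U}_\alpha$ evaluated at the single point $X^{(\alpha)t,x;n_j}_{\sigma_j}(\omega)\in\mathcal{H}^{(n_j)}$ (its own projection) plus the difference of $\mathcal{U}_\alpha$ at two points converging to each other, the first piece is bounded by $\sup_{[t,T]\times\mathcal{K}_\omega}|\mathcal{U}^{(n_j)}_\alpha(\cdot,\cdot^{(n_j)})-\mathcal{U}_\alpha(\cdot,\cdot)|\to0$ (Theorem \ref{cnv2}) and the second tends to $0$ by the joint continuity of $\mathcal{U}_\alpha$ from Theorem \ref{infty-dim-vi}. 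Hence the integrand converges $\mathbb{P}$-a.s., and since $0\le\mathcal{U}^{(n_j)}_\alpha\le\overline{\Theta}$ uniformly (cf.\ \eqref{bdd-V}), dominated convergence yields
$$\mathcal{U}_\alpha(t,x)=\mathbb{E}\Big\{\mathcal{U}_\alpha\big(\sigma\wedge\tau^\star_\alpha(t,x),X^{(\alpha)t,x}_{\sigma\wedge\tau^\star_\alpha(t,x)}\big)\Big\},$$
which is \eqref{ualpha-dyn}.

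Optimality of $\tau^\star_\alpha$ then follows by taking $\sigma=T$, so that $\sigma\wedge\tau^\star_\alpha=\tau^\star_\alpha$ and $\mathcal{U}_\alpha(t,x)=\mathbb{E}\{\mathcal{U}_\alpha(\tau^\star_\alpha,X^{(\alpha)t,x}_{\tau^\star_\alpha})\}$. On $\{\tau^\star_\alpha<T\}$, the continuity of $\mathcal{U}_\alpha$ and $\Theta$ and the definition \eqref{tau} force $\mathcal{U}_\alpha(\tau^\star_\alpha,X^{(\alpha)t,x}_{\tau^\star_\alpha})=\Theta(\tau^\star_\alpha,X^{(\alpha)t,x}_{\tau^\star_\alpha})$ (the closed set where $\mathcal{U}_\alpha=\Theta$ along the path contains its infimum $\tau^\star_\alpha$), while on $\{\tau^\star_\alpha=T\}$ the terminal identity $\mathcal{U}_\alpha(T,\cdot)=\Theta(T,\cdot)$, immediate from \eqref{OS2}, gives the same. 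Hence $\mathcal{U}_\alpha(t,x)=\mathbb{E}\{\Theta(\tau^\star_\alpha,X^{(\alpha)t,x}_{\tau^\star_\alpha})\}$, i.e.\ $\tau^\star_\alpha$ attains the supremum in \eqref{OS2}.

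The delicate step is the pathwise passage to the limit on the right-hand side: one must orchestrate simultaneously the convergence $\tau^\star_\alpha\wedge\tau^\star_{\alpha,n_j}\to\tau^\star_\alpha$ of Lemma \ref{tdar}, the $\mathbb{P}$-a.s.\ uniform-in-time convergence of the finite-dimensional diffusions, and the merely uniform-on-compacts convergence $\mathcal{U}^{(n_j)}_\alpha\to\mathcal{U}_\alpha$, ensuring that the random compact set $\mathcal{K}_\omega$ genuinely traps all the trajectories in play so that Theorem \ref{cnv2} may be invoked $\omega$ by $\omega$; once this is in place, the conclusion is a routine dominated-convergence argument powered by the uniform bound $\overline{\Theta}$.
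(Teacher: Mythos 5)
Your proof is correct and follows the same skeleton as the paper's: both start from the finite-dimensional identity of Remark \ref{orrore}, pass to the limit along the subsequence $(n_j)_{j\in\mathbb{N}}$ of Lemma \ref{tdar}, and finish with dominated convergence powered by the uniform bound $\overline{\Theta}$. Two points differ in execution. First, the order: you establish \eqref{ualpha-dyn} for an arbitrary stopping time $\sigma$ and then read off the optimality of $\tau^\star_\alpha$ by taking $\sigma=T$ together with the closed-set/terminal-condition argument identifying $\mathcal{U}_\alpha=\Theta$ at $(\tau^\star_\alpha,X^{(\alpha)t,x}_{\tau^\star_\alpha})$; the paper proves optimality first (its \eqref{vf1}--\eqref{last}), leaves that identification implicit, and disposes of general $\sigma$ with a one-line ``similar arguments'' remark. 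Your ordering is the cleaner of the two and makes that remark unnecessary. Second, the limit of the right-hand side: the paper uses the three-term $L^1$ decomposition \eqref{brad}, controlled by the Lipschitz estimates \eqref{lip-V} and \eqref{lip-al} together with Proposition \ref{c-1} and Jensen's inequality; you argue pathwise, trapping all trajectories in a random compact set $\mathcal{K}_\omega$ (the union of a uniformly convergent family of continuous curves on $[t,T]$ and its limit is indeed relatively compact in $\mathcal{H}$) so that the locally uniform convergence of Theorem \ref{cnv2} and the joint continuity of $\mathcal{U}_\alpha$ can be invoked $\omega$ by $\omega$. Your route needs only boundedness plus locally uniform convergence and bypasses the somewhat delicate estimate \eqref{lip-al}; the paper's route is quantitative, yielding an explicit $L^1$ bound in terms of $L_\mathcal{U}$, $L_\Theta$ and $\mathbb{E}\{\sup_{t\le s\le T}\|X^{(\alpha)t,x;n_j}_s-X^{(\alpha)t,x}_s\|_{\mathcal{H}}\}$. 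Both arguments are complete.
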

\begin{proof}
Given the initial data $(t,x)$, we adopt the simplified notation used in the proof of Lemma \ref{tdar}; that is, we set $\tau^\star_\alpha:=\tau^\star_{\alpha}(t,x)$ and $\tau^\star_{\alpha,n}:=\tau^\star_{\alpha,n}(t,x)$. By Remark \ref{orrore} we have
\begin{eqnarray}\label{vf1}
\mathcal{U}^{(n)}_\alpha(t,x^{(n)})=\mathbb{E}\left\{\mathcal{U}^{(n)}_\alpha(\tau^\star_{\alpha}\wedge\tau^\star_
{\alpha,n},X^{(\alpha)t,x;n}_{\tau^\star_{\alpha}\wedge\tau^\star_{\alpha,n}})\right\}.
\end{eqnarray}
In \eqref{vf1} take the subsequence $(n_j)_{j\in\mathbb{N}}$ of Lemma \ref{tdar} and apply Theorem \ref{cnv2} to obtain the convergence of $\mathcal{U}^{(n_j)}_\alpha(t,x^{(n_j)})$ to $\mathcal{U}_\alpha(t,x)$ as $j\to\infty$.

On the other hand
\begin{align}\label{brad}
\bigg|\mathbb{E}\Big\{\mathcal{U}^{(n_j)}_\alpha(\tau^\star_{\alpha}\wedge\tau^\star_{\alpha,n_j},
X^{(\alpha)t,x;n_j}_{\tau^\star_{\alpha}\wedge\tau^\star_{\alpha,n_j}})&-\mathcal{U}_\alpha(\tau^\star_{\alpha},X^
{(\alpha)t,x}_{\tau^\star_{\alpha}})\Big\}\bigg|\nonumber\\
&\leq\mathbb{E}\bigg\{\sup_{t\leq s\leq T}\bigg|\mathcal{U}^{(n_j)}_\alpha(s,X^{(\alpha)t,x;n_j}_s)-\mathcal{U}^{(n_j)}_\alpha(s,P_{n_j}X^{(\alpha)t,x}_s)
\bigg|\bigg\}\nonumber\\
&+\mathbb{E}\bigg\{\sup_{t\leq s\leq T}\bigg|\mathcal{U}^{(n_j)}_\alpha(s,P_{n_j}X^{(\alpha)t,x}_s)-\mathcal{U}_\alpha(s,X^{(\alpha)t,x}_s)\bigg|\bigg\}
\\
&+\mathbb{E}\bigg\{\bigg|\mathcal{U}_\alpha(\tau^\star_{\alpha}\wedge\tau^\star_{\alpha,n_j},X^{(\alpha)t,x}_{\tau^
\star_{\alpha}\wedge\tau^\star_{\alpha,n_j}})-\mathcal{U}_\alpha(\tau^\star_{\alpha},X^{(\alpha)t,x}_
{\tau^\star_{\alpha}})\bigg|\bigg\},\nonumber
\end{align}
where the first term on the right hand side goes to zero as $j\to\infty$ by \eqref{lip-V}, Proposition \ref{c-1} and Jensen's inequality. Similarly, the second term goes to zero by \eqref{lip-al} and dominated convergence, and the third term goes to zero by dominated convergence and Lemma \ref{tdar}.

In conclusion, by taking the limits in \eqref{vf1} along the subsequence $(n_j)_{j\in\mathbb{N}}$ we obtain
\begin{equation}\label{last}
\mathcal{U}_\alpha(t,x)=\mathbb{E}\left\{\mathcal{U}_\alpha(\tau_\alpha^\star,X^{(\alpha)t,x}_{\tau_\alpha^\star})
\right\}
=\mathbb{E}\left\{\Theta(\tau_\alpha^\star,X^{(\alpha)t,x}_{\tau_\alpha^\star})\right\},
\end{equation}
and the optimality of $\tau_\alpha^\star$ follows. Similar arguments are used to prove \eqref{ualpha-dyn} since Lemma \ref{tdar} implies  $\sigma\wedge\tau^\star_{\alpha}\wedge\tau^\star_{\alpha,n_j}\to\sigma\wedge\tau^\star_{\alpha}$ as $j\to\infty$.
\end{proof}


\subsubsection{Removal of the Yosida approximation}\label{sec-yos}

The function $u_\alpha$ in Theorem \ref{infty-dim-vi} solves the variational inequality associated to the Yosida approximation $A_\alpha$ of the unbounded operator $A$. In this section we study the limiting behavior, as $\alpha\to\infty$, of $u_\alpha$ and of the corresponding variational inequality by adopting both pro\-ba\-bi\-li\-stic and analytical tools.

When $\alpha\to\infty$ the term involving $A_\alpha$ in the bilinear form $a^{(\alpha)}_\mu(\,\cdot\,,\,\cdot\,)$ of \eqref{lup3} converges to a suitable operator that needs to be fully characterized. Let $w\in \mathcal{V}^p$ be given and define the linear functional $L^{(\alpha)}_{A}(w,\cdot)\in \mathcal{V}^{p\,*}$ by
\begin{equation}\label{rab1}
L^{(\alpha)}_{A}(w,u):=\int_{\mathcal{H}}\langle A_\alpha x,Du\rangle_{\mathcal{H}}\, w\, \mu(dx),\qquad u\in \mathcal{V}^p.
\end{equation}
It is easy to show that $L^{(\alpha)}_{A}(w,\cdot)$ is continuous by \eqref{stimaA} and any sequence $(L^{(\alpha_n)}_{A}(w,\cdot))_{n\in\mathbb{N}}$, with $\alpha_n\to\infty$ as $n\to\infty$, is a Cauchy sequence in $\mathcal{V}^{p\,*}$. In fact for $n>m$ arguments similar to those in \eqref{stimaA} give
\begin{align*}
|L^{(\alpha_n)}_{A}(w,u)-L^{(\alpha_m)}_{A}(w,u)|&=\left|\int_{\mathcal{H}}\langle (A_{\alpha_n}-A_{\alpha_m})x,Du\rangle_{\mathcal{H}}w\,\mu(dx)\right|\\
&\leq C_p\,Tr\big[(A_{\alpha_n}-A_{\alpha_m})Q(A_{\alpha_n}-A_{\alpha_m})^*\big]\,\lnr u\rnr_p\,\lnr w\rnr_p,
\end{align*}
and hence
\begin{eqnarray}\label{rab2}
\|L^{(\alpha_n)}_{A}(w,\,\cdot\,)-L^{(\alpha_m)}_{A}(w,\,\cdot\,)\|_{\mathcal{V}^{p\,*}}\leq C_p\,Tr\big[(A_{\alpha_n}-A_{\alpha_m})Q(A_{\alpha_n}-A_{\alpha_m})^*\big]\,\lnr w\rnr_p.
\end{eqnarray}
Since $A_\alpha\to A$ on $D(A)$ as $\alpha\to\infty$ and Assumption \ref{ass-Q} holds, \eqref{rab2} goes to zero as $m,n\to\infty$ and $(L^{(\alpha_n)}_{A}(w,\,\cdot\,))_{n\in\mathbb{N}}$ is Cauchy in $\mathcal{V}^{p\,*}$. Therefore, by completeness of $\mathcal{V}^{p\,*}$ there exists $\hat{L}_{A}(w,\,\cdot\,)\in \mathcal{V}^{p\,*}$ such that $L^{(\alpha)}_{A}(w,\,\cdot\,)\to \hat{L}_{A}(w,\cdot)$ as $\alpha\to\infty$ in $\mathcal{V}^{p\,*}$.

It suffices to characterize $\hat{L}_{A}(w,\cdot)$ on the set $\mathcal{E}_A(\mathcal{H})$ of \eqref{dense} since that is dense in $\mathcal{V}^p$. In order to do so notice that $A^*Du\in L^p(\mathcal{H},\mu)$ for $u\in\mathcal{E}_A(\mathcal{H})$ and \begin{align*}
\int_{\mathcal{H}}\langle A_\alpha x,Du\rangle_{\mathcal{H}}\, w \, \mu(dx)=\int_{\mathcal{H}}\langle x,A^*_\alpha Du\rangle_{\mathcal{H}}\, w \, \mu(dx).
\end{align*}
Now dominated convergence allows us to define a linear functional $L_{A}(w,\cdot)$ by setting
\begin{eqnarray}\label{rab3}
L_{A}(w,u):=\lim_{\alpha\to\infty}L^{(\alpha)}_{A}(w,u)=\int_{\mathcal{H}}\langle x,A^*D u\rangle_{\mathcal{H}}\, w \, \mu(dx),\quad\textrm{for}\:\:u\in\mathcal{E}_A(\mathcal{H}).
\end{eqnarray}
Clearly its domain $D(L_{A}(w,\cdot))$ contains $\mathcal{E}_A(\mathcal{H})$ and it is dense in $\mathcal{V}^p$. Since \eqref{stimaA} is uniform with respect to $n\in\mathbb{N}$ and $\alpha>0$ we also obtain
\begin{eqnarray}\label{ext}
|L_{A}(w,u)|\leq Tr\big[AQA^*\big] \lnr w\rnr_{p}\,\lnr u\rnr_p,\quad \textrm{for}\:\:u\in\mathcal{E}_A(\mathcal{H}).
\end{eqnarray}
By density arguments $L_{A}(w,\cdot)$ is continuously extended to the whole space $\mathcal{V}^p$ and the extended functional is denoted by $\bar{L}_{A}(w,\cdot)$.
It then follows
\begin{equation}\label{brut}
\hat{L}_{A}(w,\,\cdot\,):=\lim_{n\to\infty}L^{(\alpha_n)}_{A}(w,\,\cdot\,)=\bar{L}_{A}(w,\,\cdot\,)\qquad\textrm{in}
\:\:\mathcal{V}^{p\,*}.
\end{equation}

Note that, for $w\in L^2(0,T;\mathcal{V}^p)$ fixed, one has $\big(L^{(\alpha)}_A(w,\,\cdot\,)\big)_{\alpha>0}$ bounded in $L^{2}(0,T;\mathcal{V}^{p\,*})$ by \eqref{ext} (or by \eqref{stimaA}). Then for arbitrary $0\le t_1<t_2\le T$ and $u\in L^2(0,T;\mathcal{V}^p)$ we may define $T^{(\alpha)}_{A}(w,\cdot)(t_1,t_2)\in L^2(0,T;\mathcal{V}^{p})^*$ and $\bar{T}_{A}(w,\cdot)(t_1,t_2)\in L^2(0,T;\mathcal{V}^{p})^*$ by
\begin{equation}\label{rab4}
T^{(\alpha)}_{A}(w,u)(t_1,t_2):=\int_{t_1}^{t_2}{L^{(\alpha)}_A(w(t),u(t))dt},
\end{equation}
and
\begin{equation}\label{rab7}
\bar{T}_{A}(w,u)(t_1,t_2):=\int_{t_1}^{t_2}{\bar{L}_A(w(t),u(t))dt}.
\end{equation}
\begin{prop}\label{prot}
For arbitrary $0\le t_1<t_2\le T$, with $T^{(\alpha)}_{A}(w,\,\cdot\,)(t_1,t_2)$ and $\bar{T}_{A}(w,\,\cdot\,)(t_1,t_2)$ given by \eqref{rab4} and \eqref{rab7}, respectively, it holds that
\begin{equation}\label{onemore}
\lim_{\alpha\to\infty}\|(T^{(\alpha)}_{A}-\bar{T}_{A})(w,\,\cdot\,)(t_1,t_2)\|_{L^2(0,T;\mathcal{V}^{p})^*}=0.
\end{equation}
\end{prop}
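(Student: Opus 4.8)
The plan is to derive \eqref{onemore} from the pointwise-in-time convergence of $L^{(\alpha)}_A(w(t),\,\cdot\,)$ to $\bar L_A(w(t),\,\cdot\,)$ in $\mathcal{V}^{p\,*}$ — which is exactly what has been established just above for every fixed element of $\mathcal{V}^p$, cf.\ \eqref{brut} — by integrating in $t$ and invoking Lebesgue's dominated convergence theorem. Throughout, $0\le t_1<t_2\le T$ is fixed and, for $\alpha>0$, I write $g_\alpha(t):=\big(L^{(\alpha)}_A-\bar L_A\big)(w(t),\,\cdot\,)\in\mathcal{V}^{p\,*}$ for $t\in[0,T]$. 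Since $v\mapsto\big(L^{(\alpha)}_A-\bar L_A\big)(v,\,\cdot\,)$ is a bounded linear map from $\mathcal{V}^p$ into $\mathcal{V}^{p\,*}$ (by \eqref{stimaA} and \eqref{ext}), the duality pairing is continuous, and $t\mapsto w(t)$ and $t\mapsto u(t)$ are measurable with values in $\mathcal{V}^p$ for $w,u\in L^2(0,T;\mathcal{V}^p)$, both $t\mapsto\|g_\alpha(t)\|_{\mathcal{V}^{p\,*}}$ and $t\mapsto g_\alpha(t)(u(t))$ are measurable.

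The first step is an elementary duality estimate for the time-integrated functionals. For $u\in L^2(0,T;\mathcal{V}^p)$, by \eqref{rab4}, \eqref{rab7} and the Cauchy--Schwarz inequality,
\begin{align*}
\big|(T^{(\alpha)}_A-\bar T_A)(w,u)(t_1,t_2)\big|
&=\left|\int_{t_1}^{t_2}g_\alpha(t)(u(t))\,dt\right|\\
&\le\int_{t_1}^{t_2}\big\|g_\alpha(t)\big\|_{\mathcal{V}^{p\,*}}\,\lnr u(t)\rnr_p\,dt\\
&\le\left(\int_0^T\big\|g_\alpha(t)\big\|^2_{\mathcal{V}^{p\,*}}\,dt\right)^{\frac12}\|u\|_{L^2(0,T;\mathcal{V}^p)}.
\end{align*}
Taking the supremum over $u$ with $\|u\|_{L^2(0,T;\mathcal{V}^p)}\le1$ gives
\begin{equation*}
\big\|(T^{(\alpha)}_A-\bar T_A)(w,\,\cdot\,)(t_1,t_2)\big\|_{L^2(0,T;\mathcal{V}^{p})^*}\le\left(\int_0^T\big\|g_\alpha(t)\big\|^2_{\mathcal{V}^{p\,*}}\,dt\right)^{\frac12},
\end{equation*}
so it remains only to show that the integral on the right tends to $0$ as $\alpha\to\infty$.

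For this I would apply dominated convergence to the scalar integrand $t\mapsto\|g_\alpha(t)\|^2_{\mathcal{V}^{p\,*}}$. For a.e.\ $t\in[0,T]$ one has $w(t)\in\mathcal{V}^p$, and then $\|g_\alpha(t)\|_{\mathcal{V}^{p\,*}}\to0$ as $\alpha\to\infty$ by \eqref{brut}. Since the bounds \eqref{stimaA} and \eqref{ext} (which rely on Assumption \ref{ass-Q}) are uniform in $\alpha$, there is a constant $C>0$, depending only on $Tr[AQA^*]$, such that $\|g_\alpha(t)\|_{\mathcal{V}^{p\,*}}\le C\,\lnr w(t)\rnr_p$ for every $\alpha>0$ and a.e.\ $t$; and $t\mapsto C^2\lnr w(t)\rnr^2_p$ belongs to $L^1(0,T)$ because $w\in L^2(0,T;\mathcal{V}^p)$. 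Hence dominated convergence yields $\int_0^T\|g_\alpha(t)\|^2_{\mathcal{V}^{p\,*}}\,dt\to0$, and \eqref{onemore} follows.

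The one point that is not completely automatic is the \emph{uniformity in $\alpha$} of the dominating bound $\|L^{(\alpha)}_A(w,\,\cdot\,)\|_{\mathcal{V}^{p\,*}}\le C\,\lnr w\rnr_p$ — i.e.\ that the constant does not blow up with $\alpha$ even though $A_\alpha\to A$ with $A$ unbounded; but that is precisely the content of \eqref{stimaA}--\eqref{ext}, obtained via the identity $\int_{\mathbb{R}^n}|\langle x,y\rangle|^2\mu_n(dx)=\langle Q_ny,y\rangle$ and Assumption \ref{ass-Q}, so no new estimate is needed. Everything else is a routine combination of the Cauchy--Schwarz inequality and Lebesgue's dominated convergence theorem.
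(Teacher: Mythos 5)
Your proposal is correct and follows essentially the same route as the paper: a Cauchy--Schwarz duality estimate reducing the operator norm of $(T^{(\alpha)}_A-\bar T_A)(w,\,\cdot\,)(t_1,t_2)$ to the $L^2(0,T;\mathcal{V}^{p\,*})$-norm of $t\mapsto(L^{(\alpha)}_A-\bar L_A)(w(t),\,\cdot\,)$, followed by dominated convergence using the pointwise convergence \eqref{brut} and the $\alpha$-uniform bound $\big\|(L^{(\alpha)}_{A}-\bar{L}_{A})(w(t),\,\cdot\,)\big\|_{\mathcal{V}^{p\,*}}\le 2\,Tr[AQA^*]\lnr w(t)\rnr_p$ from \eqref{stimaA}--\eqref{ext}. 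Your write-up merely adds explicit measurability remarks that the paper leaves implicit.
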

\begin{proof}
A direct calculation gives $$\big|(T^{(\alpha)}_{A}-\bar{T}_{A})(w,u)(t_1,t_2)\big|\le\big\|(L^{(\alpha)}_{A}-\bar{L}_{A})(w,\,\cdot\,)
\big\|_{L^2(0,T;\mathcal{V}^{p\,*})}\big\|u\big\|
_{L^2(0,T;\mathcal{V}^{p\,*})}$$
and hence $\|(T^{(\alpha)}_{A}-\bar{T}_{A})(w,\,\cdot\,)(t_1,t_2)\|_{L^2(0,T;\mathcal{V}^{p})^*}\le
\big\|(L^{(\alpha)}_{A}-\bar{L}_{A})(w,\,\cdot\,)\big\|_{L^2(0,T;\mathcal{V}^{p\,*})}$. Now, since $\big\|(L^{(\alpha)}_{A}-\bar{L}_{A})(w(t),\,\cdot\,)
\big\|_{\mathcal{V}^{p\,*}}\le2 Tr[AQA^*]\lnr w(t)\rnr_p$ and the upper bound is independent of $\alpha$ and it belongs to $L^2(0,T)$, then dominated convergence theorem and \eqref{brut} give \eqref{onemore}.
\end{proof}

\begin{remark}\label{rotto}
Notice that for our gain function $\Theta$ we have $L^{(\alpha)}_{A}(\,\cdot\,,\Theta)\in L^2(0,T;\mathcal{V}^{p\,*})$. Moreover $T^{(\alpha)}_{A}(\,\cdot\,,\Theta)(t_1,t_2)\to \bar{L}_{A}(\,\cdot\,,\Theta)(t_1,t_2)$ in $L^2(0,T;\mathcal{V}^p)^*$ as $\alpha\to\infty$, for all $0\le t_1<t_2\le T$, by arguments similar to those used in the proof of Proposition \ref{prot}.
\end{remark}
\noindent For $t\in[0,T]$ define $F(\,\cdot\,)(t)\in \mathcal{V}^{p\,*}$ by
\begin{equation}\label{ruiz}
F(w)(t):=\Big(\frac{\partial\Theta}{\partial t}(t)+\frac{1}{2}Tr\big[\sigma\sigma^*D^2\Theta(t)\big],w\Big)_{\hspace{-2pt}\mu}+\bar{L}_{A}(w,\Theta(t)),\qquad\text{for all $w\in \mathcal{V}^p$}.
\end{equation}
Then, with $f_\alpha$ as in \eqref{gen-inf2}, from dominated convergence, Assumption \ref{ass-psi} and Remark \ref{rotto} follows that
\begin{equation}\label{prot2}
\lim_{\alpha\to\infty}\Big\|\int_{t_1}^{t_2}{\left[(f_\alpha(t),\,\cdot\,)_\mu-F(\cdot)(t)\right]dt}
\Big\|_{L^2(0,T;\mathcal{V}^{p})^*}=0
\end{equation}
for all $0\le t_1<t_2\le T$.

It is natural to consider the bilinear form associated to the infinitesimal generator of \eqref{SDE-infty},
\begin{align}\label{lup4}
a_{\mu}&(u,w):=\int_{\mathcal{H}}\frac{1}{2}\langle B\,Du,D w\rangle _\mathcal{H}\,\mu(dx)+\hspace{-4pt}\int_{\mathcal{H}}\hspace{-6pt}\langle\, \hat{C},D u\rangle_{\mathcal{H}}\, w\,\mu(dx)-\bar{L}_A(w,u)
\end{align}
for $u,w\in L^2(0,T;\mathcal{V}^p)$, and with $B$ as in \eqref{lup3} and $\hat{C}=\frac{1}{2}\left(Tr[D\sigma]_{\mathcal{H}}\sigma+D\sigma\cdot\sigma-\sigma\sigma^{*}Q^{-1}x\right)$. We set $\hat{u}:=\mathcal{U}-\Theta$ (see \eqref{OS1}). By Theorem \ref{cnv1} and the same bounds as those used to prove Lemma \ref{weak01} we obtain
\begin{lemma}\label{weak03}
There exists a sequence $(\alpha_i)_{i\in\mathbb{N}}$ such that $\alpha_i\to\infty$ as $i\to\infty$ and
$u_{\alpha_i}$ converges to $\hat{u}$ as $\alpha_i\to\infty$, weakly in $L^p(0,T;\mathcal{V}^p)$ and strongly in $L^p(0,T; L^p(\mathcal{H},\mu))$, $1\leq p<\infty$.

Moreover, $\frac{\partial\,}{\partial\,t}u_{\alpha_i}$ converges to $\frac{\partial\,}{\partial\,t}\hat{u}$ as $\alpha_i\to\infty$ weakly in $L^2(0,T;L^2(\mathcal{H},\mu))$.
\end{lemma}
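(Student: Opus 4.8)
The plan is to reproduce the compactness argument already carried out for Lemma~\ref{weak01} and Lemma~\ref{weak02}, this time passing to the limit in $\alpha$ rather than in $R$ or $n$. The starting point is that the a~priori bounds of Proposition~\ref{univ-est-sol} and Proposition~\ref{bound-ddt} are uniform in the whole triple $(R,n,\alpha)$. Since weak and weak-$*$ limits do not increase norms, letting first $R\to\infty$ along the sequence of Lemma~\ref{weak01} and then $n\to\infty$ along the sequence of Lemma~\ref{weak02} shows that $(u_\alpha)_{\alpha>0}$ is bounded in $L^p(0,T;\mathcal{V}^p)$ for every $1\le p<\infty$ and that $\big(\tfrac{\partial}{\partial t}u_\alpha\big)_{\alpha>0}$ is bounded in $L^2(0,T;L^2(\mathcal{H},\mu))$, with constants independent of $\alpha$; in particular $|u_\alpha|\le 2\overline{\Theta}$ and $\|Du_\alpha\|_{\mathcal{H}}\le L_{\mathcal{U}}+L_\Theta$ a.e., uniformly in $\alpha$.

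Next I would invoke the reflexivity of $L^2(0,T;L^2(\mathcal{H},\mu))$ and of $L^p(0,T;\mathcal{V}^p)$ for $1<p<\infty$ (the latter holds because $\mathcal{V}^p$ is reflexive, being identified through $v\mapsto(v,Dv)$ with a closed subspace of the reflexive space $L^{2p}(\mathcal{H},\mu)\times L^{2p'}(\mathcal{H},\mu;\mathcal{H})$; the case $p=1$ is handled separately via the uniform $L^\infty$-bounds on $u_\alpha$ and $Du_\alpha$, with weak-$*$ convergence replacing weak convergence in the $L^\infty$-component). A diagonal extraction over a countable set of exponents, which by those uniform sup-bounds suffices simultaneously for all $1\le p<\infty$, then yields a sequence $\alpha_i\to\infty$ along which $u_{\alpha_i}\rightharpoonup\hat v$ in $L^p(0,T;\mathcal{V}^p)$ and $\tfrac{\partial}{\partial t}u_{\alpha_i}\rightharpoonup g$ in $L^2(0,T;L^2(\mathcal{H},\mu))$.

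It then remains to identify the limits. Because $u_\alpha=\mathcal{U}_\alpha-\Theta$ and $\hat u=\mathcal{U}-\Theta$, equation~\eqref{cv-2} of Theorem~\ref{cnv1} gives $u_\alpha\to\hat u$ strongly in $L^p(0,T;L^p(\mathcal{H},\mu))$ for every $1\le p<\infty$, which is already the asserted strong convergence; uniqueness of weak limits then forces $\hat v=\hat u$, and since the image of $v\mapsto(v,Dv)$ is weakly closed one gets $Du_{\alpha_i}\rightharpoonup D\hat u$ and $\hat u\in L^2(0,T;\mathcal{V}^p)$. Finally, from $u_{\alpha_i}\to\hat u$ in $L^2(0,T;L^2(\mathcal{H},\mu))$ and $\tfrac{\partial}{\partial t}u_{\alpha_i}\rightharpoonup g$ in the same space, testing against functions $\phi(t)\psi(x)$ with $\phi\in C^\infty_c((0,T))$ identifies $g=\tfrac{\partial}{\partial t}\hat u$ by closedness of the distributional time-derivative, so $\tfrac{\partial}{\partial t}\hat u\in L^2(0,T;L^2(\mathcal{H},\mu))$. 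I expect the only delicate point to be the bookkeeping: checking that the uniform-in-$\alpha$ character of all the bounds genuinely survives the two previous passages to the limit ($R\to\infty$ and $n\to\infty$), and arranging a single subsequence valid for every $p$ at once. The substantive analytic estimates — the uniform gradient bound and the uniform bound on the time derivative — have already been established in Propositions~\ref{univ-est-sol} and~\ref{bound-ddt}.
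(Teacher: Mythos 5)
Your proposal is correct and follows essentially the same route as the paper, which proves this lemma (like Lemmas \ref{weak01} and \ref{weak02}) by combining the $(R,n,\alpha)$-uniform bounds of Propositions \ref{univ-est-sol} and \ref{bound-ddt} with weak compactness, and identifying the limit via the strong convergence $\mathcal{U}_\alpha\to\mathcal{U}$ of Theorem \ref{cnv1} together with the closability of $D$ and of the distributional time derivative. Your write-up merely makes explicit the bookkeeping (reflexivity of $\mathcal{V}^p$, diagonal extraction over $p$, weak closedness of the graph of $\overline{D}$) that the paper leaves implicit.
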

\noindent The next Theorem generalizes Theorem \ref{infty-dim-vi} to the case of unbounded operator $A$.
\begin{theorem}\label{inf-yos-vi}
For every $1<p<\infty$ the function $\hat{u}$ is a solution of the variational problem on $\mathcal{H}$
\begin{eqnarray}\label{d-2tris}
\left\{
\begin{array}{l}
u\in L^2(0,T;\mathcal{V}^p);\:\:\:\:\frac{\partial\,u}{\partial\,t}\in L^2(0,T;L^2(\mathcal{H},\mu));\\
\\
u(T,x)=0,\:x\in\mathcal{H};\:\:\:\:u(t,x)\geq 0,\: (t,x)\in[0,T]\times\mathcal{H};\\
\\
\hspace{-3pt}\displaystyle{-\big(\frac{\partial u}{\partial t}(t),w-u(t)\big)_{\mu}+a_\mu\big(u(t),w-u(t)\big)- F\big(w-u(\,\cdot\,)\big)(t)\geq0},\\
\hspace{+190pt}\text{for a.e.~$t\in[0,T]$ and for all $w\in \mathcal{K}^p_{\mu}$.}
\end{array}\right.
\end{eqnarray}
Moreover, $\hat{u}\in C([0,T]\times\mathcal{H})$.
\end{theorem}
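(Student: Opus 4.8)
The plan is to obtain \eqref{d-2tris} by passing to the limit $\alpha_i\to\infty$ along the sequence of Lemma~\ref{weak03} in the integrated form of the variational inequality \eqref{d-2bis} satisfied by $u_{\alpha_i}$, following closely the scheme already used for Theorems~\ref{unb-dom} and \ref{infty-dim-vi}. Several ingredients come for free: continuity of $\hat{u}=\mathcal{U}-\Theta$ on $[0,T]\times\mathcal{H}$ follows from Corollary~\ref{cont1} and Corollary~\ref{ascoli} together with the continuity of $\Theta$; the memberships $\hat{u}\in L^2(0,T;\mathcal{V}^p)$ and $\frac{\partial\hat{u}}{\partial t}\in L^2(0,T;L^2(\mathcal{H},\mu))$ are the content of Lemma~\ref{weak03}, since weak limits inherit the $\alpha$-uniform bounds of Propositions~\ref{univ-est-sol} and \ref{bound-ddt}; and the terminal and sign conditions $\hat{u}(T,\cdot)=0$ and $\hat{u}\geq0$ are immediate from $\mathcal{U}(T,x)=\Theta(T,x)$ (only $\tau=T$ is admissible at time $T$) and $\mathcal{U}(t,x)\geq\Theta(t,x)$ (take $\tau=t$).

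For the inequality itself, fix $w\in\mathcal{K}^p_{\mu}$ and $0\le t_1<t_2\le T$ and integrate \eqref{d-2bis} over $[t_1,t_2]$. The time-derivative term converges because $\frac{\partial u_{\alpha_i}}{\partial t}\rightharpoonup\frac{\partial\hat{u}}{\partial t}$ weakly and $w-u_{\alpha_i}\to w-\hat{u}$ strongly in $L^2(0,T;L^2(\mathcal{H},\mu))$ (Lemma~\ref{weak03}). The source term is handled via \eqref{prot2} and Remark~\ref{rotto}: $(f_{\alpha_i}(t),\cdot)_\mu-F(\cdot)(t)$ equals the functional $\bar{L}_A(\cdot,\Theta(t))-L^{(\alpha_i)}_A(\cdot,\Theta(t))$, whose $L^2(0,T;\mathcal{V}^p)^*$-norm tends to $0$, while $w-u_{\alpha_i}$ stays bounded in $L^2(0,T;\mathcal{V}^p)$; combining this with the weak convergence $w-u_{\alpha_i}\rightharpoonup w-\hat{u}$ tested against the fixed functional $F$ gives $\int_{t_1}^{t_2}(f_{\alpha_i},w-u_{\alpha_i})_\mu\,dt\to\int_{t_1}^{t_2}F(w-\hat{u}(\cdot))(t)\,dt$. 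For the bilinear form I would exploit the identity $\overline{C}^{(\alpha)}=\hat{C}-A_\alpha x$, which rewrites $a^{(\alpha)}_\mu(u,w)=a_\mu(u,w)+\bar{L}_A(w,u)-L^{(\alpha)}_A(w,u)$ (cf.~\eqref{lup3}, \eqref{lup4}); then $\int_{t_1}^{t_2}a_\mu(u_{\alpha_i},w)\,dt\to\int_{t_1}^{t_2}a_\mu(\hat{u},w)\,dt$ by weak convergence against the continuous functional $a_\mu(\cdot,w)$ (continuity from \eqref{univ-2} and \eqref{ext}), while the correction $\int_{t_1}^{t_2}[\bar{L}_A(w,u_{\alpha_i})-L^{(\alpha_i)}_A(w,u_{\alpha_i})]\,dt\to0$ by Proposition~\ref{prot}.

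The remaining, and main, difficulty is the lower semicontinuity of the quadratic term $\int_{t_1}^{t_2}a^{(\alpha_i)}_\mu(u_{\alpha_i},u_{\alpha_i})\,dt$. Using $a^{(\alpha_i)}_\mu(u_{\alpha_i},u_{\alpha_i})=a^{(\alpha_i)}_\mu(u_{\alpha_i}-\hat{u},u_{\alpha_i}-\hat{u})+a^{(\alpha_i)}_\mu(\hat{u},u_{\alpha_i})+a^{(\alpha_i)}_\mu(u_{\alpha_i}-\hat{u},\hat{u})$, the last two terms converge respectively to $\int_{t_1}^{t_2}a_\mu(\hat{u},\hat{u})\,dt$ and to $0$ by the weak$\times$strong-functional arguments of the previous paragraph. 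For the first term I would argue exactly as in \eqref{all4}: discard the nonnegative symmetric part $\frac12\int_{\mathcal{H}}\langle\sigma\sigma^{*}Dv,Dv\rangle_{\mathcal{H}}\,\mu(dx)\geq0$ (with $v=u_{\alpha_i}-\hat{u}$) and bound the first-order remainder from below by $-C\,\|D(u_{\alpha_i}-\hat{u})\|_{L^2(0,T;L^{2p'}(\mathcal{H},\mu;\mathcal{H}))}\,\|u_{\alpha_i}-\hat{u}\|_{L^2(0,T;L^{2p}(\mathcal{H},\mu))}$, where the constant $C$ is independent of $\alpha_i$ precisely because the bound \eqref{stimaA} on the $A_\alpha$-contribution is $\alpha$-uniform, thanks to Assumption~\ref{ass-Q}. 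Since $(u_{\alpha_i}-\hat{u})$ is bounded in $L^2(0,T;\mathcal{V}^p)$ and, being uniformly bounded by $2\overline{\Theta}$ and strongly convergent in $L^p(0,T;L^p(\mathcal{H},\mu))$, converges to $0$ strongly in every $L^q(0,T;L^q(\mathcal{H},\mu))$, the lower bound tends to $0$, whence $\liminf_i\int_{t_1}^{t_2}a^{(\alpha_i)}_\mu(u_{\alpha_i},u_{\alpha_i})\,dt\geq\int_{t_1}^{t_2}a_\mu(\hat{u},\hat{u})\,dt$. Combining this with $\int_{t_1}^{t_2}a^{(\alpha_i)}_\mu(u_{\alpha_i},w)\,dt\to\int_{t_1}^{t_2}a_\mu(\hat{u},w)\,dt$ yields $\limsup_i\int_{t_1}^{t_2}a^{(\alpha_i)}_\mu(u_{\alpha_i},w-u_{\alpha_i})\,dt\le\int_{t_1}^{t_2}a_\mu(\hat{u},w-\hat{u})\,dt$. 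Taking $\limsup$ in the integrated \eqref{d-2bis}, then dividing by $t_2-t_1$ and letting $t_2-t_1\to0$ along Lebesgue points (as in the proof of Theorem~\ref{infty-dim-vi}, using a countable dense subset of $\mathcal{K}^p_{\mu}$ and continuity to recover all $w$), produces the pointwise a.e.\ inequality of \eqref{d-2tris}. I expect the $\alpha$-uniform lower semicontinuity of the non-coercive bilinear form — and thus the role of Assumption~\ref{ass-Q} — to be the only genuinely delicate point; everything else is a routine rerun of the earlier limiting arguments.
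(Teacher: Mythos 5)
Your argument is correct and is essentially the proof the paper omits: its one-line sketch invokes exactly Lemma \ref{weak03}, Proposition \ref{prot}, \eqref{prot2} and the limiting scheme of Theorem \ref{infty-dim-vi}, and you have reconstructed that faithfully — including the correct observation that the step is simpler here because $\mathcal{K}^p_{\mu}$ does not depend on $\alpha$, so no analogue of Lemma \ref{t-func} is needed, and the identity $\overline{C}^{(\alpha)}=\hat{C}-A_\alpha x$ reduces everything to Proposition \ref{prot} plus the $\alpha$-uniform bound \eqref{stimaA}. The only (immaterial) slip is a sign: $(f_{\alpha_i}(t),\cdot)_\mu-F(\cdot)(t)$ equals $L^{(\alpha_i)}_{A}(\cdot,\Theta(t))-\bar{L}_{A}(\cdot,\Theta(t))$, the negative of what you wrote, which changes nothing since only its $\mathcal{V}^{p\,*}$-norm is used.
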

\noindent We omit the proof which follows from Lemma \ref{weak03}, Proposition \ref{prot}, \eqref{prot2} and it goes through arguments similar to (but simpler than) those adopted in the proof of Theorem \ref{infty-dim-vi}. Continuity of the solution is a consequence of Corollary \ref{ascoli}.

The optimal stopping time of $\mathcal{U}$ is found by probabilistic arguments as in Section \ref{refer}. For $(t,x)\in[0,T]\times\mathcal{H}$, let $\tau^\star_{\alpha}(t,x)$ be defined as in \eqref{tau} and set
\begin{equation}\label{gnom}
\tau^\star(t,x):=\inf\{s\geq t\,:\,\mathcal{U}(s,X^{t,x}_s)=\Theta(s,X^{t,x}_s)\}\wedge T.
\end{equation}
\begin{lemma}\label{tdar2}
{For each $(t,x)\in[0,T]\times\mathcal{H}$} there exists a sequence $(\alpha_j)_{j\in\mathbb{N}}$, with $\alpha_j=\alpha_j(t,x)$, such that $\alpha_j\to\infty$ as $j\to\infty$ and
\begin{equation}
\lim_{j\to\infty}(\tau^\star(t,x)\wedge\tau^\star_{\alpha_j}(t,x))(\omega)=\tau^\star(t,x)(\omega),
\quad\mathbb{P}\textrm{-a.e.}\:\omega\in\Omega.
\end{equation}
\end{lemma}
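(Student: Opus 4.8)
The plan is to reproduce the argument of Lemma~\ref{tdar}, with the finite-dimensional reduction $X^{(\alpha)x;n}\to X^{(\alpha)x}$, $(\mathcal{U}^{(n)}_\alpha,\Theta^{(n)})\to(\mathcal{U}_\alpha,\Theta)$ there replaced here by the Yosida approximation $X^{(\alpha)x}\to X^{x}$, $\mathcal{U}_\alpha\to\mathcal{U}$, the obstacle $\Theta$ staying fixed. First I would use Proposition~\ref{conv-yos} to pick a sequence $\alpha_j\to\infty$ and a full-probability set $\Omega_0\subset\Omega$ such that $\sup_{t\le s\le T}\|X^{(\alpha_j)t,x}_s(\omega)-X^{t,x}_s(\omega)\|_{\mathcal{H}}\to0$ as $j\to\infty$ for every $\omega\in\Omega_0$. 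I would also note that $\mathcal{U}_\alpha=u_\alpha+\Theta$ is bounded (by $\overline{\Theta}$) and continuous for every $\alpha>0$, by Theorem~\ref{infty-dim-vi} and Assumption~\ref{ass-psi}; hence Corollary~\ref{ascoli} applies and upgrades the convergence of Theorem~\ref{cnv1} to $\mathcal{U}_\alpha\to\mathcal{U}$ uniformly on every compact $[0,T]\times\mathcal{K}\subset[0,T]\times\mathcal{H}$, while the continuity of $(s,y)\mapsto\mathcal{U}(s,y)-\Theta(s,y)$ is provided by Theorem~\ref{inf-yos-vi}.

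Next I would fix $\omega'\in\Omega_0$; the case $\tau^\star(t,x)(\omega')=0$ is trivial, so assume $\tau^\star(t,x)(\omega')>\delta$ for some $\delta>0$. By path continuity and \eqref{gnom}, the set $\chi^\delta(\omega'):=\{X^{t,x}_s(\omega'):s\in[t,\tau^\star(t,x)(\omega')-\delta]\}$ is a compact subset of $\mathcal{H}$; since $s<\tau^\star(t,x)(\omega')$ there, one has $\mathcal{U}(s,X^{t,x}_s(\omega'))>\Theta(s,X^{t,x}_s(\omega'))$, so the continuous map $\mathcal{U}-\Theta$ attains a strictly positive minimum $\rho=\rho(\delta,\omega')>0$ on the set $\{(s,X^{t,x}_s(\omega')):s\in[t,\tau^\star(t,x)(\omega')-\delta]\}$. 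The key step is then the estimate, valid for every $s\in[t,\tau^\star(t,x)(\omega')-\delta]$,
\begin{align*}
\mathcal{U}_{\alpha_j}\big(s,X^{(\alpha_j)t,x}_s(\omega')\big)\,\ge\,\Theta\big(s,X^{(\alpha_j)t,x}_s(\omega')\big)+\rho-(L_\mathcal{U}+L_\Theta)\sup_{t\le r\le T}\big\|X^{(\alpha_j)t,x}_r(\omega')-X^{t,x}_r(\omega')\big\|_{\mathcal{H}}-\sup_{[0,T]\times\chi^\delta(\omega')}\big|\mathcal{U}_{\alpha_j}-\mathcal{U}\big|,
\end{align*}
which one obtains by inserting $\mathcal{U}(s,X^{t,x}_s(\omega'))$ and $\Theta(s,X^{t,x}_s(\omega'))$, using the global Lipschitz bounds with constants $L_\mathcal{U}$ for $\mathcal{U}_{\alpha_j}$ (cf.~Proposition~\ref{reg-v-fun}) and $L_\Theta$ for $\Theta$ (cf.~Assumption~\ref{ass-psi}), and the definition of $\rho$. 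For $j$ large enough (depending on $\delta,\omega'$ only) the two error terms on the right-hand side sum to less than $\rho$, so $\mathcal{U}_{\alpha_j}>\Theta$ along the approximate path throughout $[t,\tau^\star(t,x)(\omega')-\delta]$, which by \eqref{tau} forces $\tau^\star_{\alpha_j}(t,x)(\omega')\ge\tau^\star(t,x)(\omega')-\delta$. Since $\delta>0$ is arbitrary, $\liminf_{j\to\infty}\tau^\star_{\alpha_j}(t,x)(\omega')\ge\tau^\star(t,x)(\omega')$; combined with $\tau^\star_{\alpha_j}(t,x)\wedge\tau^\star(t,x)\le\tau^\star(t,x)$ this yields the asserted convergence for every $\omega'\in\Omega_0$, hence $\mathbb{P}$-a.s.

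I expect the only real obstacle to be the bookkeeping hidden in the displayed inequality: one must absorb \emph{simultaneously} the trajectory error $\sup_{t\le r\le T}\|X^{(\alpha_j)t,x}_r(\omega')-X^{t,x}_r(\omega')\|_{\mathcal{H}}$ and the value error $\sup_{[0,T]\times\chi^\delta(\omega')}|\mathcal{U}_{\alpha_j}-\mathcal{U}|$ into the single gap $\rho(\delta,\omega')$. This is legitimate precisely because $\rho(\delta,\omega')$ depends only on $\delta$ and on the fixed sample path $\omega'$, whereas both errors tend to $0$ as $j\to\infty$ along that path (the trajectory error by construction of $\Omega_0$, the value error by uniform convergence on the deterministic compact $\chi^\delta(\omega')$) --- the same mechanism as in Lemma~\ref{tdar}, and in fact a little simpler here, since the obstacle is the genuine gain $\Theta$ and no finite-dimensional projection $\Theta^{(n)}$ has to be controlled.
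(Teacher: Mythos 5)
Your proposal is correct and takes essentially the same approach as the paper, whose proof of this lemma consists precisely of the remark that it follows along the same lines as Lemma \ref{tdar}, replacing Proposition \ref{c-1} by Proposition \ref{conv-yos} and the compact-uniform convergence of $\mathcal{U}^{(n)}_\alpha,\Theta^{(n)}$ by Corollary \ref{ascoli} (with the obstacle $\Theta$ now fixed) --- exactly the substitution you carry out. Your quantifier order (fix $\delta$, take $j$ large, then let $\delta\to0$ to get the $\liminf$ along the single subsequence $(\alpha_j)$) is a slightly cleaner rendering of the paper's ``$\rho(\delta,\omega^\prime)\to0$ as $\delta\to0$'' step, but the mechanism is identical.
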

\begin{proof}
The proof follows along the same lines of that of Lemma \ref{tdar} and it is based on Corollary \ref{ascoli} and Proposition \ref{conv-yos}.
\end{proof}
\begin{theorem}\label{os-t2}
The stopping time $\tau^\star(t,x)$ is optimal for $\mathcal{U}(t,x)$.
\end{theorem}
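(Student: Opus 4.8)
The plan is to argue exactly as in the proof of Theorem \ref{os-t1}, except that the limit is now taken along $\alpha\to\infty$ rather than $n\to\infty$. The starting point is the dynamic programming identity \eqref{ualpha-dyn} for $\mathcal{U}_\alpha$: choosing there $\sigma=\tau^\star(t,x)$, with $\tau^\star$ the stopping time of \eqref{gnom}, gives
\[
\mathcal{U}_\alpha(t,x)=\mathbb{E}\left\{\mathcal{U}_\alpha\big(\tau^\star\wedge\tau^\star_\alpha,\,X^{(\alpha)t,x}_{\tau^\star\wedge\tau^\star_\alpha}\big)\right\},\qquad\alpha>0.
\]
I would then pass to the limit along the subsequence $(\alpha_j)_{j\in\mathbb{N}}$ furnished by Lemma \ref{tdar2}, for which $\tau^\star\wedge\tau^\star_{\alpha_j}\to\tau^\star$ $\mathbb{P}$-a.s.; if needed one first refines it so that, by Proposition \ref{conv-yos}, $\sup_{t\le s\le T}\|X^{(\alpha_j)t,x}_s-X^{t,x}_s\|_\mathcal{H}\to0$ $\mathbb{P}$-a.s. as well.

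On the left-hand side, $\mathcal{U}_{\alpha_j}(t,x)\to\mathcal{U}(t,x)$ by \eqref{cv-1}. For the right-hand side I would use the same three-term decomposition as in \eqref{brad}:
\begin{align*}
&\Big|\mathbb{E}\big\{\mathcal{U}_{\alpha_j}(\tau^\star\wedge\tau^\star_{\alpha_j},X^{(\alpha_j)t,x}_{\tau^\star\wedge\tau^\star_{\alpha_j}})-\mathcal{U}(\tau^\star,X^{t,x}_{\tau^\star})\big\}\Big|
\le\mathbb{E}\Big\{\sup_{t\le s\le T}\big|\mathcal{U}_{\alpha_j}(s,X^{(\alpha_j)t,x}_s)-\mathcal{U}_{\alpha_j}(s,X^{t,x}_s)\big|\Big\}\\
&\qquad+\mathbb{E}\Big\{\sup_{t\le s\le T}\big|\mathcal{U}_{\alpha_j}(s,X^{t,x}_s)-\mathcal{U}(s,X^{t,x}_s)\big|\Big\}+\mathbb{E}\Big\{\big|\mathcal{U}(\tau^\star\wedge\tau^\star_{\alpha_j},X^{t,x}_{\tau^\star\wedge\tau^\star_{\alpha_j}})-\mathcal{U}(\tau^\star,X^{t,x}_{\tau^\star})\big|\Big\}.
\end{align*}
The first term vanishes by the Lipschitz bound \eqref{lip-V} for $\mathcal{U}_\alpha$ (valid with the same constant $L_\mathcal{U}$), Jensen's inequality and Proposition \ref{conv-yos}. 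For the second, the paths $s\mapsto X^{t,x}_s$ are a.s.\ continuous on $[t,T]$, hence a.s.\ confined to a compact $\mathcal{K}\subset\mathcal{H}$; Corollary \ref{ascoli} gives $\mathcal{U}_{\alpha_j}\to\mathcal{U}$ uniformly on $[0,T]\times\mathcal{K}$, so the integrand tends to $0$ a.s.\ and, being dominated by $2\overline{\Theta}$, the term vanishes by dominated convergence. The third term vanishes by Lemma \ref{tdar2}, the continuity of $\mathcal{U}$ and of the paths of $X^{t,x}$, and dominated convergence. Letting $j\to\infty$ thus yields $\mathcal{U}(t,x)=\mathbb{E}\{\mathcal{U}(\tau^\star,X^{t,x}_{\tau^\star})\}$.

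Finally, I would observe that $\mathcal{U}(\tau^\star,X^{t,x}_{\tau^\star})=\Theta(\tau^\star,X^{t,x}_{\tau^\star})$ $\mathbb{P}$-a.s.: on $\{\tau^\star<T\}$ this follows from \eqref{gnom}, the continuity of $(s,y)\mapsto\mathcal{U}(s,y)-\Theta(s,y)$ and the path continuity of $X^{t,x}$, while on $\{\tau^\star=T\}$ it holds because $\mathcal{U}(T,\cdot)=\Theta(T,\cdot)$. Therefore $\mathcal{U}(t,x)=\mathbb{E}\{\Theta(\tau^\star,X^{t,x}_{\tau^\star})\}$, and since $\tau^\star$ is an admissible stopping time in \eqref{OS1} this establishes its optimality; the same limiting argument, applied to \eqref{ualpha-dyn} for a general $\sigma$ together with the fact that Lemma \ref{tdar2} also yields $\sigma\wedge\tau^\star\wedge\tau^\star_{\alpha_j}\to\sigma\wedge\tau^\star$, gives in addition $\mathcal{U}(t,x)=\mathbb{E}\{\mathcal{U}(\sigma\wedge\tau^\star,X^{t,x}_{\sigma\wedge\tau^\star})\}$ for every stopping time $t\le\sigma\le T$. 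The main difficulty is purely technical: keeping the three limit operations (in the value function, in the diffusion, and in the stopping time) mutually compatible in the decomposition above, which is precisely why the a.s.-convergent subsequence for $X^{(\alpha_j)t,x}$ should be fixed before invoking Lemma \ref{tdar2}.
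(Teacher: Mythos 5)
Your proposal is correct and follows essentially the same route as the paper: take $\sigma=\tau^\star$ in \eqref{ualpha-dyn}, pass to the limit along the subsequence of Lemma \ref{tdar2} using Proposition \ref{conv-yos} and Corollary \ref{ascoli}, and repeat the three-term decomposition of \eqref{brad}. You merely spell out the details that the paper compresses into ``arguments as in the proof of Theorem \ref{os-t1}'', and your treatment of each term (Lipschitz bound for the first, uniform convergence on the compact range of the path for the second, a.s.\ convergence of the stopping times for the third) is exactly what that reference entails.
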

\begin{proof}
Set $\tau^\star=\tau^\star(t,x)$ for simplicity. Take $\sigma=\tau^\star$ in \eqref{ualpha-dyn} to obtain
\begin{eqnarray}\label{sprot}
\mathcal{U}_\alpha(t,x)=\mathbb{E}\left\{\mathcal{U}_\alpha(\tau^\star\wedge\tau_\alpha^\star,
X^{(\alpha)t,x}_{\tau^\star\wedge\tau_\alpha^*})\right\}.
\end{eqnarray}
Consider the subsequence $(\mathcal{U}_{\alpha_j})_{j\in\mathbb{N}}$ corresponding to the sequence $(\alpha_j)_{j\in\mathbb{N}}$ given in Lemma \ref{tdar2}, and take limits in \eqref{sprot} as $j\to\infty$. Proposition \ref{conv-yos}, Corollary \ref{ascoli} and arguments as in the proof of Theorem \ref{os-t1} allow us to conclude that \begin{eqnarray}\label{vfun-fin}
\mathcal{U}(t,x)=\mathbb{E}\left\{\mathcal{U}(\tau^\star,X^{t,x}_{\tau^\star})\right\}=
\mathbb{E}\left\{\Theta(\tau^\star,X^{t,x}_{\tau^\star})\right\}.
\end{eqnarray}
That is, $\tau^\star$ is optimal.
\end{proof}


\section{Uniqueness in a particular case}\label{uniqueness}

{We address the question of uniqueness of the solution to problem \eqref{d-2tris}
only in the case of processes $X$ whose Kolmogorov operator generates a symmetric Ornstein-Uhlenbeck semigroup (cf.~\cite{DaPr-Zab04}, Chapters 6 and 7). For instance, Chow and Menaldi \cite{Ch-Men89} consider such dynamics while carrying out an analysis similar to ours.}

In \eqref{SDE-infty} we take $\sigma(x)\equiv 1$ and repalce  $W^0$ by a $Q$-Wiener process $(W_t)_{t\in[0,T]}$ taking values in $\mathcal{H}$ (cf.~\cite{DaPr-Zab}, Chapter 4 and Remark 5.1 of Chapter 5), with covariance operator $Q\in\mathcal{L}(\mathcal{H})$ positive and of trace-class.
We make the following assumption on $A$.
\begin{ass}\label{ass:diss}
The operator $A$ is negative, self-adjoint and there esists $m>0$ such that $\langle Ax,x\rangle_{\mathcal{H}}\le -m\|x\|^2_{\mathcal{H}}$. Moreover $Tr\big[QA^{-1}\big]_{\mathcal{H}}<+\infty$ and $e^{tA}Q=Qe^{tA}$ for all $t>0$.
\end{ass}
\noindent Then the semigroup generated by the Kolmogorov operator associated to $X$ is symmetric (cf.~\cite{DaPr-Zab04}, Corollary 10.1.7), and admits a centered Gaussian invariant measure $\nu$ (cf.~\cite{DaPr-Zab04}, Proposition 10.1.1) with covariance operator $\Gamma$ defined by
\begin{align}\label{def-gamma}
\Gamma:=-\frac{1}{2}A^{-1}Q
\end{align}
(cf.~\cite{DaPr-Zab04}, Proposition 10.1.6). For $\varphi_k$ and $\lambda_k$ as in \eqref{bari} the $Q$-Wiener process may be represented as $W_t=\sum_{k}\sqrt{\lambda_k}\beta^k_t\,\varphi_k=:Q^{\frac{1}{2}}B_t$ where $\big\{\beta^k_t,\,t\ge0,\,k\in\mathbb{N}\big\}$ is an infinite sequence of independent, real, standard Brownian motions and $B_t:=\sum_{k}\beta^k_t\,\varphi_k$.
Therefore, the SDE for $X$ may be formally written as
\begin{align}\label{formalSDE}
dX_t=AX_t\,dt+Q^{\frac{1}{2}} dB_t,\quad t\in[0,T].
\end{align}

Now the variational problem may be set in the Gauss-Sobolev space associated to the measure $\nu$ rather than that associated to $Q$. All arguments developed in the previous sections may be carried out and, in particular, Theorems \ref{inf-yos-vi} and \ref{os-t2} hold with $\mathcal{V}^p$ replaced by $W^{1,2}(\mathcal{H},\nu)$, with $a_\mu(\,\cdot\,,\,\cdot\,)$ replaced by
\begin{align}\label{chow-bil}
a_{\nu}&(u,w):=\int_{\mathcal{H}}\frac{1}{2}\langle Q\,Du(x),D w(x)\rangle _\mathcal{H}\nu(dx),\quad u,w\in W^{1,2}(\mathcal{H},\nu)
\end{align}
and with $F(\,\cdot\,)(t)$ replaced by the dual pairing 
\begin{align}\label{dualF}
\langle\langle f(t),w\rangle\rangle\hspace{-2pt}:=\hspace{-2pt}\big(\frac{\partial\,\Theta}{\partial\,t}(t),w\big)_\nu
-
a_{\nu}(\Theta(t),w)
\quad\text{for $w\in W^{1,2}(\mathcal{H},\nu)$.}
\end{align}

Notice that conditions \eqref{condth01} are sufficient to guarantee the well posedness of \eqref{dualF} and
that it is no longer needed to introduce the operator $L_A$ of Section \ref{sec-yos} and its continuous extension; also, $A\Gamma A$ is not necessarily of trace class and hence the analogue of Assumption \ref{ass-Q} in this setting (i.e.\ $Tr\big[A\Gamma A \big]_{\mathcal{H}}<+\infty$), breaks down. However, here we do not need to rely on that assumption since the existence of the Gaussian invariant measure and the particular form of its covariance operator $\Gamma$ (cf.\ \eqref{def-gamma}) substantially simplify the bilinear form.

The uniqueness in $L^2(\mathcal{H},\nu)$ of the solution of the variational inequality now follows from usual comparison arguments as in \cite{Ben-Lio82} and the fact that 
\begin{align}
a_\nu(u,u)+\eta\big(u,u\big)_\nu\ge \eta\big(u,u\big)_\nu,\quad\text{for any $\eta>0$.}
\end{align}

\begin{remark}
Notice that our approach allow to give a positive answer to the open question in Remark 2, of \cite{Ch-Men89}, p.~49, under assumptions similar to those required there, although in the finite time-horizon case. Also, it solves the problem posed in Section 5 of \cite{Ch-Men89} (see discussion following Theorem 3, p.\ 51, therein) regarding the connection between infinite dimensional variational inequalities and optimal stopping problems when $\sigma$ depends on the process. We believe that our method extends to the infinite time-horizon case under quite natural integrability assumptions.
\end{remark}

\begin{remark}\label{excess}
The above arguments suggest that when a Gaussian invariant measure can be found, then uniqueness is more likely to be obtained as well. That naturally links our work to \cite{Bar-Mar08}, \cite{Bar-Sri06} and \cite{Zab01}, where variational problems associated to optimal stopping ones are solved in Sobolev spaces with respect to excessive measures (possibly invariant) of the diffusion process' semigroup.

Our proof of existence of a solution to the variational problem and its connection to the optimal stopping one could be possibly replicated when the Gaussian measure $\mu$ is replaced by an excessive measure $\nu$ (possibly invariant) provided that derivatives of $\nu$ along the basis vectors' directions exist (in the sense of \cite{Bogachev}, Definition 5.1.3) and natural integrability conditions hold, together with some refinements of Assumptions \ref{ass-sigma} and \ref{ass-Q}. Then uniqueness of the solution of the variational problem \eqref{d-2tris} would follow as shown in \cite{Bar-Mar08}, \cite{Bar-Sri06} and \cite{Zab01}.
\end{remark}


\appendix
\section*{Appendix}\label{appendix}
\renewcommand{\theequation}{A-\arabic{equation}}

\begin{proof}[Proof of Proposition \ref{unb-d}]
Fix $(t,x^{(n)})\in[0,T]\times\mathbb{R}^n$ and take $\overline{R}>0$ such that $x^{(n)}\in\mathcal{O}_{\overline{R}}$. Now for all $R\geq \overline{R}$ we have
\begin{align*}
0\leq&\hspace{+4pt}\mathcal{U}^{(n)}_{\alpha}(t,x^{(n)})-\mathcal{U}^{(n)}_{\alpha,R}(t,x^{(n)})\\
&\leq\hspace{-4pt}\sup_{t\leq\sigma\leq T}\hspace{-4pt}\mathbb{E}\left\{\hspace{-2pt}\left(\Theta^{(n)}(\sigma,X^{(\alpha)t,x;n}_{\sigma})-\Theta^{(n)}
(\tau_{R},
X^{(\alpha)t,x;n}_{\tau_{R}})\right)I_{\{
\sigma>\tau_{R}\}}\hspace{-2pt}\right\}\le 2\,\overline{\Theta}\,\mathbb{P}\big(\tau_R<T\big),
\end{align*}
by \eqref{psi1} and with $I_{\{\sigma>\tau_{R}\}}$ the indicator function of the set $\{\sigma>\tau_{R}\}$. By Markov inequality and standard estimates for strong solutions of SDEs in $\mathbb{R}^n$ (cf.~for instance \cite{Krylov} Chapter 2, Section 5, Corollary 12), it follows
\begin{align*}
\mathbb{P}\big(\tau_R<T\big)&\le \mathbb{P}\Big(\sup_{t\le s\le T}\big\|X^{(\alpha)t,x;n}_s-x^{(n)}\big\|_{\mathbb{R}^n}>R-\overline{R}\Big)\\
&\le \frac{\mathbb{E}\Big\{\sup_{t\le s\le T}\big\|X^{(\alpha)t,x;n}_s-x^{(n)}\big\|_{\mathbb{R}^n}\Big\}}{(R-\overline{R})}\le C_{n,\alpha,T}(1+\big\|x^{(n)}\big\|_{\mathbb{R}^n})\frac{(T-t)^{\frac{1}{2}}}{(R-\overline{R})}
\end{align*}
with $C_{n,\alpha,T}>0$, only depending on $(\alpha,n,T)$ and bounds on $\sigma$.

Therefore
\begin{eqnarray*}
\lim_{R\to\infty}\sup_{(t,x^{(n)})\in[0,T]\times\mathcal{K}}\big|\mathcal{U}^{(n)}_{\alpha,R}(t,x^{(n)})-\mathcal{U}^{(n)}_{\alpha}(t,x^{(n)})\big|=0
\end{eqnarray*}
for every compact subset $\mathcal{K}\subset\mathbb{R}^n$. If all $\mathcal{U}^{(n)}_{\alpha,R}$, are continuous, then $\mathcal{U}^{(n)}_{\alpha}$ is continuous on every compact subset $[0,T]\times\mathcal{K}$ and this is enough for global continuity in $\mathbb{R}^n$.
\end{proof}

\begin{proof}[Proof of Corollary \ref{str-sol-obs}]
By the regularity of $\bar{u}$ in Corollary \ref{bl-str}, it is well known that the expression \eqref{str-f} is equivalent to
\begin{eqnarray*}
\max\left\{\frac{\partial \bar{u}}{\partial t}+\mathcal{L}_{\alpha,n}\bar{u}+f_{\alpha,n}\,,\,-\bar{u}\right\}=0,
\qquad\textrm{a.e.}\,\in[0,T]\times\overline{\mathcal{O}}_R.
\end{eqnarray*}
(see for instance \cite{Ben-Lio82}, Chapter 3, Section 1, p.~191).

The regularity of $\partial\mathcal{O}_R$ and \cite{Adams}, Theorem 3.22 enable us to find a sequence $(u_j)_{j\in\mathbb{N}}$, such that $u_j\in C^\infty_c(\mathbb{R}^{n+1})$ and
\begin{eqnarray}\label{conv-val}
\|u_j-\bar{u}\|_{W^{1\,2,p}((0,T)\times\mathcal{O}_R)}\to 0\qquad\textrm{as}\:j\to\infty.
\end{eqnarray}
In fact it suffices to take a partition of the domain and use the standard mollification on each element of the partition. Then \eqref{conv-val} follows from the usual properties of the mollifiers and the fact that the operators $\partial_t$, $D$ and $D^2$ are closed in $L^p$. Moreover, the continuity of $\bar{u}$ and that of a suitable extension to $\mathbb{R}^{n+1}$ imply that the convergence is also uniform on any compact set $\mathcal{O}^\prime$ such that $[0,T]\times\overline{\mathcal{O}}_R\subset\mathcal{O}^\prime$; that is
\begin{eqnarray}\label{conv-val2}
\|u_j-\bar{u}\|_{L^\infty}\to 0,\qquad\:\textrm{as}\:j\to\infty,\:\: \textrm{on}\:\mathcal{O}^{\prime}.
\end{eqnarray}

Now we fix an arbitrary $t\in[0,T]$ and a stopping time $\tau\in[t,T]$. An application of Dynkin's formula from $t$ to $\tau\wedge\tau_R$ gives
\begin{eqnarray}\label{dyn-prg}
\mathbb{E}\left\{u_j(\tau\wedge\tau_{R},X^{(\alpha)t,x;n}_{\tau\wedge\tau_{R}})\right\}=u_j(t,x^{(n)})+\mathbb{E}
\left\{\int_t^{\tau\wedge\tau_{R}}{\left(\frac{\partial u_j}{\partial s}+\mathcal{L}_{\alpha,n}u_j\right)(s,X^{(\alpha)t,x;n}_s)ds}\right\}.
\end{eqnarray}
On the other hand by \cite{Ben-Lio82}, Chapter 2, Lemma 8.1 there exists a constant $C_{T,R}>0$ such that
\begin{align}
\hspace{-16pt}\left|\mathbb{E}\left\{\int_t^{\tau\wedge\tau_{R}}{\hspace{-5pt}\left(\frac{\partial}{\partial s}+\mathcal{L}_{\alpha,n}\right)\hspace{-3pt}\left(u_j-\bar{u}\right)(s,X^{(\alpha)t,x;n}_s)ds}\right\}\right|
\hspace{-2pt}\leq\hspace{-2pt} C_{T,R}\left\|\left(\frac{\partial}{\partial s}+\mathcal{L}_{\alpha,n}\right)\hspace{-3pt}\left(u_j-\bar{u}\right)\right\|_{L^2((0,T)\times\mathcal{O}_R)}\hspace{-4pt},
\end{align}
hence by taking the limit as $j\to\infty$ and by using (\ref{conv-val}) and (\ref{conv-val2}) we obtain
\begin{align}\label{dyn-prg-2}
\mathbb{E}\left\{\hspace{-2pt}\bar{u}(\tau\wedge\tau_{R},X^{(\alpha)t,x;n}_{\tau\wedge\tau_{R}})\hspace{-2pt}\right\}
\hspace{-2pt}=\hspace{-2pt}\bar{u}(t,x^{(n)})+\mathbb{E}
\left\{\hspace{-2pt}\int_t^{\tau\wedge\tau_{R}}{\hspace{-4pt}\left(\frac{\partial \bar{u}}{\partial s}+\mathcal{L}_{\alpha,n}\bar{u}\right)\hspace{-2pt}(s,X^{(\alpha)t,x;n}_s)ds}\hspace{-2pt}\right\}\:\textrm{for all}\:\tau\in[t,T].
\end{align}
Recall that \eqref{obs-1} holds almost everywhere in $(0,T)\times\mathcal{O}_R$ and, being the diffusion uniformly non degenerate, the law of $X^{(\alpha)t,x;n}$ is absolutely continuous with respect to the Lebesgue measure on $(0,T)\times\mathcal{O}_R$. Then
\begin{eqnarray}\label{druid1}
\bar{u}(t,x^{(n)})\geq\mathbb{E}\left\{\int_t^{\tau\wedge\tau_R}{f_{\alpha,n}(s,X^{(\alpha)t,x;n}_s)ds}\right\}\qquad\textrm{for all}\:\tau\in[t,T];
\end{eqnarray}
in particular, with $\tau^\star$ defined by
\begin{equation}\label{druid2}
\tau^\star:=\inf\{s\geq t\,:\,\bar{u}(s,X^{(\alpha)t,x;n}_s)=0\}\wedge{{\tau_R}}\wedge T,
\end{equation}
\eqref{obs-1} implies
\begin{eqnarray}\label{druid3}
\bar{u}(t,x^{(n)})=\mathbb{E}\left\{\int_t^{{{\tau^\star}}}{f_{\alpha,n}(s,X^{(\alpha)t,x;n}_s)ds}\right\}.
\end{eqnarray}
Therefore, by using \eqref{gen-inf} and by recalling \eqref{OS4} we have
\begin{align}\label{druid4}
\bar{u}(t,x^{(n)})&=\sup_{t\leq\tau\leq T}\mathbb{E}\left\{\int_t^{\tau\wedge\tau_R}{\left(\frac{\partial\Theta^{(n)}}{\partial s}+\mathcal{L}_{\alpha,n}\Theta^{(n)}\right)(s,X^{(\alpha)t,x;n}_s)ds}\right\}\nonumber\\
&=\sup_{t\leq\tau\leq T}\mathbb{E}\left\{\Theta^{(n)}(\tau\wedge\tau_R,X^{(\alpha)t,x;n}_{\tau\wedge\tau_R})\right\}-\Theta^{(n)}(t,x)
\hspace{+2pt}=\hspace{+3pt}\mathcal{U}^{(n)}_{\alpha,R}(t,x)-\Theta^{(n)}(t,x).
\end{align}
It now follows that $\bar{u}=u^{(n)}_{\alpha,R}$ and $\tau^\star=\tau^\star_{\alpha,n,R}$.

Notice that for any stopping time $\tau\leq\tau^\star_{\alpha,n,R}$, combining \eqref{druid4} and \eqref{dyn-prg-2} gives
\begin{eqnarray}\label{druid5}
\mathcal{U}^{(n)}_{\alpha,R}(t,x^{(n)})=\mathbb{E}\left\{\mathcal{U}^{(n)}_{\alpha,R}({{\tau}},X^{(\alpha)t,x;n}
_{{{\tau}}})\right\},
\end{eqnarray}
i.e. the dynamic programming principle for $\mathcal{U}^{(n)}_{\alpha,R}$ holds.
\end{proof}
\begin{proof}[Proof of Lemma \ref{bound-f}]
Set $u^R:=u^{(n)}_{\alpha,R}$ and recall Corollary \ref{str-sol-obs}. An application of It\^o's formula based on the same arguments as those that lead to \eqref{dyn-prg-2} gives
\begin{align}\label{app-boundf00}
\mathbb{E}&\left\{e^{-\int^{\tau^x_R}_t{\,\frac{1}{\varepsilon}\nu(s)ds}}\:u^R\big
(\tau^x_R,X^{(\alpha)t,x;n}_{\tau^x_R}\big)-
e^{-\int^{\tau^x_R\wedge\tau^y_R}_t{\,\frac{1}{\varepsilon}\nu(s)ds}}\:u^R\big
(\tau^x_R\wedge\tau^y_R,X^{(\alpha)t,x;n}_{\tau^x_R\wedge\tau^y_R}\big)\right\}\nonumber\\
\le& -\mathbb{E}\left\{\int^{\tau^x_R}_{\tau^x_R\wedge\tau^y_R}{
e^{-\int^s_t{\frac{1}{\varepsilon}\nu(u)du}}f_{\alpha,n}\big(s,X^{(\alpha)t,x;n}_s\big)ds}\right\}.
\end{align}
For the left-hand side of \eqref{app-boundf00} we observe that on the set $\big\{\tau^x_R\le\tau^y_R\big\}$ the difference inside the expectation is zero, whereas on the set $\big\{\tau^x_R>\tau^y_R\big\}$ one has
\begin{align}\label{app-boundf01}
u^R\big
(\tau^x_R,X^{(\alpha)t,x;n}_{\tau^x_R}\big)=0=u^R\big
(\tau^x_R\wedge\tau^y_R,X^{(\alpha)t,y;n}_{\tau^x_R\wedge\tau^y_R}\big)\qquad\text{$\mathbb{P}$-a.s.}
\end{align}
Therefore from \eqref{app-boundf00}, \eqref{reps1}, \eqref{psi2}, \eqref{lip-V} and Lemma \ref{lipSDE} we obtain
\begin{align}\label{app-boundf02}
\mathbb{E}\bigg\{\int^{\tau^x_R}_{\tau^x_R\wedge\tau^y_R}&{
e^{-\int^s_t{\frac{1}{\varepsilon}\nu(u)du}}f_{\alpha,n}\big(s,X^{(\alpha)t,x;n}_s\big)ds}\bigg\}\nonumber\\
\le&\mathbb{E}\left\{\big|u^R\big
(\tau^x_R\wedge\tau^y_R,X^{(\alpha)t,y;n}_{\tau^x_R\wedge\tau^y_R}\big)-
u^R\big
(\tau^x_R\wedge\tau^y_R,X^{(\alpha)t,x;n}_{\tau^x_R\wedge\tau^y_R}\big)\big|\right\}\\
\le& \big(L_\Theta+L_\mathcal{U}\big)C_{1,T}\big\|x^{(n)}-y^{(n)}\big\|_{\mathcal{H}}\,.\nonumber
\end{align}

To obtain \eqref{bound-f00} we need to find a similar bound for the first member of \eqref{app-boundf02} but from below. For that we introduce the auxiliary problem
\begin{align}\label{app-boundf03}
v^R(t,x^{(n)}):=\inf_{t\le\tau\le T}\mathbb{E}\left\{\int_t^{\tau\wedge\tau_R}{f_{\alpha,n}(s,X^{(\alpha)t,x;n}_s)ds}\right\}\qquad\text{for $(t,x^{(n)})\in[0,T]\times\mathbb{R}^n$}
\end{align}
and we observe that same arguments as those used to obtain Proposition \ref{bl-str} and Corollary \ref{str-sol-obs} give $v^R\in L^p(0,T;W^{1,p}_0(\mathcal{O}_R))\cap L^p(0,T;W^{2,p}(\mathcal{O}_R))$ and $\frac{\partial\,v^R}{\partial\,t}\in L^p(0,T;L^p(\mathcal{O}_R))$, for all $1\le p<+\infty$. Moreover $v^R$ uniquely solves, in the almost everywhere sense, the obstacle problem
\begin{eqnarray}\label{obs-2}
\left\{
\begin{array}{ll}
\max\left\{-\displaystyle{\frac{\partial v}{\partial t}}-\mathcal{L}_{\alpha,n}v-f_{\alpha,n}\,,\,v\right\}(t,x^{(n)})= 0,\:\:\:\: (t,x^{(n)})\in(0,T)\times\mathcal{O}_R, &\\
\\
v(t,x^{(n)})\leq0\:\:\textrm{on}\:\:[0,T]\times\overline{\mathcal{O}}_R;\:\: v(T,x^{(n)})=0,\:\:\:\: x^{(n)}\in\overline{\mathcal{O}}_R.&\\
\end{array}
\right.
\end{eqnarray}
Again, by arguing as above for \eqref{app-boundf00} and by replacing $u^R$ by $v^R$, the reversed inequality is obtained. Hence, the analogous for $v^R$ of \eqref{app-boundf01} gives
\begin{align}\label{app-boundf04}
\mathbb{E}\bigg\{\int^{\tau^x_R}_{\tau^x_R\wedge\tau^y_R}{
e^{-\int^s_t{\frac{1}{\varepsilon}\nu(u)du}}f_{\alpha,n}\big(s,X^{(\alpha)t,x;n}_s\big)ds}\bigg\}\ge -\big(L_\Theta+L_\mathcal{U}\big)C_{1,T}\big\|x^{(n)}-y^{(n)}\big\|_{\mathcal{H}}\,.
\end{align}
Now \eqref{bound-f00} follows by \eqref{app-boundf02} and \eqref{app-boundf04}.
\end{proof}

\begin{proof}[Proof of Lemma \ref{lip-ue}]
It is enough to show that $\|u^R_\varepsilon(t,x^{(n)})-u^R_\varepsilon(t,y^{(n)})\|\le L_P\|x^{(n)}-y^{(n)}\|_{\mathcal{H}}$ for all $t\in[0,T]$ and $x,y\in\mathcal{H}$. Recalling \eqref{gen-inf} and \eqref{penal01}, we find
\begin{align}\label{lip-ue01}
u^R_\varepsilon(t,x^{(n)})&-u^R_\varepsilon(t,y^{(n)})
\nonumber\\
\le&\big|\Theta^{(n)}(t,x^{(n)})-\Theta^{(n)}(t,y^{(n)})\big|\nonumber\\
&+
\sup_\nu\inf_{\nu'}\mathbb{E}\bigg\{\int^{\tau^x_R}_t{e^{-\int^s_t{\frac{1}{\varepsilon}\nu(u)du}}\frac{1}
{\varepsilon}\nu(s)\Theta^{(n)}\big(s,X^{(\alpha)t,x;n}_s\big)ds}\nonumber\\
&\hspace{+60pt}
-\int^{\tau^y_R}_t{e^{-\int^s_t{\frac{1}
{\varepsilon}\nu'(u)du}}\frac{1}
{\varepsilon}\nu'(s)\Theta^{(n)}\big(s,X^{(\alpha)t,y;n}_s\big)ds}\\
&\hspace{+100pt}+e^{-\int^{\tau^x_R}_t{\frac{1}{\varepsilon}\nu(s)}ds}
\Theta^{(n)}\big(\tau^x_R,X^{(\alpha)t,x;n}_{\tau^x_R}\big)
\nonumber\\
&\hspace{+140pt}-e^{-\int^{\tau^y_R}_t{\frac{1}{\varepsilon}\nu'(s)}ds}
\Theta^{(n)}\big(\tau^y_R,X^{(\alpha)t,y;n}_{\tau^y_R}\big)\bigg\}.\nonumber
\end{align}
From It\^o's formula, \eqref{gen-inf} and Lemma \ref{bound-f} one finds
\begin{align}\label{lip-ue02}
\mathbb{E}\bigg\{e^{-\int^{\tau^x_R}_t{\frac{1}{\varepsilon}\nu(s)}ds}\,&
\Theta^{(n)}\big(\tau^x_R,X^{(\alpha)t,x;n}_{\tau^x_R}\big)\bigg\}\\
\le&L_f\big\|x^{(n)}-y^{(n)}\big\|+\mathbb{E}\bigg\{e^{-\int^{\tau^x_R\wedge\tau^y_R}_t{\frac{1}{\varepsilon}
\nu(s)}ds}\,
\Theta^{(n)}\big(\tau^x_R\wedge\tau^y_R,X^{(\alpha)t,x;n}_{\tau^x_R\wedge\tau^y_R}\big)\bigg\}\nonumber\\
&-\mathbb{E}\bigg\{\int^{\tau^x_R}_{\tau^x_R\wedge\tau^y_R}{e^{-\int^s_t{\frac{1}{\varepsilon}\nu(u)du}}
\frac{1}
{\varepsilon}\nu(s)\Theta^{(n)}\big(s,X^{(\alpha)t,x;n}_s\big)ds}\bigg\}\nonumber
\end{align}
and similarly,
\begin{align}\label{lip-ue03}
\mathbb{E}\bigg\{e^{-\int^{\tau^y_R}_t{\frac{1}{\varepsilon}\nu'(s)}ds}\,&
\Theta^{(n)}\big(\tau^y_R,X^{(\alpha)t,y;n}_{\tau^y_R}\big)\bigg\}\\
\ge&-L_f\big\|x^{(n)}-y^{(n)}\big\|+\mathbb{E}\bigg\{e^{-\int^{\tau^x_R\wedge\tau^y_R}_t{\frac{1}{\varepsilon}
\nu'(s)}ds}
\,\Theta^{(n)}\big(\tau^x_R\wedge\tau^y_R,X^{(\alpha)t,y;n}_{\tau^x_R\wedge\tau^y_R}\big)\bigg\}\nonumber\\
&-\mathbb{E}\bigg\{\int^{\tau^y_R}_{\tau^x_R\wedge\tau^y_R}{e^{-\int^s_t{\frac{1}{\varepsilon}\nu'(u)du}}
\frac{1}
{\varepsilon}\nu'(s)\Theta^{(n)}\big(s,X^{(\alpha)t,y;n}_s\big)ds}\bigg\}.\nonumber
\end{align}
Take now
\begin{align}\label{nuprime}
\text{$\nu'(s)=\nu(s)$ for $s\in(t,\tau^x_R\wedge\tau^y_R]$\: and\: $\nu'(s)=0$ for $s>\tau^x_R\wedge\tau^y_R$,}
\end{align}
then from \eqref{lip-ue01}, \eqref{nuprime}, \eqref{lip-ue02}, \eqref{lip-ue03} and recalling \eqref{psi2} and Lemma \ref{lipSDE} we obtain
\begin{align}\label{lip-ue04}
u^R_\varepsilon(t,x^{(n)})&-u^R_\varepsilon(t,y^{(n)})
\nonumber\\
\le& \big(2L_f+L_\Theta\big)\big\|x^{(n)}-y^{(n)}\big\|_{\mathcal{H}}\nonumber\\
&+
\mathbb{E}\left\{\Big|\Theta^{(n)}\big(\tau^x_R\wedge\tau^y_R,X^{(\alpha)t,x;n}_{\tau^x_R\wedge\tau^x_R}
\big)-
\Theta^{(n)}\big(\tau^y_R\wedge\tau^x_R,X^{(\alpha)t,y;n}_{\tau^y_R\wedge\tau^x_R}\big)\Big|\right\}\\
&+
\sup_\nu\mathbb{E}\bigg\{\int^{\tau^x_R}_{\tau^x_R\wedge\tau^y_R}
{e^{-\int^s_t{\frac{1}{\varepsilon}\nu(u)du}}\frac{1}
{\varepsilon}\nu(s)\big(\Theta^{(n)}\big(s,X^{(\alpha)t,x;n}_s\big)-\Theta^{(n)}\big(s,X^{(\alpha)t,y;n}_s\big)\big)ds}
\bigg\}\nonumber\\
\le& \big(2L_f+L_\Theta+2L_\Theta C_{1,T}\big)\big\|x^{(n)}-y^{(n)}\big\|_{\mathcal{H}}.\nonumber
\end{align}
One can argue in a similar way to bound $u^R_\varepsilon(t,y^{(n)})-u^R_\varepsilon(t,x^{(n)})$.
\end{proof}

\vspace{+8pt}
\ackn{During this work the second named author was funded by the University of Rome ``La Sapienza'' through the PhD programme in \emph{Mathematics for Economic-Financial Applications} and by the EPSRC grant EP/K00557X/1}


\begin{thebibliography}{99}
\bibitem{Adams} \textsc{Adams, R.A.} (1975). \textit{Sobolev Spaces}. Academic Press, London.
\bibitem{Bar-Mar08} \textsc{Barbu, V.\ {and} Marinelli, C.} (2008). \textit{Variational Inequalities in Hilbert Spaces with Measures and Optimal Stopping Problems}. Appl.~Math.~Optim.~\textbf{57} (237-262).
\bibitem{Bar-Sri06} \textsc{Barbu, V.\ {and} Sritharan, S.S.} (2006). \textit{Optimal stopping-time problem for stochastic Navier-Stokes equations and infinite-dimensional variational inequalities}. Nonlinear Anal.~\textbf{64} (1018-1024).
\bibitem{Ben-Lio82} \textsc{Bensoussan, A.\ {and} Lions, J.L.} (1982). \textit{Applications of Variational Inequalities in Stochastic Control}. North-Holland.
\bibitem{Bogachev} \textsc{Bogachev, V.I.}~(1997). \textit{Gaussian Measures}. American Mathematical Society.
\bibitem{Brezis} \textsc{Brezis, H.}~(2010). \textit{Functional Analysis, Sobolev Spaces and Partial Differential Equations}. Universitext, Springer, New York.
\bibitem{Ch-DeA12b} \textsc{Chiarolla, M.B.~{and} De Angelis, T.}~(2015). \textit{Analytical pricing of American Put options on a Zero Coupon Bond in the Heath-Jarrow-Morton model}. Stochastic Process.~Appl.~\textbf{125} (678-707).
\bibitem{Ch-Men89} \textsc{Chow, P.L.\ {and} Menaldi, J.L.}~(1989). \textit{Variational Inequalities for the Control of Stochastic Partial Differential Equations}. Stochastic Partial Differential Equations and Applications II, Lecture Notes in Mathematics, Springer-Verlag Berlin Heidelberg (42-52).
\bibitem{DaPr} \textsc{Da Prato, G.}~(2006). \textit{An Introduction to Infinite-Dimensional Analysis}. Springer-Verlag Berlin Heidelberg.
\bibitem{DaPr-Zab} \textsc{Da Prato, G.\ {and} Zabczyk, J.}~(1992). \textit{Stochastic Equations in Infinite Dimensions}.~Cambridge University Press.
\bibitem{DaPr-Zab04} \textsc{Da Prato, G.\ {and} Zabczyk, J.}~(2004). \textit{Second Order Partial Differential Equations in Hilbert Spaces}. Cambridge University Press.
\bibitem{PhD-T} \textsc{De Angelis, T.}~(2012). \textit{Pricing American Bond Options under HJM: an Infinite Dimensional Variational Inequality}. PhD Thesis.
\bibitem{Die} \textsc{Dieudonn\'e, J.}~(1969). \textit{Foundations of Modern Analysis}. Academic Press, London.
\bibitem{ElKar81} \textsc{El Karoui, N.}~(1979). \textit{Les aspects probabilistes du contr\^ole stochastique}. 9th Saint Flour Probability Summer School, Lecture Notes in Math. \textbf{876}, Springer, Berlin-New York (73-238).
\bibitem{Fl-Son} \textsc{Fleming, W.H.~{and} Soner, H.M.}~(2006). \textit{Controlled Markov processes and viscosity solutions} Second edition. Stochastic Modelling and Applied Probability \textbf{25}, Springer, New York.
\bibitem{Gat-Sw99} \textsc{G\c{a}tarek, D.\ {and} \'Swi\c{e}ch, A.}~(1999). \textit{Optimal Stopping in Hilbert Spaces and Pricing of American Options}. Math. Methods Oper.~Res.~\textbf{50} (135-147).
\bibitem{KeSw03} \textsc{Kelome, D.\ and \'Swi\c{e}ch, A.}~(2003). \textit{Viscosity Solutions of an Infinite-Dimensional Black-Scholes-Barenblatt Equation}. Appl.~Math.~Optim.~\textbf{47} (253-278).
\bibitem{Krylov} \textsc{Krylov, N.V.}~(2009). Controlled Diffusion Processes. \emph{Springer-Verlag, Berlin Heidelberg}.
\bibitem{Lions89-1} \textsc{Lions, P.L.}~(1988). \textit{Viscosity solutions of fully nonlinear second-order equations and optimal stochastic control in infinite dimensions. I. The case of bounded stochastic evolutions}. Acta Math. \textbf{161} Vol.\ 3-4 (243-278).
\bibitem{Lions89-2} \textsc{Lions, P.L.}~(1989). \textit{Viscosity solutions of fully nonlinear second order equations and optimal stochastic control in infinite dimensions. II. Optimal control of Zakai's equation}. 
Lecture Notes in Math., 1390, Springer, Berlin (147-170).
\bibitem{Lions89-3} \textsc{Lions, P.L.}~(1989). \textit{Viscosity solutions of fully nonlinear second-order equations and optimal stochastic control in infinite dimensions. III. Uniqueness of viscosity solutions for general second-order equations}. J. Funct. Anal. \textbf{86} Vol. 1 (1-18).
\bibitem{Ma-Rock} \textsc{Ma, Z.M.\ {and} R\"ockner, M.}~(1992). \textit{Introduction to the Theory of (Non-Symmetric) Dirichlet Forms}. Springer-Verlag Berlin Heidelberg.
\bibitem{Marc08} \textsc{Marcozzi, M.D.}~(2008). \textit{On the Approximation of Infinite Dimensional Optimal Stopping Problems with Application to Mathematical Finance}. J. Sci. Computing \textbf{34} (287-307).
\bibitem{Men80} \textsc{Menaldi, J.L.}~(1980). \textit{On the Optimal Stopping Time Problem For Degenerate Diffusions}. SIAM J.~Control Optim.~\textbf{18} Vol.\ 6 (697-721).
\bibitem{Men80bis} \textsc{Menaldi, J.L.}~(1980). \textit{On Degenerate Variational and Quasi-Variational Inequalities of parabolic Type}. Analysis and Optimization of Systems, Lecture notes in Control and Information Sciences, Vol. 28 (338-356).
\bibitem{Pazy} \textsc{Pazy, A.}~(1983). \textit{Semigroups of Linear Operator and Applications to Partial Differential Equations}. Springer-Verlag, New York.
\bibitem{Shir} \textsc{Shiryaev, A.N.}~(1978). \textit{Optimal stopping rules}. Springer-Verlag Berlin Heidelberg.
\bibitem{Str-Var72} \textsc{Stroock, D. {and} Varadhan, S.R.S.}~(1972). \textit{On Degenerate Elliptic-Parabolic Operators of Second Order and Their Associated Diffusions}. Comm.~Pure and Appl.~ Math.~\textbf{25} (651-713).
\bibitem{Sw94} \textsc{\'Swi\c{e}ch, A.}~(1994). \textit{"Unbounded'' second order partial differential equations in infinite-dimensional Hilbert spaces}. Comm. Partial Differential Equations \textbf{19} Vol.\ 11-12 (1999-2036).
\bibitem{Zab84} \textsc{Zabczyk, J.}~(1984). \textit{Stopping problems in stochastic control}. Proceedings of the International Congress of Mathematicians, Vol. 1-2, PWN, Warsaw (1425-1437).
\bibitem{Zab97} \textsc{Zabczyk, J.}~(1997). \textit{Stopping Problems on Polish Spaces}. Ann. Univ. Mariae Curie-Sklodowska, \textbf{51} Vol.\ 1.18 (181-199).
\bibitem{Zab01} \textsc{Zabczyk, J.}~(2001). \textit{Bellman's Inclusions and Excessive Measures}. Probab.~Math.~Statist.~\textbf{21} Vol.\ 1 (101-122).
\end{thebibliography}
\end{document}